\newtheorem{theorem}{\bf Theorem}[section]
\newtheorem{proposition}[theorem]{\bf Proposition}
\newtheorem{lemma}[theorem]{\bf Lemma}
\newtheorem{definition}[theorem]{\bf Definition}
\newtheorem{question}[theorem]{\bf Question}
\newenvironment{proofof}[1]{\noindent{\it Proof of
#1.}}{\hfill$\square$\\\mbox{}}
\def\field{K}
\def\sepbeta{\beta_{\mathrm{sep}}}
\title{Separating versus ordinary Noether numbers} 
\author[M. Domokos]{M\'aty\'as Domokos}
\address{HUN-REN Alfr\'ed R\'enyi Institute of Mathematics,
Re\'altanoda utca 13-15, 1053 Budapest, Hungary,
ORCID iD: https://orcid.org/0000-0002-0189-8831}
\email{domokos.matyas@renyi.hu}
\author[B. Schefler]{Barna Schefler} 
\address{E\"otv\"os Lor\'and University, 
P\'azm\'any P\'eter s\'et\'any 1/C, 1117 Budapest, Hungary} 
\email{scheflerbarna@yahoo.com}
\subjclass[2020]{Primary 13A50; Secondary 13P15, 20C15}
\keywords{polynomial invariants, separating sets, degree bounds, finite groups, Noether number}
\begin{document}
\maketitle

\begin{abstract} 
Let $G$ be a finite group and $\field$ a field containing an element of multiplicative order $|G|$. 
It is shown that if $G$ has a cyclic subgroup of index at most $2$, 
then the separating Noether number over $\field$ of $G$ coincides with the Noether number 
over $\field$ of $G$. 
The same conclusion holds when $G$ is the direct product of a dihedral group and the $2$-element group. 
On the other hand, the smallest non-abelian groups $G$ are found for which the separating 
Noether number over $\field$ is strictly less than the Noether number over $\field$. Along the way 
the exact value of the separating Noether number is determined for 
all groups of order at most $16$. The results show in particular that unlike the ordinary Noether number, the separating 
Noether number of a non-abelian finite group may well be equal to the separating Noether number of a proper direct  factor of the group. 
\end{abstract}

\section{Introduction}
The separating Noether number over a field $\field$ of a finite group $G$ is 
the minimal positive integer $d$ such that for any finite dimensional $\field$-linear representation of $G$, 
any two 
dictinct $G$-orbits can be distinguished by a polynomial 
$G$-invariant of degree at most $d$.  
For a finite dimensional vector space $V$ over $\field$ we write $\field[V]$ for the 
coordinate ring of $V$ 
identified with the $n$-variable polynomial algebra 
$\field[x_1,\dots,x_n]$, where the variables $x_1,\dots,x_n$ represent the coordinate functions on $V$ with respect to a chosen basis. 
Let $G$ be a finite group, and $\rho:G\to \mathrm{GL}(V)$ a representation of $G$ on $V$. We shall say that \emph{$(V,\rho)$ is a $\field G$-module} in this case; 
mostly we shall suppress $\rho$ from the notation and we will say that 
\emph{$V$ is a $\field G$-module}. 
Sometimes we write $V_\field$ instead of $V$ when we want to make explicit the actual base field. 
The action of $G$ on $V$ induces an action on $\field[V]$ via $\field$-algebra automorphisms,
such that the variables $x_1,\dots,x_n$ span a $\field G$-submodule isomorphic to $V^*$ (the $G$-module dual to $V$), 
and $\field[V]$ is the symmetric tensor algebra of $V^*$.   
Consider 
\[\field[V]^G=\{f\in \field[V]\mid \forall g\in G:\quad g\cdot f=f\},\] 
the \emph{subalgebra of 
$G$-invariants in  $\field[V]$}. 
For $v\in V$ we shall denote by $G\cdot v$ the $G$-orbit of $v$, 
and by $\mathrm{Stab}_G(v):=\{g\in G\mid gv=v\}$ 
the \emph{stabilizer subgroup in $G$ of $v$}.

It is well known that for $v,w\in V$ with $G\cdot v\neq G\cdot w$, 
there exists an invariant $f\in \field[V]^G$ with $f(v)\neq f(w)$ (see for example \cite[Theorem 3.12.1]{derksen-kemper}, 
\cite[Theorem 16]{kemper} or \cite[Lemma 3.1]{draisma-kemper-wehlau}).  
Therefore the general notion of a "separating set of invariants"   
(see \cite[Definition 2.4.1]{derksen-kemper}) reduces to the following in the case of finite groups: 

\begin{definition}\label{def:separating set}
A subset $S$ of $\field[V]^G$ is called a \emph{separating set} if for any $v,w\in V$, 
$f(v)=f(w)$ for all $f\in S$ implies that $v$ and $w$ have the same $G$-orbit. 
\end{definition} 

The result mentioned before Definition~\ref{def:separating set} implies that every generating set of the algebra $\field[V]^G$ is a separating set. 
In the structural study of algebras of polynomial invariants of finite groups the focus on separating sets (instead of the more special generating sets) turned out to be useful in finding strengthened versions of known theorems in the invariant theory of finite groups, see for example \cite{dufresne}, 
\cite{dufresne-elmer-kohls}. Considering explicit computations of invariants, the class of modular representations for which generators of the corresponding algebra of invariants is known is very limited. On the other hand, in several such cases explicit finite separating sets are known, see for example 
\cite{sezer}, \cite{kohls-sezer}. 
The minimal possible size of a separating set is investigated in 
\cite{dufresne-jeffries}. 
In the non-modular case, an explicit separating set of multisymmetric polynomials is given in \cite{lopatin-reimers}. 
The systematic study of separating invariants over finite fields began recently in \cite{kemper-lopatin-reimers}, 
\cite{lopatin-muniz}. 

\subsection{Degree bounds} 
Endow $\field[V]$ with the standard grading, so the variables $x_1,\dots,x_n$ span the degree $1$ homogeneous component of $\field[V]$. We shall denote by $\deg(f)$ the degree of a 
non-zero homogeneous element of $\field[V]$. 
Obviously, $\field[V]^G$ is spanned by homogeneous elements, so it inherits the grading from 
$\field[V]=\bigoplus_{d=0}^\infty\field[V]_d$. We shall write $\field[V]^G_{\le d}$ for the sum of the homogeneous components of degree at most $d$. Degree bounds for generators of algebras of invariants have always been in the focus of invariant theory 
(see e.g. \cite{wehlau}). 
We set
\[\beta(G,V):=\min\{d\in \mathbb{N}_0\mid \field[V]^G_{\le d}\text{ generates the }\field\text{-algebra }\field[V]^G\},\] 
\[\beta^\field(G):=\sup\{\beta(G,V)\mid V\text{ is a finite dimensional }\field G\text{-module}\}.\] 
The quantity $\beta^\field(G)$ is called the \emph{Noether number of $G$} (over $\field$) because 
of the classical inequality $\beta^{\mathbb{C}}(G)\le |G|$ due to E. Noether \cite{noether} (the inequality was extended to the non-modular case in \cite{fleischmann}, \cite{fogarty}).  
Its systematic study was initiated in \cite{schmid}. The results of \cite{schmid} 
were improved first in \cite{domokos-hegedus}, \cite{sezer:1} 
and later in 
\cite{CzD:1}, \cite{cziszter-domokos:indextwo}. In particular, 
\cite[Table 1]{cziszter-domokos-szollosi} gives the exact value of the Noether number for all non-abelian groups of order less than $32$, and \cite{cziszter-domokos:indextwo} gives the Noether number for all groups containing a cyclic subgroup of index two. 

Following \cite[Definition 2]{kohls-kraft} (see also \cite{kemper}) we set 
\[\sepbeta(G,V):=\min\{d\in \mathbb{N}_0\mid \field[V]^G_{\le d}\text{ is a separating set}\},\] 
and 
\[\sepbeta^\field(G):=\sup\{\sepbeta(G,V)\mid V\text{ is a finite dimensional }G\text{-module}
\text{ over }\field\}.\] 
We shall refer to the above quantities as the \emph{separating Noether number of $G$ over $\field$}. 
As different orbits of a finite group can be separated by polynomial invariants, we have 
the obvious inequality 
$\sepbeta(G,V)\le \beta(G,V)$ for any $V$, 
and hence 
\begin{equation}\label{eq:sepbeta<beta}
\sepbeta^\field(G)\le \beta^\field(G).
\end{equation}
A major difference between the Noether number and its separating version is that if the characteristic of the base field  $\field$ divides the order of $G$, then $\beta^\field(G)=\infty$ by \cite{richman}, whereas $\sepbeta^\field(G)\le |G|$ also in the modular case 
(see for example the proof of \cite[Theorem 3.12.1]{derksen-kemper}, \cite[Theorem 16]{kemper} or \cite[Lemma 3.1]{draisma-kemper-wehlau}).  For some results on degree bounds on separating invariants in 
the modular case see \cite[Theorem C]{kohls-kraft}, \cite{elmer-kohls}.
However, in the present paper our focus is on the non-modular setup, when the characteristic of the base field does not divide the order of $G$. Interest in these degree bounds (over the real or complex field) 
arose also in recent applications for data science and machine learning, see 
\cite{bandeire-blumsmith-kileel-niles-weed-perry-wein}, \cite{cahill-contreras-hip}, \cite{cahill-iverson-mixon}, 
\cite{cahill-iverson-mixon-packer}. We mention also that problems in signal processing motivated the study of a variant of the Noether number for rational invariants in 
\cite{blumsmith-garcia-hidalgo-rodriguez} (see \cite{blumsmith-derksen} for a very recent result on this). 

In this non-modular context an interesting and generally open question is the following: 
\begin{question} 
To what extent is the change from "generating sets" to "separating sets" reflected in the corresponding degree bounds? 
\end{question}

It is proved in \cite[Corollary 3.11]{domokos:abelian} that for an abelian group $G$ and a field $\field$ 
containing an element of multiplicative order $\exp(G)$, typically 
$\sepbeta^\field(G)$ is strictly smaller than $\beta^\field(G)$. 
It is natural to expect similar typical behavior for non-abelian finite groups as well, 
but as far as we know, the only information in the literature concerning this question 
is in \cite{cziszter:C7rtimesC3}, where the separating Noether 
number of the non-abelian groups of order $3p$
for some prime $p$ congruent to $1$ modulo $3$ is determined. 
One of the obstacles is that the exact value of the Noether number is known only 
for a limited class of groups. This is the case already for abelian groups, whose 
Noether number equals their Davenport constant.   
However, the separating Noether number was determined recently for classes of 
abelian groups whose Noether number (Davenport constant) is not known, see 
\cite{schefler_c_n^r}, \cite{schefler-zhao-zhong}.  
This may be a hint that perhaps the separating Noether number can be computed 
also for groups whose Noether number is out of reach. As a natural start one should first 
compute the separating Noether number at least for those groups whose Noether number is known. 
This is done in \cite{schefler_rank2}, \cite{schefler-zhao-zhong} 
in the case of abelian groups,  
and we begin this project for non-abelian groups in the present paper. 
The Noether numbers of all the groups with a cyclic subgroup of index at most $2$ 
and for the remaining groups of order less than $32$
were computed in \cite{cziszter-domokos:indextwo} and 
\cite{cziszter-domokos-szollosi}, respectively. 

Here we determine the separating Noether numbers for the groups 
with a cyclic subgroup of index at most $2$ (as well as for the direct product 
of a dihedral group and the $2$-element group), 
and for all groups of order at most $16$, working  
over a base field $\field$ having an element whose multiplicative order 
equals the order of the group 
(note that it turns out that 
the smallest non-abelian groups $G$ with $\sepbeta(G)<\beta(G)$ 
have order $16$). 
Our arguments do not rely on computer calculations (see Section~\ref{sec:algorithm} 
for comments on possible use of computers and its theoretical limitations in computation of the separating Noether number for concrete groups). 
The present work is a shortened version of our preprint \cite{domokos-schefler}, whose other results (covering the remaining 
groups of order less than $32$) will be published elsewhere.

{\bf{Running assumption.}}
Throughout this paper $G$ will stand for a finite group, and 
$\field$ for a field. 
Unless explicitly stated otherwise, it is assumed  
that the characteristic of $\field$ does not divide $|G|$. 

\section{Main results}\label{sec:main results}

Our first result builds on \cite{cziszter-domokos:indextwo} (see the equality \eqref{eq:beta(index two cyclic)}) 
and shows that for a group with a cyclic subgroup of index $2$ (their classification will be recalled in 
Section~\ref{sec:indextwo})
the inequality \eqref{eq:sepbeta<beta} holds with equality, namely: 
\begin{theorem}~\label{thm:sepbeta index two} 
Let $G$ be a non-cyclic finite group with a cyclic subgroup of index $2$,  
and 
assume that $\field$ contains an element of multiplicative order $|G|$. 
Then we have the equality 
\[\sepbeta^\field(G)=\frac{1}{2} |G|  +
\begin{cases}
2 	& \text{ if } G=\mathrm{Dic}_{4m}, \text{  $m>1$};\\
1	& \text{ otherwise. }
\end{cases}\]
\end{theorem}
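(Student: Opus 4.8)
The plan is to establish both inequalities $\sepbeta^\field(G) \ge \frac12|G| + c$ and $\sepbeta^\field(G) \le \frac12|G| + c$, where $c$ is the case-dependent constant, using the known value of the ordinary Noether number as the upper bound and a carefully constructed module as the lower bound. For the upper bound $\sepbeta^\field(G) \le \beta^\field(G) = \frac12|G|+c$ we already have \eqref{eq:sepbeta<beta} together with the equality \eqref{eq:beta(index two cyclic)} from \cite{cziszter-domokos:indextwo}; the whole content of the theorem is therefore the matching lower bound, i.e.\ exhibiting for each such $G$ a finite-dimensional $\field G$-module $V$ and two points $v, w \in V$ with $G\cdot v \ne G\cdot w$ that cannot be separated by invariants of degree $< \frac12|G|+c$. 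A natural first step is to recall the classification (to be given in Section~\ref{sec:indextwo}): up to the stated base-field hypothesis the relevant groups are the dihedral groups $D_{2n}$, the semidihedral and modular maximal-cyclic groups, the dicyclic (generalized quaternion-type) groups $\mathrm{Dic}_{4m}$, and $C_n \times C_2$; one handles each family by the same strategy but with family-specific parameters.

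The core construction I would use is the following. Let $H \le G$ be the cyclic subgroup of index $2$, say $H = \langle h\rangle$ of order $n = \frac12|G|$, and fix $\zeta \in \field$ of multiplicative order $|G|$ (hence of order $n$ after squaring, giving all the characters of $H$ we need). Take $V$ to be a sum of a one-dimensional $H$-eigenspace induced up to $G$, or more precisely a direct sum of induced modules $\mathrm{Ind}_H^G(\chi)$ for suitable characters $\chi$ of $H$, chosen so that a "monomial" invariant of minimal positive degree supported on a long $H$-orbit has degree exactly $n$ (mirroring the Davenport-constant computation for $C_n$), plus one or two extra one-dimensional summands to push the degree up by the constant $c$ — this is exactly where the extra $+1$ or $+2$ comes from, and where $\mathrm{Dic}_{4m}$ behaves differently because of its distinguished central involution and the structure of its two-dimensional irreducibles. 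Then choose $v$ and $w$ to agree on a large "coordinate subspace" but to lie in different $G$-orbits in a way detectable only by a top-degree invariant. I would verify non-separation by a weight/multidegree argument: any invariant of degree $< \frac12|G|+c$ restricted to the line (or plane) through $v$ and $w$ must be constant, because every $G$-invariant monomial involving the relevant variables has multidegree forcing total degree $\ge \frac12|G|+c$, an assertion reducible to a statement about zero-sum sequences over $H \cong C_n$ together with the action of the order-two coset.

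To control which monomials can occur in invariants I would use the standard device of restricting to the abelian subgroup $H$: an element of $\field[V]^G$ is in particular $H$-invariant, and $\field[V]^H$ has a transparent monomial basis indexed by $H$-weight-zero exponent vectors, so the degree of the lowest-degree "new" invariant is governed by the Davenport constant $\mathsf{D}(C_n) = n$ of $C_n$; the action of the non-trivial coset of $H$ then either identifies or obstructs candidate low-degree invariants, and one must check that it does not produce an invariant of degree below the claimed bound while still separating $v$ from $w$. The main obstacle I anticipate is precisely this last verification for the dicyclic case: here the two-dimensional irreducible $\field G$-modules are faithful and the order-two coset acts by an anti-diagonal involution squaring to the central element $-1$, so a naive weight count gives $\frac12|G|+1$ and one has to work harder — analyzing invariants on $\mathrm{Ind}_H^G(\chi) \oplus (\text{sign-type module})$ and showing that the would-be degree-$(\frac12|G|+1)$ separating invariant does not exist — to obtain the sharp $\frac12|G|+2$. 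A secondary technical point is ensuring the chosen $v, w$ genuinely have distinct $G$-orbits (checking the two cosets separately) and that the ambient module can be taken of bounded dimension so the supremum in the definition of $\sepbeta^\field(G)$ is actually attained by the construction.
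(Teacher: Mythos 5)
Your plan is essentially the paper's own proof: the upper bound is exactly $\sepbeta^\field(G)\le\beta^\field(G)$ combined with the formula from \cite{cziszter-domokos:indextwo}, and the lower bound is obtained, family by family in the classification, from a small explicit module (a two-dimensional module induced from a character of the index-two cyclic subgroup $A$, plus at most one linear character) with two points agreeing outside the last coordinate, where non-separation in lower degree is checked by the multihomogeneity/weight argument you describe, reducing to the fact that the smallest $A$-invariant power of $x_1$ has degree $\tfrac12|G|$. The only cosmetic difference is that for $\mathrm{Dic}_{4m}$ itself the paper needs no extra one-dimensional summand: the $+2$ comes directly from the degree-$(2m+2)$ generator $(x_1^{2m}\mp x_2^{2m})x_1x_2$ of $\field[x_1,x_2]^{\mathrm{Dic}_{4m}}$, the lower-degree generators already agreeing on the chosen pair.
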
 

Using \cite[Corollary 5.5]{cziszter-domokos:indextwo} 
we are able to determine the separating Noether number for 
another infinite sequence of non-abelian finite groups: 

\begin{theorem}\label{thm:sepbeta(D2nxC2)}
For $n\ge 2$ even consider the direct product $G:=\mathrm{D}_{2n}\times \mathrm{C}_2$ of the 
dihedral group of order $2n$ and the cyclic group $\mathrm{C}_2$ of order $2$. 
Assume that $\field$ has an element with multiplicative order $n$. Then 
we have 
\[\sepbeta^\field(\mathrm{D}_{2n}\times \mathrm{C}_2)=n+2.\]  
\end{theorem}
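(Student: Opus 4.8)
The plan is to prove $\sepbeta^\field(G)\le n+2$ and $\sepbeta^\field(G)\ge n+2$ separately, where $G:=\mathrm{D}_{2n}\times\mathrm{C}_2$. The upper bound is immediate: \cite[Corollary 5.5]{cziszter-domokos:indextwo} gives $\beta^\field(G)=n+2$ for even $n\ge 2$ (for $n=2$ this is the Davenport constant of $G\cong\mathrm{C}_2^{3}$), and \eqref{eq:sepbeta<beta} then yields $\sepbeta^\field(G)\le\beta^\field(G)=n+2$. The whole content is in the lower bound, and for it I would exhibit one $\field G$-module $V$ together with two points of $V$ in distinct $G$-orbits that agree on every invariant of degree at most $n+1$.

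Write $\mathrm{D}_{2n}=\langle a,b\mid a^n=b^2=1,\ bab^{-1}=a^{-1}\rangle$, $\mathrm{C}_2=\langle c\rangle$, and fix $\omega\in\field$ of multiplicative order $n$. Let $W_1$ be the two-dimensional $\field G$-module with coordinates $x,y$ on which $a$ acts by $\mathrm{diag}(\omega,\omega^{-1})$, $b$ by $(x,y)\mapsto(y,x)$, and $c$ trivially; let $\field_\chi$ be the one-dimensional $\field G$-module with coordinate $z$ on which $a$ acts trivially and $b$ and $c$ by $-1$; and let $\field_\psi$ be the one-dimensional $\field G$-module with coordinate $t$ on which $a,b$ act trivially and $c$ by $-1$. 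Put $V:=W_1\oplus\field_\chi\oplus\field_\psi$. Since the $a$-weights of $x,y,z,t$ are $1,-1,0,0$ and $b$ swaps $x,y$ and negates $z$,
\[\field[V]^{\mathrm{D}_{2n}}=\field[P,Q,R,T]\ \oplus\ U\cdot\field[P,Q,R,T],\qquad U^2=R(Q^2-4P^n),\]
with $P=xy$, $Q=x^n+y^n$, $R=z^2$, $T=t$, $U=z(x^n-y^n)$ of degrees $2,n,2,1,n+1$; and since $c$ fixes $P,Q,R$ and negates $T,U$, imposing $c$-invariance gives
\[\field[V]^{G}=\field[P,Q,R,T^2,\,TU],\qquad (TU)^2=T^2R(Q^2-4P^n).\]
The monomial $TU=tz(x^n-y^n)$ has odd degree in each of $z$ and $t$, hence is not a polynomial in $P,Q,R,T^2$; so $TU$ is a minimal generator, of degree $n+2$, and the subalgebra generated by $\field[V]^G_{\le n+1}$ is $\field[P,Q,R,T^2]$.

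Now take $v:=(1,0,1,1)$ and $w:=(1,0,1,-1)$ in the coordinates $x,y,z,t$. At both points $P=0$, $Q=1$, $R=1$, $T^2=1$, so every invariant of degree at most $n+1$, being a polynomial in these, takes equal values at $v$ and $w$. However $TU(v)=1$ and $TU(w)=-1$, which differ because $\operatorname{char}\field\ne 2$; as $TU$ is an invariant, $v$ and $w$ lie in distinct $G$-orbits. (Alternatively: the $G$-stabilizer of $(1,0)\in W_1$ is $\{1\}\times\mathrm{C}_2$, acting on $(z,t)$ only through $(z,t)\mapsto(-z,-t)$, so the orbit of $v$ meets $\{(1,0,\ast,\ast)\}$ in $\{(1,0,1,1),(1,0,-1,-1)\}$, which omits $w$.) Hence $\field[V]^G_{\le n+1}$ is not separating, $\sepbeta(G,V)\ge n+2$, and therefore $\sepbeta^\field(G)\ge n+2$. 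This construction is valid for every even $n\ge 2$ (for $n=2$ the module $W_1$ is reducible, which is harmless).

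The crux is the lower bound, specifically the design of $V$: the modules that most obviously realise $\beta^\field(G)=n+2$ tend to be separated already in degree $n+1$, since the sign ambiguities there are ``global'' and are undone by the group; the module above works precisely because in its $G$-action the signs of $z$ and $t$ are coupled, so $(1,0,1,1)$ and $(1,0,1,-1)$ really lie in different orbits --- ones invisible to the low-degree invariants $z^2,\,t^2,\,xy,\,x^n+y^n$ but distinguished by $tz(x^n-y^n)$. The rest --- the invariant-ring computation and the appeal to \cite[Corollary 5.5]{cziszter-domokos:indextwo} --- is routine.
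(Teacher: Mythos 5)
Your proposal is correct and follows essentially the same route as the paper: the upper bound comes from \cite[Corollary 5.5]{cziszter-domokos:indextwo} (plus Olson's value $\mathsf{D}(\mathrm{C}_n\times\mathrm{C}_2)=n+1$), and the lower bound uses exactly the same module $W_1\oplus U_{\chi}\oplus U_{\psi}$, the same pair of points $([1,0]^T,1,\pm1)$, and the same separating invariant $(x^n-y^n)zt$. The only difference is cosmetic: you verify that degree $\le n+1$ invariants cannot separate by writing down the full presentation $\field[V]^G=\field[P,Q,R,T^2,TU]$, whereas the paper argues via multihomogeneity and the fact that the minimal relative $\mathrm{D}_{2n}$-invariant of weight $b\mapsto-1$ has degree $n$; both are sound.
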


There are $16$ non-abelian groups of order 
less than $32$ not covered by Theorem~\ref{thm:sepbeta index two} or 
Theorem~\ref{thm:sepbeta(D2nxC2)}. We discuss them  
in the following theorem: 

\begin{theorem}\label{thm:sepbeta(<32)}  
Let $G$ be a group of order at most $16$
that does not contain a cyclic subgroup of index at most $2$, and $G$ is not isomorphic to 
$\mathrm{D}_{2n}\times \mathrm{C}_2$. 
Assume that $\field$ contains an element of multiplicative order $|G|$. 
The value of $\sepbeta^\field(G)$ is given in the 
following table:  
\[ 
\begin{array}{c|c|c|c|c}
\mathrm{GAP} &G   & \beta^\field(G) & \sepbeta^\field(G) &\text{reference for }\sepbeta^\field(G) \\ \hline 
 (8,5)  &  \mathrm{C}_2\times \mathrm{C}_2\times \mathrm{C}_2 & 4 & 4 & \text{\cite[Theorem 3.10]{domokos:abelian}} 
 \\   
 (9,2)  &\mathrm{C}_3\times \mathrm{C}_3 & 5 &  4 & 
 \text{\cite[Theorem 1.2]{schefler_c_n^r}}
 \\ 
(12,3) &\mathrm{A}_4  & 6 & 6 & 
\mathrm{Theorem~\ref{thm:sepbeta(A4)}} \\
(16,2) & \mathrm{C}_4\times \mathrm{C}_4 & 7 & 6 & \text{\cite[Theorem 1.2]{schefler_c_n^r}}
  \\
(16,3) & (\mathrm{C}_2\times \mathrm{C}_2) \rtimes \mathrm{C}_4 =  (\mathrm{C}_4 \times \mathrm{C}_2) \rtimes_{\psi} \mathrm{C}_2  & 6 & 6 & 
\mathrm{Theorem~\ref{thm:sepbeta((C2xC2)rtimesC4)}}\\
(16,4) & \mathrm{C}_4 \rtimes \mathrm{C}_4 & 7 & 6 & 
\mathrm{Theorem~\ref{thm:sepbeta(C4rtimesC4)}}\\
(16,10) & \mathrm{C}_2\times \mathrm{C}_2\times \mathrm{C}_4 & 6 & 6 & \text{\cite[Theorem 3.10]{domokos:abelian}} \\
(16,12) & \mathrm{Dic}_8 \times \mathrm{C}_2  & 7 & 6 & 
\mathrm{Theorem~\ref{thm:sepbeta(Dic8xC2)}}\\
(16,13) & (Pauli) \; = \; (\mathrm{C}_4 \times \mathrm{C}_2) \rtimes_{\phi} \mathrm{C}_2   & 7  & 7 & 
\mathrm{Theorem~\ref{thm:sepbeta(Pauli)}}\\
 (16,14)  & \mathrm{C}_2\times \mathrm{C}_2\times \mathrm{C}_2\times \mathrm{C}_2 & 5 & 5 & \text{\cite[Theorem 3.10]{domokos:abelian}}
 \\
\hline
\end{array}
\] 
\end{theorem}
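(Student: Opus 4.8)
The theorem is a table of exact values for the separating Noether number of ten specific groups of order at most $16$. Several rows are cited from the literature (the elementary abelian and mixed abelian $2$-groups from \cite{domokos:abelian}, and the rank-$2$ abelian groups $\mathrm{C}_3^2$, $\mathrm{C}_4^2$ from \cite{schefler_c_n^r}), so the plan is only to prove the five rows attributed to theorems of this paper: $\mathrm{A}_4$, $(\mathrm{C}_2\times\mathrm{C}_2)\rtimes\mathrm{C}_4$, $\mathrm{C}_4\rtimes\mathrm{C}_4$, $\mathrm{Dic}_8\times\mathrm{C}_2$, and the Pauli group. For each such group the strategy splits into an upper bound and a matching lower bound. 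The lower bound is easy when $\sepbeta^\field(G)=\beta^\field(G)$ (rows $\mathrm{A}_4$, $(16,3)$, $(16,13)$): one reuses the extremal module exhibiting $\beta(G,V)=\beta^\field(G)$ from \cite{cziszter-domokos-szollosi}, and checks directly that the invariant of top degree in that example is actually needed to separate two specific orbits, so no separating set of smaller degree exists. For the rows where $\sepbeta^\field(G)<\beta^\field(G)$ (rows $(16,4)$ and $(16,12)$), the lower bound $\sepbeta^\field(G)\ge 6$ must be produced by hand: one constructs an explicit module $V$ and a pair of points $v,w$ in distinct orbits whose $G$-invariants of degree $\le 5$ all agree, typically by building $V$ from a well-chosen collection of one- and two-dimensional irreducibles and writing down $v,w$ so that the low-degree invariants (which one can enumerate using the multigrading by the characters appearing in $V$) fail to separate them.

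The upper bound is the substantive part. The uniform tool is the multigrading: decompose $V$ into isotypic (or, after a further choice, irreducible) components, so that $\field[V]^G$ inherits a grading by a monoid of "weights", and a homogeneous invariant of degree $d$ decomposes into pieces each supported on at most $d$ of the summands. One then argues that to separate any two orbits it suffices to use invariants supported on a bounded number of summands, reducing $\sepbeta^\field(G)$ to a computation on modules $V$ that are multiplicity-free, or have small bounded multiplicities, among the finitely many irreducibles of $G$. For each of the five groups one enumerates the irreducible $\field G$-modules (all of dimension $1$ or $2$ for these groups), and for each candidate "small" module $V$ one must show every pair of distinct orbits is separated in degree $\le 6$ (resp. $\le 7$ for Pauli). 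Here the key reductions from \cite{cziszter-domokos:indextwo} are invoked: since each of these groups contains an abelian (often cyclic or $\mathrm{C}_4\times\mathrm{C}_2$) subgroup $H$ of index $2$, one can exploit the transfer/relative-invariant structure $\field[V]^G=(\field[V]^H)^{G/H}$ and bound separating degrees for $G$ in terms of separating degrees for the abelian group $H$ on $V$ together with one extra "twisted" generator coming from the nontrivial coset, mirroring the inequality $\beta^\field(G)\le\beta^\field(H)+$ (small constant) used in the ordinary case.

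The main obstacle I expect is the case analysis for the modules $V$ that involve a $2$-dimensional irreducible with multiplicity $\ge 2$, or a mix of a $2$-dimensional irreducible with several $1$-dimensional characters: these are exactly the configurations where the ordinary Noether number is attained, and one must verify that a separating set of the asserted degree still exists even though a generating set of that degree may not. Concretely, for $\mathrm{C}_4\rtimes\mathrm{C}_4$ and $\mathrm{Dic}_8\times\mathrm{C}_2$ one has to rule out that the degree-$7$ generator is genuinely needed for separation — i.e.\ show that on every module the degree-$7$ invariant can be replaced, for the purpose of distinguishing orbits, by polynomial combinations of invariants of degree $\le 6$ evaluated at the point. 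This typically comes down to a finite but delicate computation: parametrize a pair of points that agree on all degree-$\le 6$ invariants, restrict to the index-$2$ abelian subgroup where everything is explicit (using the known separating sets for $\mathrm{C}_4\times\mathrm{C}_4$ and $\mathrm{C}_4\times\mathrm{C}_2$ from \cite{schefler_c_n^r}), and deduce that the two points already lie in the same $G$-orbit. For the Pauli group the same scheme gives the bound $7$, and there the lower bound shows this is sharp, so no such replacement is possible — consistent with $\sepbeta(\mathrm{Pauli})=\beta(\mathrm{Pauli})$.
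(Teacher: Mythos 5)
Your overall architecture --- lower bounds from explicit modules and pairs of points, upper bounds by reducing to finitely many ``small'' modules and then doing explicit invariant-theoretic computations --- matches the paper's, and your treatment of the three rows with $\sepbeta^\field(G)=\beta^\field(G)$ is essentially what the paper does (for the two rows with $\sepbeta^\field(G)<\beta^\field(G)$ the paper gets the lower bound $\ge 6$ more cheaply than you propose, via monotonicity: $\mathrm{Dic}_8$ is a direct factor of $\mathrm{Dic}_8\times\mathrm{C}_2$ and a quotient of $\mathrm{C}_4\rtimes\mathrm{C}_4$, and $\sepbeta^\field(\mathrm{Dic}_8)=6$ by Theorem~\ref{thm:sepbeta index two}). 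However, there is a genuine gap in your reduction step for the upper bounds. The claim that, because a degree-$d$ invariant is multihomogeneous and hence ``supported on at most $d$ of the summands,'' it suffices to separate orbits on modules with a bounded number of irreducible summands, does not follow from the multigrading alone: even if every pair of projections onto small sub-sums lies in a common orbit, the full points need not, and no degree count resolves this. The paper needs two specific tools here: the polarization theorem of \cite{draisma-kemper-wehlau} (Lemma~\ref{lemma:multfree}) to reduce to multiplicity-free modules, and the Helly-dimension argument (Lemma~\ref{lemma:helly}), which is a combinatorial statement about intersections of cosets of stabilizers and is what actually glues the orbit statements on sub-sums into an orbit statement on the whole module. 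For $\mathrm{C}_4\rtimes\mathrm{C}_4$ the bound $\kappa(G)\le 3$ (Proposition~\ref{prop:mu(C4rtimesC4)}) is what cuts the case analysis down to sums of three irreducibles; without it your enumeration is not finite in any controlled way.

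The second questionable point is your proposed upper-bound mechanism via an index-two abelian subgroup $H$ and the decomposition $\field[V]^G=(\field[V]^H)^{G/H}$, ``mirroring'' the inequality $\beta^\field(G)\le\beta^\field(H)+c$. No such relative bound for \emph{separating} invariants is established (or used) in the paper, and it is not clear it holds: two points in distinct $G$-orbits may already lie in the same $H$-orbit, so separating degrees for $H$ give no direct leverage. What the paper actually does for the critical modules (e.g.\ $W_1\oplus W_2\oplus U_\chi$ for $\mathrm{C}_4\rtimes\mathrm{C}_4$, or $W_1\oplus W_2\oplus U$ for $\mathrm{Dic}_8\times\mathrm{C}_2$) is first fix the $(W_1,W_2)$-components using $\beta(G,W_1\oplus W_2)\le 6$ (a parity argument plus the known value $\beta^\field(G)=7$), and then exhibit, for each character $\chi$, low-degree relative invariants of weight $\chi^{-1}$ on $W_1\oplus W_2$ that do not vanish simultaneously at the given point; multiplying by $t_\chi$ yields degree~$\le 5$ or $\le 6$ invariants that pin down the remaining one-dimensional coordinates. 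Automorphisms of $G$ (Lemma~\ref{lemma:auto}) are then used to avoid repeating this for every weight. Your outline would need to be completed with these (or equivalent) ingredients before it constitutes a proof.
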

The first column in the above table contains the identifier of $G$ in the Small Groups Library of GAP (see \cite{GAP4}); that is, a pair of the form \texttt{(order, i)}, where the GAP command $\mathsf{SmallGroup}(\texttt{order,i})$  returns the $\texttt{i}$-th group of order $\texttt{order}$ in the catalogue. The second column gives $G$ in a standard notation or indicates its structure as a direct product or semidirect product 
(the symbol $\rtimes$ always stands for a semidirect product that is not a direct product, 
and $\psi$ and $\phi$ in the rows $(16,3)$ and $(16,13)$ stand for two different involutive 
automorphisms of $\mathrm{C}_4\times \mathrm{C}_2$). The presentation of 
$G$ in terms of generators and relations can be found in the beginning of the section 
that contains the theorem referred to in the last column for the value of $\sepbeta^\field(G)$. 
The third column of the table 
contains $\beta^\field(G)$; these values are taken from 
\cite{cziszter-domokos-szollosi}. The fourth column contains 
$\sepbeta^\field(G)$.  

Note that in  some cases the value for $\sepbeta^\field(G)$ given in the table 
holds under weaker restrictions on $\field$ than the restrictions stated in 
Theorem~\ref{thm:sepbeta(<32)}. 
The exact conditions on $\field$ needed 
for the validity of our arguments are contained in the statement 
quoted in the last column of the table.  

\subsection{Comments on the base field}\label{sec:dependence on base field}
We would like to stress that the separating Noether number is more sensitive 
for the choice of the base field than the Noether number. 
It is shown in \cite[Corollary 4.2]{knop} that the Noether number of a finite group 
depends only on the characteristic of the base field, so 
\[\beta^\field(G)=\begin{cases} \beta^{\mathbb{F}_p}(G) \text{ if } \mathrm{char}(\field)=p>0; 
\\ \beta^\mathbb{C}(G)  \text{ if } \mathrm{char}(\field)=0. 
\end{cases}\]
Moreover,  by \cite[Theorem 4.7]{knop}, for all but finitely many primes $p$ we have 
$\beta^{\mathbb{F}_p}(G)\ge \beta^\mathbb{C}(G)$. 

On the other hand, the behavior of the separating Noether number in this respect is very different. 
For example, it is proved in \cite{domokos:rational} that for any prime order group 
$\mathrm{C}_p$ we have $\sepbeta^\mathbb{Q}(\mathrm{C}_p)=3$ (except for 
$p=2$, when $\sepbeta^\mathbb{Q}(\mathrm{C}_2)=2$), 
whereas it is well known and easy to see that $\sepbeta^{\mathbb{C}}(\mathrm{C}_p)=p$ 
(and also $\sepbeta^{\mathbb{R}}(\mathrm{C}_p)=p$ by 
\cite[Proposition 3.13]{blumsmith-garcia-hidalgo-rodriguez}).   

However, let us record some general facts relating the dependence 
of the separating Noether number on the base field. 
 
\begin{lemma}\label{lemma:spanning invariants} 
Let $F$ be a subfield of a field $L$, and let $V_F$ be an $F G$-module.  
\begin{itemize}
\item[(i)] For any non-negative integer $d$, the $L$-vector space   
$L[V_L]^G_d$ is spanned by its subset $F[V_F]^G_d$. 
\item[(ii)] We have the inequality 
$\sepbeta(G,V_F)\le \sepbeta(G,V_L)$. 
\item[(iii)] We have the inequality 
$\sepbeta^F(G)\le \sepbeta^L(G)$. 
\end{itemize} 
\end{lemma}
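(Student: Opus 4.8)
The plan is to prove part~(i) using the Reynolds (averaging) operator, which under our running assumption $\mathrm{char}(F)\nmid|G|$ is available already over the small field $F$, and then to deduce parts~(ii) and~(iii) by purely formal manipulations. Here $V_L$ denotes $L\otimes_F V_F$. First I would fix an $F$-basis $x_1,\dots,x_n$ of $V_F^*$; it is simultaneously an $L$-basis of $V_L^*$, so the degree $d$ monomials in $x_1,\dots,x_n$ form an $F$-basis of $F[V_F]_d$ and an $L$-basis of $L[V_L]_d$, giving a canonical identification $L[V_L]_d=L\otimes_F F[V_F]_d$ of $LG$-modules, with $G$ acting through the second tensor factor. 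Since $|G|$ is invertible in $F$, the operator $\mathcal{R}:=\frac{1}{|G|}\sum_{g\in G}g$ is an $F$-linear $G$-module projection of $F[V_F]_d$ onto $F[V_F]^G_d$, and its scalar extension $\mathrm{id}_L\otimes\mathcal{R}$ is a projection of $L[V_L]_d$ onto $L\otimes_F F[V_F]^G_d$. On the other hand $\mathrm{id}_L\otimes\mathcal{R}$ coincides with the Reynolds operator computed over $L$, whose image is $L[V_L]^G_d$. Comparing images yields $L[V_L]^G_d=L\otimes_F F[V_F]^G_d$, which is exactly the statement that $F[V_F]^G_d$ spans $L[V_L]^G_d$ over $L$; this proves~(i).

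For~(ii), I would set $d:=\sepbeta(G,V_L)$ and take $v,w\in V_F$ with $f(v)=f(w)$ for every $f\in F[V_F]^G_{\le d}$. By~(i) each $h\in L[V_L]^G_{\le d}$ is an $L$-linear combination of elements of $F[V_F]^G_{\le d}$, hence also satisfies $h(v)=h(w)$; since $L[V_L]^G_{\le d}$ is a separating set for $V_L$, the points $v$ and $w$ lie in one $G$-orbit in $V_L$, so $gv=w$ in $V_L$ for some $g\in G$. Because the $G$-action on $V_L$ restricts to that on $V_F$, the vector $gv$ already lies in $V_F$, and injectivity of $V_F\hookrightarrow V_L$ forces $gv=w$ in $V_F$. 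Thus $F[V_F]^G_{\le d}$ separates the $G$-orbits in $V_F$, giving $\sepbeta(G,V_F)\le d$.

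Part~(iii) then follows by taking suprema: for every $FG$-module $V_F$ we have $\sepbeta(G,V_F)\le\sepbeta(G,V_L)\le\sepbeta^L(G)$ by~(ii), so $\sepbeta^F(G)\le\sepbeta^L(G)$. I do not expect a genuine obstacle in this lemma; the only point that needs a moment's care is that orbit membership for a pair of vectors lying in $V_F$ is insensitive to whether it is tested in $V_F$ or in $V_L$, which is secured by $G$-equivariance together with the injectivity of the base-change map. The one essential ingredient is the non-modular hypothesis, used precisely to make the Reynolds operator available over $F$ and thereby to force the equality (rather than merely an inclusion) in~(i).
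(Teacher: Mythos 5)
Your proposal is correct. Parts (ii) and (iii) match the paper's argument in substance: the paper phrases (ii) contrapositively (given two points of $V_F$ in distinct orbits, a separating invariant over $L$ of degree at most $\sepbeta(G,V_L)$ exists, and (i) replaces it by one over $F$), while you argue directly and explicitly record the — trivially true — fact that orbit membership for points of $V_F$ can be tested equally in $V_F$ or in $V_L$; both routes rely on exactly the same content. The only genuine difference is in part (i). The paper invokes the linear-algebra fact that the $L$-solution space of a homogeneous linear system with coefficients in $F$ is spanned by its $F$-rational solutions (applied to the system $g\cdot f=f$, $g\in G$, whose matrices over the monomial basis have entries in $F$); this argument is characteristic-free. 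You instead use the Reynolds operator $\frac{1}{|G|}\sum_{g\in G}g$ and the compatibility of its image with scalar extension, which yields the sharper identification $L[V_L]^G_d=L\otimes_F F[V_F]^G_d$ but needs $|G|$ invertible in $F$. Under the paper's running assumption this costs nothing, and your computation is correct; the paper's version has the minor advantage of remaining valid in the modular case, which is consistent with its later remark that separating invariants still work when $\mathrm{char}(\field)$ divides $|G|$.
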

\begin{proof} 
(i) It is well known, and follows from the fact that the space of solutions over $L$ of 
a system of homogeneous linear equations with coefficients in $F$ is spanned by the 
subset of solutions over $F$. 

(ii) Take any $v,v'\in V_F\subseteq  V_L$ with $G\cdot v\neq G\cdot v'$. Then there exists an 
$f\in L[V_L]^G_d$ with $d\le \sepbeta(G,V_L)$ such that $f(v)\neq f(v')$. 
It follows by (i) that there exists an $h\in F[V]^G_d$ with $h(v)\neq h(v')$. 
This clearly shows the desired inequality. 

(iii) is an immediate consequence of (ii). 
\end{proof}

\begin{proposition}\label{prop:abelian-field-dependence} 
Let $G$ be a finite abelian group. 
Then we have 
\begin{align}\label{eq:sepbetaK<sepbetaC}
\sepbeta^\field(G)\le \sepbeta^{\mathbb{C}}(G), 
\text{ with equality when }
 \field 
\text{ contains an element } 
\\ \notag \text{whose multiplicative order 
equals the exponent of }G. 
\end{align} 
 \end{proposition}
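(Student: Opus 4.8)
The plan is to reduce everything to the structure of one-dimensional representations. For a finite abelian group $G$ and a field $\field$ containing an element of multiplicative order $\exp(G)$, every irreducible $\field G$-module is one-dimensional, so every $\field G$-module $V$ decomposes as a direct sum of one-dimensional modules, on each of which $G$ acts through a character $\chi_i : G \to \field^\times$. Under this extra hypothesis on $\field$, I expect that $\sepbeta(G,V)$ depends only on the multiset of characters $\{\chi_1,\dots,\chi_n\}$, viewed as elements of the character group $\widehat{G} \cong G$, and not on the field: the polynomial algebra $\field[V]$ is spanned by monomials, each monomial $x^a = \prod x_i^{a_i}$ is a $\chi^a$-semiinvariant where $\chi^a = \prod \chi_i^{a_i}$, and the invariants are spanned by those monomials with $\chi^a$ trivial. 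Separation of orbits then becomes a purely combinatorial condition about which monomials of bounded degree suffice, exactly as in the zero-sum / Davenport-constant circle of ideas; in particular this combinatorial quantity is the same whether we work over $\field$ or over $\mathbb{C}$. This already gives the equality in \eqref{eq:sepbetaK<sepbetaC} under the stated hypothesis, since $\mathbb{C}$ also contains an element of multiplicative order $\exp(G)$ and realizes the same collection of characters.

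For the inequality $\sepbeta^\field(G) \le \sepbeta^{\mathbb{C}}(G)$ without any hypothesis on $\field$, the plan is to invoke Lemma~\ref{lemma:spanning invariants}. If $\mathrm{char}(\field)=0$ then $\field$ embeds in an algebraically closed field of characteristic $0$, which (being a subfield or extension appropriately) relates to $\mathbb{C}$; more precisely, one uses Lemma~\ref{lemma:spanning invariants}(iii) together with the fact that for abelian groups in characteristic $0$ the separating Noether number does not increase when passing to $\mathbb{C}$, because over $\overline{\field}$ every $\field G$-module becomes a sum of characters and the first part of the argument applies. If $\mathrm{char}(\field)=p>0$ (with $p\nmid |G|$ by the running assumption), then one compares $\field$ with $\mathbb{F}_p(\zeta)$ for a primitive $\exp(G)$-th root of unity $\zeta$; over $\mathbb{F}_p(\zeta)$ the first part of the argument gives the value equal to $\sepbeta^{\mathbb{C}}(G)$ (the combinatorial quantity attached to $\widehat{G}$), and Lemma~\ref{lemma:spanning invariants}(iii) applied to the relevant inclusions sandwiches $\sepbeta^\field(G)$ between the value over $\mathbb{F}_p$ and the value over $\mathbb{F}_p(\zeta)$, both of which are at most $\sepbeta^{\mathbb{C}}(G)$. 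The point is that the combinatorial upper bound coming from the character-group description is field-independent and always applies as an upper bound, even when $\field$ lacks enough roots of unity; only the matching lower bound requires the roots of unity.

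Concretely, the steps I would carry out are: (1) fix the hypothesis that $\field$ contains an element of order $\exp(G)$ and prove that $\sepbeta(G,V)$ equals a combinatorial invariant $c(\{\chi_1,\dots,\chi_n\})$ of the associated sequence in $\widehat{G}$, by analyzing monomial semiinvariants and the condition for two points to lie in the same orbit (two points $v,w$ of $V=\bigoplus \field_{\chi_i}$ lie in the same $G$-orbit iff there is $g$ with $\chi_i(g) w_i = v_i$ for all $i$; a set of monomial invariants separates iff it "detects" all such coincidences); (2) observe that $\mathbb{C}$ satisfies the hypothesis and realizes the same characters, so $\sup_V c$ is the same, giving the equality case; (3) for the general inequality, handle $\mathrm{char}=0$ and $\mathrm{char}=p$ separately using Lemma~\ref{lemma:spanning invariants}(iii) and the cyclotomic extensions $\mathbb{F}_p \subseteq \mathbb{F}_p(\zeta)$, respectively $\field \subseteq \overline{\field}$. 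The main obstacle I anticipate is step (1): making precise and correct the claim that separating sets of invariants can be taken to consist of monomials and that the minimal degree needed is genuinely field-independent (for upper bounds) — one must be careful that a separating set over $\field$ need not a priori be built from monomials, so the argument should instead show that the span of low-degree monomial invariants already separates, i.e. produce an explicit field-independent separating family of monomials of degree $\le \sepbeta^{\mathbb{C}}(G)$, which is where the Davenport-constant-type combinatorics enters and must be quoted or reproved.
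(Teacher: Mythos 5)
Your proposal follows essentially the same route as the paper: the key field-independent combinatorial characterization of $\sepbeta^\field(G)$ for fields containing an element of multiplicative order $\exp(G)$, which you propose to establish in your step (1) via monomial semiinvariants and zero-sum combinatorics, is exactly the content of \cite[Corollary 2.6]{domokos:abelian}, which the paper simply quotes (noting that its proof works verbatim without algebraic closedness); this gives the equality case and $\sepbeta^{\overline{\field}}(G)=\sepbeta^{\mathbb{C}}(G)$, after which Lemma~\ref{lemma:spanning invariants}(iii) applied to $\field\subseteq\overline{\field}$ yields the general inequality, just as in your step (3). The only point to fix is your characteristic-$p$ ``sandwich'' between $\mathbb{F}_p$ and $\mathbb{F}_p(\zeta)$: an arbitrary $\field$ of characteristic $p$ need not be comparable to $\mathbb{F}_p(\zeta)$, so one should instead use the extension $\field\subseteq\field(\zeta)$ (or uniformly $\field\subseteq\overline{\field}$, as the paper does, which also removes the need to treat the two characteristics separately).
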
 

\begin{proof} 
In \cite[Corollary 2.6]{domokos:abelian}, a characterization purely in terms of 
$G$ (and not referring to $\field$) is given for $\sepbeta^\field(G)$, valid when 
$\field$ contains an element 
whose multiplicative order equals the exponent of $G$ (in fact \cite{domokos:abelian} 
assumes that the base field is algebraically closed, but the proof of the quoted result 
works verbatim under the present weaker assumption on $\field$). 
So $\sepbeta^\field(G)=\sepbeta^{\mathbb{C}}(G)$ when 
$\field$ contains an element of multiplicative order $|G|$. 
In particular, for any $\field$ we have $\sepbeta^{\overline{\field}}(G)=\sepbeta^\mathbb{C}(G)$, 
where $\overline{\field}$ is the algebraic closure  
of $\field$ (recall the running assumption that $|G|$ 
is invertible in $\field$). 
Note finally that by 
Lemma~\ref{lemma:spanning invariants} (iii) we have 
$\sepbeta^\field(G)\le \sepbeta^{\overline{\field}}(G)$.  
\end{proof}

\subsection{Failure of strict monotonicity}

For a normal subgroup $N$ of $G$, 
all representations of $G/N$ can be viewed as a representation of $G$ whose kernel contains $N$. 
Therefore we have the obvious inequalities  
\begin{equation}\label{eq:sepbeta(G/N)}
\beta^\field(G)\ge \beta^\field(G/N) 
\text{ and }\sepbeta^\field(G)\ge \sepbeta^\field(G/N).
\end{equation}
Moreover,  both the Noether number and the separating Noether number are monotone for subgroups as well; 
that is, 
for any subgroup $H$ of a finite group $G$ we have 
\begin{equation}\label{eq:beta(H)}
\beta^\field(G)\ge \beta^\field(H) 
\mbox{ (see \cite{schmid}) and }
\sepbeta^\field(G)\ge \sepbeta^\field(H) 
\mbox{ (see  \cite[Theorem B]{kohls-kraft}).} 
\end{equation} 
The first inequality \eqref{eq:beta(H)} was sharpened in 
\cite[Theorem 1.2]{cziszter-domokos:lower bound}, 
where the following strict monotonicity of the Noether number was proved: 

\begin{align}\label{eq:strict monotonicity} 
    \beta^\field(G)>\beta^\field(H) \text{ for any subgroup }H\neq G; 
    \\ \notag 
    \beta^\field(G)>\beta^\field(G/N) \text{ for any normal subgroup }N\neq \{1_G\} \text{ of }G. 
    \end{align}

The results of Theorem~\ref{thm:sepbeta index two}
and 
Theorem~\ref{thm:sepbeta(<32)} 
show that the analogues of \eqref{eq:strict monotonicity} (i.e. strict monotonicity) does not hold for the separating Noether number.  
Indeed, for $\field$ as in the cited theorems, we have 
\[\sepbeta^\field(\mathrm{Dic}_8)=6
=\sepbeta^\field(\mathrm{Dic}_8\times \mathrm{C}_2),\]
and $\mathrm{Dic}_8\times \mathrm{C}_2$ has a proper direct factor isomorphic 
to $\mathrm{Dic}_8$. 
Similar examples exist among the abelian groups. 
For example, we have 
\[\sepbeta^\field(\mathrm{C}_6\oplus \mathrm{C}_6\oplus \mathrm{C}_6)=12=
\sepbeta^\field(\mathrm{C}_6\oplus \mathrm{C}_6\oplus \mathrm{C}_3)\] 
by \cite[Theorem 6.1]{schefler_c_n^r}.

\subsection{Comments on the methods used} 
We make an essential use of the 
known results on exact values of the Noether numbers 
(as summarized in \cite{cziszter-domokos-szollosi}) of  
the groups considered here. In particular, in each  
case when the Noether number and the separating Noether number 
happen to coincide, we just need to come up with a representation and 
two points with distinct orbits that can not be separated by 
invariants of strictly smaller degree than the Noether number. 
In all such cases it turns out that rather small 
dimensional representations suffice to provide such examples. 
On the other hand, to settle the cases when the separating 
Noether number is strictly smaller than the Noether number requires  
elaborate work: 
we need a thorough analysis of 
concrete representations, discussions of stabilizers, 
group automorphisms, 
construction of 
invariants and relative invariants, as well as  
several ad hoc ideas that help to undertand the set of solutions of systems of polynomial equations. What makes these computations 
feasible is that by general principles (see the notion of Helly dimension 
and Lemma~\ref{lemma:helly} below) it is sufficient to deal with representations having a small 
number of irreducible summands. 

\subsection{Organization of the paper} 
In Section~\ref{sec:prel} we introduce notation and 
collect general facts used throughout the paper. 
We turn in Section~\ref{sec:indextwo} to the proof of 
Theorem~\ref{thm:sepbeta index two} and Theorem~\ref{thm:sepbeta(D2nxC2)}. 
In Section~\ref{sec:easy groups} we deal with the remaining 
groups of order less than or equal to $16$ 
for which the separating Noether number equals 
the Noether number, namely with the alternating group $A_4$, the non-abelian 
semi-direct product $(\mathrm{C}_2\times\mathrm{C}_2)\rtimes \mathrm{C_4}$, 
and the Pauli group. In Section~\ref{sec:HxC2} we discuss the smallest non-abelian groups 
whose separating Noether number is strictly smaller than the Noether number, namely 
$\mathrm{Dic}_8\times\mathrm{C}_2$ and $\mathrm{C}_4\rtimes \mathrm{C}_4$. 
In Section~\ref{sec:algorithm} we sketch an algorithm (implemented in SageMath) that can 
compute the separating Noether number of  a small group $G$ over the algebraic closure 
of a chosen base field $\field$ (and hence provides an 
upper bound for the separating Noether number $\sepbeta^\field(G)$). 

\section{Preliminaries} \label{sec:prel}

\subsection{Notational convention.} \label{subsec:convention} 
Denote by $\widehat G$ the group of characters of $G$ (i.e. the group of homomorphisms $G\to \field^\times$, written multiplicatively). 
For $\chi\in \widehat G$ denote by $U_\chi$ the $1$-dimensional 
vector space $\field$ endowed with the  representation $\chi$; 
that is, 
$g\cdot \lambda=\chi(g)\lambda$ for $g\in G$ and $\lambda\in \field$.  
In several cases we shall deal with a $\field G$-module \begin{equation}\label{eq:V+U} 
V=W\oplus U, \qquad W=W_1\oplus\cdots\oplus W_l, \qquad U= U_{\chi_1} \oplus \cdots \oplus U_{\chi_m},
\end{equation}   
where the $W_i$ are pairwise non-isomorphic irreducible $\field G$-modules of dimension at least $2$, the $\chi_1,\dots,\chi_m$ are distinct characters of $G$. We shall denote by $x_1,x_2,\dots$ the coordinate functions on $W_1$, by 
$y_1,y_2,\dots$ the coordinate functions on $W_2$, $z_1,z_2,\dots$ the coordinate functions on $W_3$, etc., 
and denote by $t_\chi$ the coordinate function on $U_\chi$. 
Usually we shall specify a concrete representation of $G$ (in other words, a $\field G$-module) 
by giving a group homomorphism $\psi:G\to \mathrm{GL}_n(\field)$. 
The underlying vector space is then the space of column vectors $V=\field^n$, on which $g\in G$ operates via multiplication 
by the matrix $\psi(g)$. 
The action of $G$ on the variables $x_1,\dots,x_n$ (which form a basis in the dual space $V^*$ of $V$)
is given explicitly by the following formula:  
\[g\cdot x_j=\sum_{i=1}^n\psi(g^{-1})_{ji}x_i,\] 
whereas we have $g\cdot t_\chi=\chi(g^{-1})t_\chi$. 
Associated to the above direct sum decomposition of $V$ is 
an $\mathbb{N}_0^{l+m}$-grading on $\field[V]$: this corresponds to the action 
of the torus $(\field^\times)^{l+m}$ given by 
$(\lambda_1,\dots,\lambda_{l+m})\cdot (w_1,\dots,w_l,u_1,\dots,u_m)=
(\lambda_1 w_1,\dots, \lambda_{l+m}u_m)$. 
The phrase `multihomogeneous' will refer to this multigrading. 
Since the above torus action commutes with the $G$-action on $V$, 
the algebra $\field[V]^G$ is spanned by multihomogenous elements. 
In particular, for $v,v'\in V$ and a positive integer $d$, we have that 
\begin{align}\label{eq:multihomogeneous separating}
v,v'\mbox{ can be separated by a homogeneous invariant of degree }d
\\ \notag \iff v,v'\mbox{ can be separated by a multihomogeneous invariant of degree }d. 
\end{align}

We shall write $A^T$ for the transpose of the matrix (e.g. row vector) $A$. 

\subsection{Relative invariants} \label{sec:Davenport} 

An element $f\in \field[V]$ is a \emph{relative invariant of weight $\chi\in \widehat G$} if 
$g\cdot f=\chi(g^{-1})f$ for all $g\in G$ 
(so $f(gv)=\chi(g)f(v)$ for all $g\in G$ and $v\in V$). We set 
\[\field[V]^{G,\chi}:=\{f\in \field[V]\mid f \text{ is a relative invariant of weight }\chi\}.\] 
For example, if $V$ is as in \eqref{eq:V+U}, then 
the variable $t_\chi$ is a relative invariant of weight $\chi$. 
Any multihomogeneous element in $\field[V]$ is of the form 
$ft$, where $f\in \field[W]$ and $t=\prod_{j=1}^mt_{\chi_j}^{n_j}$. 
Note that $t$ is a relative invariant of weight $\prod_{j=1}^m\chi_j^{n_j}$. 
So $ft$ is a $G$-invariant if and only if $f$ is a relative invariant whose weight is the inverse 
of the weight of $t$. 

\section{Groups with a cyclic subgroup of index two}\label{sec:indextwo} 

Denote by $\mathrm{C}_n$ the cyclic group of order $n$. 
We use the notation for semidirect products 
\[ \mathrm{C}_m \rtimes_d \mathrm{C}_n = \langle a,b \mid  a^m=1, b^n=1, bab^{-1}=a^d \rangle  \quad \text{ where } d \in \mathbb{N}\mbox{ is coprime to }m. \] 
We need the following classification of finite groups with a cyclic subgroup of index $2$ 
(see \cite[Section 10]{cziszter-domokos:indextwo} for details and references):

\begin{proposition}\label{prop:class index 2 cyclic} 
Any finite group containing a cyclic subgroup of index two is isomorphic to
\begin{equation}\label{eq:H}
\mathrm{C}_s\times (\mathrm{C}_r\rtimes_{-1} H)\end{equation} 
where $r,s$ are coprime odd integers, 
$\mathrm{C}_r\rtimes_{-1} H$ is the semidirect product where the kernel of 
the action of $H$ on $\mathrm{C}_r$ contains an index two cyclic subgroup $\langle a  \rangle$ 
of $H$, 
$b\in H\setminus \langle a \rangle$ acts via inversion on $\mathrm{C}_r$ 
(i.e. $bxb^{-1}=x^{-1}$ for all $x\in \mathrm{C}_r$), and 
$H$ is one of the following $2$-groups: 
\begin{itemize}
\item[(i)] $\mathrm{C}_{2^n}$ \quad ($n\geq 1$);
\item[(ii)] $\mathrm{C}_{2^{n-1}}\times \mathrm{C}_2$ \quad  ($n\geq 2$); 
\item[(iii)] $\mathrm{M}_{2^n} := \mathrm{C}_{2^{n-1}} \rtimes_d  \mathrm{C}_2, \qquad d={2^{n-2}+1}$ \quad ($n\geq 3$);
\item[(iv)] $\mathrm{D}_{2^n} 	:= \mathrm{C}_{2^{n-1}} \rtimes_{-1} \mathrm{C}_2$ \quad  ($n\geq 4$);  
\item[(v)] $\mathrm{SD}_{2^n} := \mathrm{C}_{2^{n-1}} \rtimes_d \mathrm{C}_2, \qquad d={2^{n-2}-1}$ \quad ($n\geq 4$);  
\item[(vi)] $\mathrm{Dic}_{2^n} := \langle a,b\mid a^{2^{n-1}}=1, b^2=a^{2^{n-2}}, bab^{-1}= a^{-1}\rangle$ \quad ($n\geq 3$).  
\end{itemize}
\end{proposition}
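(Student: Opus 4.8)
The plan is to reduce, by peeling off the odd-order pieces one at a time, to the classical classification of the $2$-groups that contain a cyclic subgroup of index two, and then to invoke that classification.

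First I would set things up. A subgroup of index two is normal, so $C$ is normal in $G$; fix a generator $c$ of $C$, put $m=|C|$, and pick $b\in G\setminus C$. Then $G=\langle c,b\rangle$, $b^2\in C$, and conjugation by $b$ sends $c$ to $c^{d}$ for some integer $d$ with $d^2\equiv 1\pmod m$ (because $b^2$ lies in the abelian group $C$ and hence centralises it). Write $m=2^{k}m'$ with $m'$ odd. Since $(\mathbb{Z}/p^{a}\mathbb{Z})^\times$ is cyclic for every odd prime power $p^{a}$, the congruence $d^2\equiv 1\pmod{m'}$ forces $d\equiv\pm 1$ modulo each prime power dividing $m'$; grouping the prime powers on which $d\equiv -1$ into $r$ and those on which $d\equiv 1$ into $s$, the Chinese Remainder Theorem yields an internal decomposition $C=\mathrm{C}_{2^{k}}\times\mathrm{C}_{r}\times\mathrm{C}_{s}$ with $r,s$ coprime odd integers, on which $b$ acts by inversion on $\mathrm{C}_{r}$, trivially on $\mathrm{C}_{s}$, and by some $d_{2}$ with $d_{2}^{2}\equiv 1\pmod{2^{k}}$ on $\mathrm{C}_{2^{k}}=\langle u\rangle$.

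Next I would normalise $b$ and split off $\mathrm{C}_{s}$. The $\mathrm{C}_{r}$-component of $b^2$ is automatically trivial, since $b$ commutes with $b^2$ yet inverts $\mathrm{C}_{r}$, whose order is odd; and the $\mathrm{C}_{s}$-component of $b^2$ can be removed by replacing $b$ with $bw^{t}$ for a suitable $t$, where $w$ generates $\mathrm{C}_{s}$ (this uses that $b$ centralises $\mathrm{C}_{s}$ and that $2$ is invertible modulo the odd number $s$, and it does not alter the action of $b$ on $C$). After this, $b^2\in\langle u\rangle$, and $\mathrm{C}_{s}=\langle w\rangle$ is central in $G$. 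Writing $v$ for a generator of $\mathrm{C}_{r}$, the subgroup $K:=\langle u,v,b\rangle$ has order $2^{k+1}r$, coprime to $s$; hence $K\cap\mathrm{C}_{s}=\{1\}$, $K\mathrm{C}_{s}=G$, and $K$ is normal because $\mathrm{C}_{s}$ is central, so $G=\mathrm{C}_{s}\times K$. Inside $K$ the subgroup $\langle v\rangle\cong\mathrm{C}_{r}$ is normal, $H:=\langle u,b\rangle$ is a $2$-group of order $2^{k+1}$ meeting $\langle v\rangle$ trivially, and $\langle u\rangle$ is a cyclic index-two subgroup of $H$ contained in the kernel of the action of $H$ on $\mathrm{C}_{r}$, while $b\in H\setminus\langle u\rangle$ acts by inversion. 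Therefore $K\cong\mathrm{C}_{r}\rtimes_{-1}H$ and $G\cong\mathrm{C}_{s}\times(\mathrm{C}_{r}\rtimes_{-1}H)$, which is exactly the asserted form once we know $H$ is one of (i)--(vi).

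The remaining and hardest step is the classification of a $2$-group $H$ of order $2^{n}$ admitting a cyclic subgroup $\langle u\rangle$ of index two. With $b\in H\setminus\langle u\rangle$, $bub^{-1}=u^{d}$ and $b^2=u^{i}$, one has $d^2\equiv 1\pmod{2^{n-1}}$, so for $n\ge 4$ that $d\in\{1,-1,2^{n-2}-1,2^{n-2}+1\}$ (fewer options for small $n$), while $b(b^{2})b^{-1}=b^{2}$ constrains $i$. A finite case analysis over the pairs $(d,i)$, keeping track of which changes of representative $b\mapsto bu^{t}$ identify two cases, leaves precisely $\mathrm{C}_{2^{n}}$, $\mathrm{C}_{2^{n-1}}\times\mathrm{C}_{2}$, $\mathrm{M}_{2^{n}}$, $\mathrm{D}_{2^{n}}$, $\mathrm{SD}_{2^{n}}$ and $\mathrm{Dic}_{2^{n}}$, with the ranges of $n$ recorded in the proposition (the small cases --- e.g.\ $\mathrm{M}_{8}\cong\mathrm{D}_{8}$ and $\mathrm{SD}_{8}\cong\mathrm{C}_{4}\times\mathrm{C}_{2}$ --- must be handled separately, which is why the lower bounds on $n$ differ between the families). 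Since all of this is classical, one may also simply quote \cite[Section~10]{cziszter-domokos:indextwo} and the references there. I expect this last case analysis --- and in particular keeping straight which choices of $b$ give isomorphic groups --- to be the only genuinely delicate part of the argument.
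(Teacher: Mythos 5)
The paper does not prove this proposition at all: it is quoted as a known classification, with the reader referred to \cite[Section 10]{cziszter-domokos:indextwo} for details and references. Your proposal therefore does strictly more than the paper, and the part you actually carry out is correct. Conjugation by $b$ preserves each primary component of the cyclic normal subgroup $C$ (they are characteristic), and acts by $\pm1$ on each odd primary component because $\mathrm{Aut}(\mathrm{C}_{p^a})$ is cyclic for odd $p$; this gives the $b$-invariant splitting $C=\mathrm{C}_{2^k}\times\mathrm{C}_r\times\mathrm{C}_s$ with $r,s$ coprime and odd. Your two normalizations both check out: the $\mathrm{C}_r$-component of $b^2$ vanishes since $b$ fixes $b^2$ but inverts the odd-order group $\mathrm{C}_r$, and the $\mathrm{C}_s$-component is absorbed by $b\mapsto bw^t$ because $2$ is invertible modulo $s$ and $w$ is central, so the action is unchanged. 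The counting argument $|K|=2^{k+1}r$, $K\cap\mathrm{C}_s=\{1\}$, $K\triangleleft G$ then yields $G=\mathrm{C}_s\times K$ with $K\cong\mathrm{C}_r\rtimes_{-1}H$, $H=\langle u,b\rangle$, exactly as required (note that every element of $H\setminus\langle u\rangle$ is $u^ib$ and hence also inverts $\mathrm{C}_r$, so the action really is the one denoted $\rtimes_{-1}$). The one step you leave as a sketch --- the case analysis over $(d,i)$ with $d^2\equiv 1\pmod{2^{n-1}}$, $b^2=u^i$, $2^{n-1}\mid i(d-1)$, classifying the $2$-groups with a cyclic maximal subgroup into families (i)--(vi) and sorting out the low-order coincidences such as $\mathrm{M}_8\cong\mathrm{D}_8$ --- is precisely the classical part, and deferring it to the cited reference is no worse than what the paper itself does for the entire proposition. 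In short: your route is a legitimate self-contained reduction where the paper simply cites, and I see no gap in the portion you argue.
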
  

Among the groups listed in Proposition~\ref{prop:class index 2 cyclic} are the dicyclic groups. 
For a positive integer $m>1$, the \emph{dicyclic group} is  
\[\mathrm{Dic}_{4m}=\begin{cases} \mathrm{C}_r\rtimes_{-1} \mathrm{Dic}_{2^n} &\text{ for  }m=2^{n-2}r \text{ even }\\
\mathrm{C}_m\rtimes_{-1} \mathrm{C}_4 &\text{ for  }m>1 \text{ odd. }\end{cases}\] 

Note that $\mathrm{Dic}_8$ is the \emph{quaternion group} of order $8$. 

 \begin{proposition}\label{prop:betasep-index2} 
Let $G$ be a non-cyclic group with a cyclic subgroup of index two. 
Assume that 
$\field$ has an element of multiplicative order $|G|$. 
Then
\[ \sepbeta^\field(G) \ge \frac{1}{2} |G|  +
\begin{cases}
2 	& \text{ if } G=\mathrm{Dic}_{4m}, \text{  $m>1$};\\
1	& \text{ otherwise. }
\end{cases}\]
\end{proposition}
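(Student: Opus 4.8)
The plan is to exhibit, for each group $G$ in the list, a concrete $\field G$-module $V$ together with two points $v,v'\in V$ lying in distinct $G$-orbits that cannot be separated by any homogeneous invariant of degree strictly less than the claimed lower bound $\frac12|G|+\epsilon$ (with $\epsilon=2$ for dicyclic groups and $\epsilon=1$ otherwise). Since $\sepbeta^\field(G)\ge\sepbeta(G,V)$ by definition, and $\sepbeta(G,V)$ is the largest degree forced by \emph{some} pair of non-separated points, it suffices to produce one such obstinate pair. First I would write $G=\mathrm{C}_s\times(\mathrm{C}_r\rtimes_{-1}H)$ as in Proposition~\ref{prop:class index 2 cyclic} and, using the multiplicativity-type reductions available for direct products of groups with coprime orders, reduce to handling $H$ itself (the $\mathrm{C}_s$ and $\mathrm{C}_r$ factors only inflate $|G|$ by the coprime factor $rs$, and the bound scales accordingly; one tensors the test module with a faithful character of $\mathrm{C}_{rs}$). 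So the heart of the matter is the six $2$-groups (i)--(vi) of Proposition~\ref{prop:class index 2 cyclic}, where $|H|=2^n$.

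For the test representation I would take $V = U_{\chi}\oplus W$, where $W$ is a (typically $2$-dimensional, in cases (iii)--(vi)) faithful irreducible $\field H$-module containing the cyclic subgroup $\langle a\rangle$ of index $2$ acting by a primitive $2^{n-1}$-th root of unity, and $U_\chi$ is a well-chosen linear character; this mirrors the module used in \cite{cziszter-domokos:indextwo} to compute $\beta^\field(G)$ via the equality \eqref{eq:beta(index two cyclic)}. The key computation is to analyze the multigraded algebra $\field[V]^H=\bigoplus_d \field[W]^{H,\chi^{-d}}\,t_\chi^d$ of relative invariants (as set up in Section~\ref{sec:Davenport}): an invariant of multidegree $(e,d)$ in $(W,U_\chi)$ is $f t_\chi^d$ with $f\in\field[W]^{H,\chi^{-d}}$ of degree $e$. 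I would pick $v=(1,w_0)$ and $v'=(\zeta,w_0)$ for a suitable root of unity $\zeta$ and a point $w_0\in W$ with a large stabilizer, so that any separating invariant must involve $t_\chi$ to a positive power $d$; the stabilizer condition on $w_0$ then forces $\deg(f)$ to be large because the lowest-degree nonzero relative invariant in $\field[W]^{H,\chi^{-d}}$ not vanishing at $w_0$ has degree at least $2^{n-1}-d$ (roughly, the relevant Davenport-type constant for the cyclic part), giving total degree $\ge 2^{n-1}-d+d$ plus a correction. Fine-tuning $\chi$, $\zeta$ and $w_0$ produces exactly the claimed value; the dicyclic case carries the extra $+1$ because $b^2=a^{2^{n-2}}$ is central and nontrivial, which kills the candidate relative invariant $t_\chi^d$ of small degree $d$ near $\frac12|H|$ (the weight $\chi$ does not extend compatibly), forcing one more step up in degree.

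The main obstacle is the case-by-case verification that the constructed pair is genuinely non-separated: one must show that \emph{every} multihomogeneous invariant of degree below the threshold takes equal values on $v$ and $v'$. Concretely this amounts to, for each multidegree $(e,d)$ with $e+d$ below the bound, either exhibiting that $\field[W]^{H,\chi^{-d}}_e=0$, or showing every element of it vanishes at $w_0$ (hence at both $v,v'$ after multiplying by $t_\chi^d$), or that it depends on $w_0$ alone and so agrees on $v,v'$. This requires knowing the graded dimensions of the spaces of relative invariants $\field[W]^{H,\psi}$ for the $2$-dimensional faithful irreducibles $W$ — essentially a Molien-series computation for each of the families $\mathrm{M}_{2^n},\mathrm{D}_{2^n},\mathrm{SD}_{2^n},\mathrm{Dic}_{2^n}$ — together with identifying the lowest-degree generators. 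For the cyclic and $\mathrm{C}_{2^{n-1}}\times\mathrm{C}_2$ cases (i),(ii) the group is abelian and one can instead invoke \cite[Corollary 2.6]{domokos:abelian}, so the work is genuinely concentrated in the four non-abelian dihedral-type families, where the structure of $\field[W]^H$ is classically known (it is a polynomial ring or a hypersurface) and the relative-invariant modules are free of rank one over it; pinning down the degree of the generator of the weight-$\chi^{-d}$ module as a function of $d$ is the crux, and it is exactly this function that yields the $\frac12|G|+1$ versus $\frac12|G|+2$ dichotomy.
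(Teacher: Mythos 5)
Your overall plan (a small test module of the form ``two-dimensional summand plus a character'', an analysis of relative invariants, and a degree obstruction coming from the index-two cyclic subgroup) is the right one, and it is essentially what the paper does. But two of your steps have genuine gaps. First, the proposed reduction from $G=\mathrm{C}_s\times(\mathrm{C}_r\rtimes_{-1}H)$ to the $2$-group $H$ does not work: $\mathrm{C}_r$ is \emph{not} a direct factor (it is inverted by $b\in H$), and even for the honest direct factor $\mathrm{C}_s$ there is no multiplicativity principle that converts $\sepbeta^\field(H)\ge \tfrac12|H|+\epsilon$ into $\sepbeta^\field(G)\ge \tfrac12|G|+\epsilon$ --- note in particular that the additive constant $\epsilon$ does not ``scale''. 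The paper avoids any such reduction: it writes down a $3$-dimensional representation of the \emph{whole} group $G$ in which the index-two cyclic subgroup $A\le G$, of order $m=\tfrac12|G|=rs2^{n-1}$, acts on the coordinate $x_1$ through a character of order exactly $m$; then any multihomogeneous invariant $t^d f_1(x_1,x_2)$ separating $v=[1,0,1]^T$ from $v'=[1,0,-1]^T$ must contain an $A$-invariant monomial in $x_1$ alone, and the only such monomials are powers of $x_1^m$, forcing degree $>m$. This one-step argument is exactly what replaces your ``Davenport-type constant for the cyclic part, plus a correction''.

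Second, your explanation of the extra $+1$ for $\mathrm{Dic}_{4m}$ (``the weight $\chi$ does not extend compatibly'') is not an argument and does not fit your own framework: for the dicyclic groups the paper uses \emph{no} character summand at all. It takes the faithful $2$-dimensional representation, whose invariant ring is generated by $(x_1x_2)^2$, $x_1^{2m}\pm x_2^{2m}$ and $(x_1^{2m}\mp x_2^{2m})x_1x_2$ (degrees $4$, $2m$, $2m+2$), and exhibits a pair of points (e.g.\ $[1,1]^T$ and $[1,-1]^T$ for $m$ odd, $[\nu,1]^T$ and $[\nu,-1]^T$ with $\nu$ of order $4m$ for $m$ even) on which only the degree-$(2m+2)$ generator differs; the central element $b^2=-I$ forces all invariants to have even degree, which is where the jump from $2m+1$ to $2m+2$ really comes from. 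Relatedly, $\mathrm{C}_s\times\mathrm{Dic}_{4m}$ with $s>1$ is \emph{not} a dicyclic group and receives only the $+1$ bound (the paper treats it by yet another representation with a character summand); your ``scaling'' reduction would misclassify exactly these groups. Finally, you defer the crux (degrees of generators of the relative-invariant modules) to an unspecified Molien-series computation; that computation is avoidable, but as written your proposal leaves the quantitative heart of the proposition unproved.
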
 

\begin{proof} 
Let $G$ be a group as in \eqref{eq:H} in Proposition~\ref{prop:class index 2 cyclic}, 
where $H$ is a semidirect product of a cyclic group and the two-element group (so $H$ is $\mathrm{C}_2$ or $H$ is of type (ii), (iii), (iv), or (v) from Proposition~\ref{prop:betasep-index2}). 
Then $G$ has a matrix representation generated by the matrices 
\[\   \begin{bmatrix} 
      0 & 1 & 0 \\
      1 & 0 & 0 \\
      0 & 0 & -1
   \end{bmatrix}, \quad 
   \begin{bmatrix} 
      \xi & 0 & 0 \\
      0 & \xi^k & 0 \\
      0 & 0 & 1
   \end{bmatrix}, \quad 
   \begin{bmatrix} 
      \rho & 0 & 0 \\
      0 & \rho^{-1} & 0 \\
      0 & 0 & 1
   \end{bmatrix}, \quad 
   \begin{bmatrix} 
      \varepsilon & 0 & 0 \\
      0 & \varepsilon & 0 \\
      0 & 0 & 1
   \end{bmatrix}, 
       \]
where $k$ is some positive integer coprime to $2^{n-1}$ depending on the type of $H$, and  $\xi$, $\rho$, $\varepsilon$ are roots of $1$ of multiplicative order $2^{n-1}$, 
$r$, $s$. Consider the corresponding algebra $\field[x_1,x_2,t]^G$, where $x_1,x_2,t$ are the coordinate functions on the vector space $\field^3$, on which the group $G$ acts via  the given matrix representation. Setting $m:=sr2^{n-1}$, the monomials $x_1^m$ and $x_2^m$ 
are fixed by the cyclic index two subgroup $A$ of $G$ generated by the second, third, and fourth matrices above. Moreover, they are interchanged by the first matrix. 
It follows that $(x_1^m-x_2^m)t$ is a $G$-invariant. This degree $m+1$ invariant takes different values on the vectors $v:=[1,0,1]^T$ and $v':=[1,0,-1]^T$ in $\field^3$. 

Note that our representation is the direct sum of a $2$-dimensional and a $1$-dimensional representation. 
So by \eqref{eq:multihomogeneous separating} it is sufficient to show that 
 $v$ and $v'$ can not be separated by a \emph{multihomogeneous} invariant  of degree at most $m$. 
Suppose for contradiction that $f(v)\neq f(v')$, where $f$ is a multihomogeneous invariant of degree at most $m$. 
So $f=t^df_1(x_1,x_2)$.  
Since the projections of these two vectors in the $x_1,x_2$ coordinate plane are the same, 
no invariant depending only on $x_1$ and $x_2$ can separate them. Thus we have $d>0$. 
The invariants depending only on $t$ are the polynomials in $t^2$, which take the same value (namely $1$) on these vectors, hence $\deg(f_1)>0$. Since $t$ is $A$-invariant, 
$f_1(x_1,x_2)\in \field[x_1,x_2]^A$, so $f_1$ is 
a $\field$-linear combination of $A$-invariant monomials in $x_1$ and $x_2$. 
The monomials having positive degree in $x_2$ vanish on both $v$ and $v'$. 
It follows that 
$f_1$ must have an $A$-invariant monomial depending only on $x_1$. 
The only such monomials are the powers of $x_1^m$. Thus $\deg(f)\ge m+d>m$, 
a contradiction.  

Assume next that $G$ is a dicyclic group $\mathrm{Dic}_{4m}$. It is  isomorphic to the matrix group 
generated by
  \[\begin{bmatrix} 
      0 & \mathrm{i}\\
      \mathrm{i}& 0 \\
   \end{bmatrix}, \quad 
   \begin{bmatrix} 
      \rho & 0 \\
      0 & \rho^{-1} \\
   \end{bmatrix} \]
   where $\mathrm{i}$ is an element of $\field$ with multiplicative order $4$ (so $\mathrm{i}^2=-1$) 
   and $\rho$ is an element of $\field$ of multiplicative order $2m$. 
   It is an easy exercise to show that the corresponding algebra of invariants is generated by 
   $x_1^{2m}-x_2^{2m}$, $(x_1x_2)^2$, $(x_1^{2m}+x_2^{2m})x_1x_2$ when $m$ is odd, and 
   by $x_1^{2m}+x_2^{2m}$, $(x_1x_2)^2$, $(x_1^{2m}-x_2^{2m})x_1x_2$ when $m$ is even. 
   If $m$ is odd, the vectors $[1,1]^T$ and $[1,-1]^T$ are separated by the degree $2m+2$ generator, 
   but all the smaller degree generators agree on them. 
   By assumption $\field$ has an element $\nu$ of multiplicative order $4m=|G|$.  
For even $m$ the vectors $[\nu,1]^T$ and $[\nu,-1]^T$ are separated by the degree 
   $2m+2$ generator, whereas the smaller degree generators coincide on them.

   A matrix representation of $\mathrm{C}_s\times \mathrm{Dic}_{4m}$ (where $s>1$ is odd and is co-prime to $m$) 
   is generated by
   \[   \begin{bmatrix} 
      0 & \mathrm{i}& 0 \\
      \mathrm{i}& 0 & 0 \\
      0 & 0 & -1
   \end{bmatrix}, \quad 
   \begin{bmatrix} 
      \rho & 0 & 0 \\
      0 & \rho^{-1} & 0 \\
      0 & 0 & 1
   \end{bmatrix}, \quad 
   \begin{bmatrix} 
      \varepsilon & 0 & 0 \\
      0 & \varepsilon & 0 \\
      0 & 0 & 1
   \end{bmatrix},\] 
where $\rho$ and $\varepsilon$ are roots of $1$ with multiplicative order $2m$ and $s$.    
The vectors $v:=[1,0,1]^T$ and $v':=[1,0,-1]^T$ are separated by the invariant 
$(x_1^{2ms}-x_2^{2ms})t$ when $m$ is odd and by $(x_1^{2ms}+x_2^{2ms})t$ when $m$ is even. 
On the other hand, any invariant of degree at most $2ms$ takes the same value on $v$ and $v'$ (this can be seen similarly to the argument in the second paragraph of the proof). 

It remains to deal with the case when 
$G\cong \mathrm{C}_s\times (\mathrm{C}_r\rtimes_{-1} \mathrm{C}_{2^n})$ where $n\ge 3$. 
A matrix representation of this group is generated by 
\[   \begin{bmatrix} 
      0 & \omega & 0 \\
      \omega & 0 & 0 \\
      0 & 0 & -1
   \end{bmatrix}, \quad 
   \begin{bmatrix} 
      \rho & 0 & 0 \\
      0 & \rho^{-1} & 0 \\
      0 & 0 & 1
   \end{bmatrix}, \quad 
   \begin{bmatrix} 
      \varepsilon & 0 & 0 \\
      0 & \varepsilon & 0 \\
      0 & 0 & 1
   \end{bmatrix},\] 
where $\omega$, $\rho$ and $\varepsilon$ are roots of $1$ with multiplicative order $2^n$, $r$, and $s$.   The square of the first matrix together with the second and the third generate a cyclic subgroup $A$ of $G$ of order $m:=2^{n-1}rs$, acting on the line spanned by 
$[1,0,0]^T$ via a character of order $m$. The invariant $(x_1^m+x_2^m)t$ separates 
$v:=[1,0,1]^T$ and $v':=[1,0,-1]^T$, and any invariant of degree at most $m$ takes the same value on $v$ and $v'$ (this can be seen similarly to the argument in the second paragraph of the proof). 
\end{proof}

\begin{proofof}{Theorem~\ref{thm:sepbeta index two}} 
The Noether number $\beta^\field(G)$ was computed for all groups with a cyclic subgroup of index $2$ in \cite[Theorem 10.3]{cziszter-domokos:indextwo}, 
where (as a special case of a statement on the so-called generalized Noether number) 
it was shown that 
if $G$ is a non-cyclic group with a cyclic subgroup of index two 
and $\mathrm{char}(\field)$ is not a divisor of $|G|$, then
\begin{equation}\label{eq:beta(index two cyclic)} 
\beta^\field(G) = \frac{1}{2} |G|  +
\begin{cases}
2 	& \text{ if } G=\mathrm{Dic}_{4m}, \text{  $m>1$};\\
1	& \text{ otherwise. }\end{cases}
\end{equation} 
Therefore the desired result on the separating Noether number  is an immediate consequence of Proposition~\ref{prop:betasep-index2}, the obvious inequality $\sepbeta^\field(G)\le \beta^\field(G)$, 
and \eqref{eq:beta(index two cyclic)}.  
\end{proofof}

\subsection{The groups $\mathrm{D}_{2n}\times \mathrm{C}_2$}  
In this section $n\ge 2$ is even, and 
\[G=\mathrm{D}_{2n}\times \mathrm{C}_2=\langle a,b,c\mid a^n=b^2=c^2=1,\ bab=a^{-1},\ ac=ca,\ bc=cb\rangle.\] 

\begin{proposition}\label{prop:D2nxC2} 
Assume that $\field$ 
contains an element of multiplicative order $n$. 
Then we have the inequality $\sepbeta^\field(G)\ge n+2$. 
\end{proposition}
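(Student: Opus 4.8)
The plan is to establish the lower bound $\sepbeta^\field(G)\ge n+2$; the matching upper bound will then follow from \cite[Corollary 5.5]{cziszter-domokos:indextwo} together with the inequality $\sepbeta^\field(G)\le\beta^\field(G)$. As in the proof of Proposition~\ref{prop:betasep-index2}, I would produce a single $\field G$-module $V$ and two points $v,v'\in V$ lying in distinct $G$-orbits that cannot be separated by invariants of degree at most $n+1$. I take $V=W\oplus U_\epsilon\oplus U_\eta$, where $W$ is the $2$-dimensional $\field G$-module on which $a$ acts by $\mathrm{diag}(\zeta,\zeta^{-1})$ for a fixed $\zeta\in\field$ of multiplicative order $n$, $b$ interchanges the two coordinates $x_1,x_2$, and $c$ acts trivially, while $\epsilon,\eta\in\widehat G$ are the characters given by $\epsilon(a)=\epsilon(b)=1,\ \epsilon(c)=-1$ and $\eta(a)=1,\ \eta(b)=\eta(c)=-1$. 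The reason for this choice is that $x_1^n-x_2^n$ and $t_\epsilon t_\eta$ have the same weight (namely the sign character of the $\mathrm{D}_{2n}$-factor), so that $(x_1^n-x_2^n)\,t_\epsilon t_\eta\in\field[V]^G$ is a genuine $G$-invariant of degree $n+2$; by contrast the ``obvious'' module $W\oplus U_\delta$ (with $\delta$ the sign character of $\mathrm{D}_{2n}$) produces only the degree $n+1$ invariant $(x_1^n-x_2^n)\,t_\delta$ of this shape, so it is precisely the extra involution $c$ and the mixed character $\eta$ that gain the additional unit.

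For the points I take $v=[1,0,1,1]^T$ and $v'=[1,0,1,-1]^T$, where the first two coordinates refer to $W$ and the last two to $U_\epsilon$ and $U_\eta$. First I would check that $G\cdot v\ne G\cdot v'$: the stabiliser in $G$ of $[1,0]^T\in W$ is exactly $\{1,c\}$ (an element $a^jb$ or $a^jbc$ sends $[1,0]^T$ off the first coordinate axis, whereas $a^j$ and $a^jc$ fix it only when $j=0$), and since $\epsilon(c)=\eta(c)=-1$ neither $1$ nor $c$ carries $v$ to $v'$. The invariant $(x_1^n-x_2^n)\,t_\epsilon t_\eta$ takes the value $1$ at $v$ and $-1$ at $v'$, so it separates them.

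The crux is to show that no $G$-invariant of degree $\le n+1$ separates $v$ and $v'$. By \eqref{eq:multihomogeneous separating}, applied to the three-block decomposition $V=W\oplus U_\epsilon\oplus U_\eta$, it suffices to consider a multihomogeneous invariant $f=t_\epsilon^{\,q}\,t_\eta^{\,r}\,g(x_1,x_2)$ with $g$ homogeneous of degree $p$ and $p+q+r\le n+1$. The $G$-invariance of $f$ forces, successively: $q+r$ even (from $c$); every monomial of $g$ is $x_1^ix_2^j$ with $i\equiv j\pmod n$ (from $a$); and $g(x_2,x_1)=(-1)^r g(x_1,x_2)$ (from $b$). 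Then $f(v)=g(1,0)$ and $f(v')=(-1)^r g(1,0)$, so if $f$ separates $v$ and $v'$ then $r$ is odd and $g(1,0)\ne 0$. Consequently $q$ is odd as well, so $q,r\ge 1$; the antisymmetry of $g$ excludes $g$ being a non-zero constant; and $g(1,0)\ne 0$ means that $x_1^p$ occurs in $g$, forcing $n\mid p$ and hence $p\ge n$. Therefore $\deg f=p+q+r\ge n+2$, a contradiction. Thus $\sepbeta(G,V)\ge n+2$, whence $\sepbeta^\field(G)\ge n+2$.

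The only genuine decision in this argument is the choice of the module $V$ --- in particular, recognising that one needs the character $\eta$ intertwining the sign characters of the two direct factors, so that $(x_1^n-x_2^n)\,t_\epsilon t_\eta$ is at once an invariant and of degree exactly $n+2$; once $V$ and the two points are fixed, the non-separation statement is a short parity-and-degree count. A minor point to keep in mind is the case $n=2$, where $W$ is reducible and $G\cong\mathrm{C}_2^3$; the argument above still goes through verbatim (the three-block multigrading is available regardless), so no separate treatment is required.
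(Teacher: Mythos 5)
Your proposal is correct and follows essentially the same route as the paper: the same module $W\oplus U_\epsilon\oplus U_\eta$ (the paper's two characters $t_1,t_2$ are your $\eta,\epsilon$), the same separating invariant $(x_1^n-x_2^n)t_\epsilon t_\eta$, points differing only in which one-dimensional coordinate carries the sign, and the same parity-plus-degree count (you spell out the monomial conditions directly where the paper invokes that the lowest-degree relative invariant of the sign character of $\mathrm{D}_{2n}$ is $x_1^n-x_2^n$). No gaps.
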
 

\begin{proof} 
Let $\rho$ denote an element of $\field$ 
   of multiplicative order $n$. 
Take the direct sum of the representations 
\[ a\mapsto \begin{bmatrix} 
      \rho & 0 \\
      0 & \rho^{-1} \\
   \end{bmatrix}, \quad 
   b\mapsto \begin{bmatrix} 
      0 & 1 \\
      1 & 0 \\
   \end{bmatrix}, \quad 
   c\mapsto \begin{bmatrix} 
      1 & 0 \\
      0 & 1 \\
   \end{bmatrix},  \]
\[a\mapsto 1, \quad b\mapsto -1, \quad c\mapsto -1
   \qquad \text{ and } \qquad a\mapsto 1, \quad b\mapsto 1, \quad c\mapsto -1.\] 
Recall that $x_1,x_2$ stand for the standard coordinate functions  on the $2$-dimensional summand, and $t_1,t_2$ stand for the coordinate functions on the $1$-dimensional summands. 
The invariant $(x_1^n-x_2^n)t_1t_2$ separates the points 
$v:=([1,0]^T,1,1)$ and $v':=([1,0]^T,1,-1)$ in $\field^4$. 
It is sufficient to show that $v$ and $v'$ can not be separated by an invariant of degree at most $n+1$. 
Suppose for contradiction that $h(v)\neq h(v')$ for some homogeneous 
$h\in \field[x_1,x_2,t_1,t_2]^G$ of degree at most $n+1$. We may assume by \eqref{eq:multihomogeneous separating} 
that $h$ is multihomogeneous (so it is homogeneous both in $t_1$ and $t_2$), 
and has minimal possible total degree. So $h=t_1^{k_1}t_2^{k_2}h_1(x_1,x_2)$. 
Since the $(x_1,x_2,t_1)$-coordinates of $v$ and $v'$ are the same, we have $k_2>0$. 
Since $h_1$ is $\langle c\rangle$-invariant, 
$k_1+k_2$ must be even. 
Moreover, $\field[t_1,t_2]^G=\field[t_1^2,t_2^2]$, and $t_1^2(v)=1=t_1^2(v')$, 
$t_2^2(v)=1=t_2^2(v')$, so $\deg(h_1)>0$, and (since $h$ has minimal possible degree) $h=t_1t_2h_1$. 
Thus $h_1$ is a relative $G$-invariant of degree at most $n-1$, 
on which $G$ acts via the character 
$a\mapsto 1,$ $b\mapsto -1$, $c\mapsto 1$. The action of $G$ on $\field^2$ factors through the action of the dihedral group $\mathrm{D}_{2n}\cong G/\langle c\rangle$, and it is well known (and easy to see) 
that the smallest degree relative invariant in $\field[x_1,x_2]$ with the above weight is $x_1^n-x_2^n$. 
We reached the desired contradiction. 
\end{proof} 

\begin{proofof}{Theorem~\ref{thm:sepbeta(D2nxC2)}}
We have the isomorphism 
\[\mathrm{D}_{2n}\times \mathrm{C}_2\cong (\mathrm{C}_n\times \mathrm{C}_2)\rtimes_{-1}\mathrm{C}_2.\]
Therefore by \cite[Corollary 5.5]{cziszter-domokos:indextwo} we have 
\[\beta^\field(\mathrm{D}_{2n}\times \mathrm{C}_2)= \mathsf{D}(\mathrm{C}_n\times \mathrm{C}_2)+1,\] 
where $\mathsf{D}(\mathrm{C}_n\times \mathrm{C}_2)$ is the Davenport constant of the 
abelian group $\mathrm{C}_n\times \mathrm{C}_2$ (the maximal length of an irreducible product-one 
sequence over $\mathrm{C}_n\times \mathrm{C}_2$). 
By \cite{olson2} we have $\mathsf{D}(\mathrm{C}_n\times \mathrm{C}_2)=n+1$, 
implying in turn that $\sepbeta^\field(\mathrm{D}_{2n}\times \mathrm{C}_2)\le n+2$. 
The reverse inequality holds by\ Proposition~\ref{prop:D2nxC2}. 
\end{proofof}

\section{Further groups with order $\leq 16$}\label{sec:easy groups} 

\subsection{The alternating group $\mathrm{A}_4$}

\begin{proposition} \label{prop:alt_n} 
For $n\ge 4$ we have the inequality $\sepbeta^\field(\mathrm{A}_n)\ge n(n-1)/2$. 
\end{proposition}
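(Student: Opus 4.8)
The plan is to exhibit a representation $V$ of $\mathrm{A}_n$ together with two points $v,v'\in V$ lying in distinct $\mathrm{A}_n$-orbits that cannot be separated by invariants of degree less than $n(n-1)/2$, while being separated by some invariant of that degree. The natural candidate is the permutation representation: let $\mathrm{A}_n$ act on $V=\field^n$ by permuting coordinates $e_1,\dots,e_n$, so that $\field[V]=\field[x_1,\dots,x_n]$. Here $\field[V]^{\mathrm{S}_n}$ is the ring of symmetric polynomials, and $\field[V]^{\mathrm{A}_n}=\field[V]^{\mathrm{S}_n}\oplus \field[V]^{\mathrm{S}_n}\cdot \Delta$, where $\Delta=\prod_{i<j}(x_i-x_j)$ is the Vandermonde (alternating) polynomial of degree $\binom{n}{2}=n(n-1)/2$, which is a relative invariant of the sign character.

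First I would choose $v$ and $v'$ so that they agree on all symmetric polynomials but differ on $\Delta$: the obvious choice is to let $v'=\sigma\cdot v$ for a transposition $\sigma$, where $v$ has pairwise distinct coordinates. Then $v$ and $v'$ have the same multiset of coordinates, hence the same value under every element of $\field[V]^{\mathrm{S}_n}$, but $\Delta(v')=-\Delta(v)\neq 0$, so $\Delta$ separates them; this gives a separating invariant of degree exactly $n(n-1)/2$. Since $v'=\sigma v$ with $\sigma$ odd, $v$ and $v'$ lie in different $\mathrm{A}_n$-orbits (as their coordinates are distinct, the full stabilizer in $\mathrm{S}_n$ is trivial, so the $\mathrm{S}_n$-orbit of $v$ splits into two $\mathrm{A}_n$-orbits).

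The key step, and the main obstacle, is showing that no invariant of degree strictly less than $n(n-1)/2$ separates $v$ from $v'$. Any $\mathrm{A}_n$-invariant $f$ decomposes as $f=p+q\Delta$ with $p,q$ symmetric. Since $p$ is symmetric, $p(v)=p(v')$. Thus $f(v)-f(v')=q(v)\Delta(v)-q(v')\Delta(v')=\bigl(q(v)+q(v')\bigr)\Delta(v)$, and because $q$ is symmetric we have $q(v)=q(v')$, giving $f(v)-f(v')=2q(v)\Delta(v)$. If $\deg f<n(n-1)/2$, then since $\Delta$ has degree exactly $n(n-1)/2$ the only way $q\Delta$ can be a summand of $f$ of degree $<n(n-1)/2$ is $q=0$; hence $f=p$ is symmetric and $f(v)=f(v')$. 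I should present this cleanly: every invariant of degree below $\binom n2$ is automatically symmetric, because $\field[V]^{\mathrm{A}_n}_{<\binom n2}=\field[V]^{\mathrm{S}_n}_{<\binom n2}$ (the $\Delta$-part contributes nothing below degree $\binom n2$), and symmetric polynomials cannot distinguish two points with the same coordinate multiset. This establishes $\sepbeta(\mathrm{A}_n,\field^n)\ge n(n-1)/2$, hence $\sepbeta^\field(\mathrm{A}_n)\ge n(n-1)/2$, as claimed. The only minor points to check are that $\field$ has enough elements to pick $n$ distinct scalars (automatic if $\field$ is infinite; in the finite case one works over an extension or notes $|\field|$ is large enough, but since the running assumption only constrains the characteristic this is harmless), and that $2$ is invertible, which holds as $\mathrm{char}\,\field\nmid |\mathrm{A}_n|$ forces $\mathrm{char}\,\field\neq 2$ for $n\ge 4$.
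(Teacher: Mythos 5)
Your proof is correct and follows essentially the same route as the paper: the permutation representation on $\field^n$, two points with the same coordinate multiset (distinct coordinates, differing by a transposition), and the decomposition $\field[x_1,\dots,x_n]^{\mathrm{A}_n}=\field[x_1,\dots,x_n]^{\mathrm{S}_n}\oplus\Delta\,\field[x_1,\dots,x_n]^{\mathrm{S}_n}$ forcing every $\mathrm{A}_n$-invariant of degree below $\binom{n}{2}$ to be symmetric. One tiny caveat: for a \emph{lower} bound you cannot ``work over an extension'' (that only bounds $\sepbeta$ from above), but your alternative justification is the right one and matches the paper's --- the running assumption $\mathrm{char}\,\field\nmid|\mathrm{A}_n|$ already forces $|\field|\ge n$ for $n\ge 4$.
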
 

\begin{proof} 
Our running assumption that $\mathrm{char}(\field)$ does not divide $|G|$ forces for $G=\mathrm{A}_n$ 
that $|\field|\ge n$ if $n\ge 4$. 
Therefore $\field$ has $n$ distinct elements $v_1,\dots,v_n$.   
The vectors $v:=[v_1,v_2,v_3,\dots,v_n]^T$ and $v':=[v_2,v_1,v_3,v_4,\dots,v_n]^T$ belong to the same orbit under the symmetric group $\mathrm{S}_n$ acting on $\field^n$ via permuting the coordinates, but $v$ and $v'$ have different orbits with respect to the alternating subgroup 
$\mathrm{A}_n$.  It is well known that the corresponding algebra of invariants is 
\[\field[x_1,\dots,x_n]^{\mathrm{A}_n}=\field[x_1,\dots,x_n]^{\mathrm{S}_n}
\oplus \Delta \field[x_1,\dots,x_n]^{\mathrm{S}_n},\] 
where $\Delta:=\sum_{1\le i<j\le n}(x_i-x_j)$. This shows that all $\mathrm{A}_n$-invariants of degree less than $n(n-1)/2$ are in fact $\mathrm{S}_n$-invariants. So they can not separate $v$ and $v'$, which have different $\mathrm{A}_n$-orbits. 
\end{proof} 

It is known that $\beta^\field(\mathrm{A}_4)=6$ 
(see \cite[Theorem 3.4]{CzD:1}), hence Proposition~\ref{prop:alt_n} 
has the following consequence: 

\begin{theorem}\label{thm:sepbeta(A4)} 
We have the equality 
$\sepbeta^\field(\mathrm{A}_4)=6$.
\end{theorem}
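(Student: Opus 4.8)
The plan is to combine the two inequalities that pin down $\sepbeta^\field(\mathrm{A}_4)$ from above and below. The upper bound $\sepbeta^\field(\mathrm{A}_4)\le \beta^\field(\mathrm{A}_4)=6$ is immediate from the general inequality \eqref{eq:sepbeta<beta} together with the known value $\beta^\field(\mathrm{A}_4)=6$ from \cite[Theorem 3.4]{CzD:1}; note that this value is valid over any field whose characteristic does not divide $12$, which is our running assumption. For the reverse inequality, I would invoke Proposition~\ref{prop:alt_n} with $n=4$, which gives $\sepbeta^\field(\mathrm{A}_4)\ge 4\cdot 3/2=6$. Putting the two together yields $\sepbeta^\field(\mathrm{A}_4)=6$, and there is essentially nothing more to do.

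There is no genuine obstacle here, since both ingredients are already in hand. The only point worth a sentence of care is the field hypothesis: Proposition~\ref{prop:alt_n} only needs $|\field|\ge 4$, which is automatic because $\mathrm{char}(\field)\nmid 12$ forces $\field$ to have at least $4$ elements (if $\mathrm{char}(\field)=2$ then $3\mid 12$ would be fine, but $2\mid 12$, so actually $\mathrm{char}(\field)\ne 2,3$ and hence $|\field|\ge 5$; in any case $|\field|\ge 4$ holds). Likewise the value $\beta^\field(\mathrm{A}_4)=6$ holds in this non-modular setting. So the hypothesis of Theorem~\ref{thm:sepbeta(A4)} — that $\field$ contains an element of multiplicative order $12$ — is in fact stronger than needed; the equality already holds whenever $\mathrm{char}(\field)\nmid 12$. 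I would remark on this if space permits, but it is not essential to the proof.

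In short, the proof is the two-line argument: $\sepbeta^\field(\mathrm{A}_4)\le \beta^\field(\mathrm{A}_4)=6$ by \eqref{eq:sepbeta<beta} and \cite[Theorem 3.4]{CzD:1}, while $\sepbeta^\field(\mathrm{A}_4)\ge 6$ by Proposition~\ref{prop:alt_n}. The ``hard part'' — exhibiting a representation and a pair of points in distinct $\mathrm{A}_4$-orbits that cannot be separated below degree $6$ — has already been carried out in the proof of Proposition~\ref{prop:alt_n}, where the natural permutation representation on $\field^4$ and the transposed pair $v,v'$ do the job, using the classical decomposition $\field[x_1,\dots,x_4]^{\mathrm{A}_4}=\field[x_1,\dots,x_4]^{\mathrm{S}_4}\oplus \Delta\,\field[x_1,\dots,x_4]^{\mathrm{S}_4}$ with $\deg\Delta=6$.
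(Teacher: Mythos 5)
Your proposal is correct and is essentially identical to the paper's own argument: the lower bound comes from Proposition~\ref{prop:alt_n} with $n=4$ (the permutation representation and the decomposition $\field[x_1,\dots,x_n]^{\mathrm{A}_n}=\field[x_1,\dots,x_n]^{\mathrm{S}_n}\oplus\Delta\,\field[x_1,\dots,x_n]^{\mathrm{S}_n}$), and the upper bound from $\sepbeta^\field(\mathrm{A}_4)\le\beta^\field(\mathrm{A}_4)=6$ via \cite[Theorem 3.4]{CzD:1}. Your side remark that the conclusion holds under the running assumption $\mathrm{char}(\field)\nmid 12$ alone is also consistent with how the paper states the theorem.
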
 

\subsection{The non-abelian group $(\mathrm{C}_2\times \mathrm{C}_2)\rtimes \mathrm{C}_4$} 

In this section 
\[G:=\langle a,b,c\mid a^2=b^2=c^4=1, \quad ab=ba, \quad cac^{-1}=b, \quad cbc^{-1}=a\rangle.\] 

\begin{theorem} \label{thm:sepbeta((C2xC2)rtimesC4)}
Assume that $\field$ has an element $\mathrm{i}$ of multiplicative order $4$. 
Then we have the equality 
$\sepbeta^\field((\mathrm{C}_2\times\mathrm{C}_2)\rtimes \mathrm{C}_4)=6$. 
\end{theorem}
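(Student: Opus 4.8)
The plan is to prove the equality $\sepbeta^\field(G)=6$ for $G=(\mathrm{C}_2\times\mathrm{C}_2)\rtimes\mathrm{C}_4$ by establishing the two inequalities $\sepbeta^\field(G)\le 6$ and $\sepbeta^\field(G)\ge 6$ separately. The upper bound is immediate from $\beta^\field(G)=6$ (recorded in the table of Theorem~\ref{thm:sepbeta(<32)}, taken from \cite{cziszter-domokos-szollosi}) together with the obvious inequality $\sepbeta^\field(G)\le\beta^\field(G)$. So the entire content is the lower bound: I must exhibit a finite-dimensional $\field G$-module $V$ and two points $v,v'\in V$ lying in distinct $G$-orbits which cannot be separated by any homogeneous $G$-invariant of degree $\le 5$. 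Following the philosophy stated in the introduction (``Comments on the methods used''), I expect a rather small-dimensional representation to suffice, so I will first look for the ``right'' small representation.

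The concrete approach: write down the irreducible $\field G$-modules. The group has order $16$; the commutator subgroup is $\langle ab\rangle\cong\mathrm{C}_2$ (since $cac^{-1}=b$ forces $ab^{-1}=ab\in[G,G]$), so $G/[G,G]\cong\mathrm{C}_4$, giving four linear characters, and then there are three two-dimensional irreducibles (from $16=4\cdot 1+3\cdot 4$). A natural candidate is a $2$-dimensional irreducible $W$ on which the Klein four group $\langle a,b\rangle$ acts by a pair of characters swapped by $c$, together with one or two linear characters $U_\chi$; so $V=W\oplus U_{\chi_1}(\oplus U_{\chi_2})$, exactly the shape \eqref{eq:V+U} for which the multihomogeneity reduction \eqref{eq:multihomogeneous separating} applies. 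I would diagonalize: pick a basis of $W$ in which $a\mapsto\mathrm{diag}(1,-1)$, $b\mapsto\mathrm{diag}(-1,1)$, $c\mapsto\begin{bmatrix}0&1\\ \mathrm{i}&0\end{bmatrix}$ (or a similar normalization so that $c^4=1$). Then $c^2=\mathrm{diag}(\mathrm i,\mathrm i)$ acts as a scalar, and on $\field[x_1,x_2]$ the relevant subgroup $A=\langle a,b,c^2\rangle$ (a cyclic or noncyclic group of order $\le 8$) has its invariant monomials controlled; I expect the lowest-degree nontrivial $A$-relative invariant in $x_1,x_2$ to have degree around $4$ or $5$, and tensoring with a linear character $t$ of appropriate weight produces a $G$-invariant of degree $5$ or $6$ that separates two points $v=([1,0]^T,\dots,1)$ and $v'=([1,0]^T,\dots,-1)$ differing only in a $\mathrm{C}_2$-coordinate. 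The core computation is then: every multihomogeneous invariant of total degree $\le 5$ must, after stripping off powers of the $t$-variables, contain an $A$-invariant monomial in $x_1$ alone (monomials with positive $x_2$-degree vanish on both points, invariants in the $t$'s alone take equal values), and the smallest such monomial has degree forcing total degree $\ge 6$ — exactly the style of argument used in the proofs of Proposition~\ref{prop:betasep-index2} and Proposition~\ref{prop:D2nxC2}.

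The main obstacle I anticipate is twofold. First, I must verify that $v$ and $v'$ genuinely lie in distinct $G$-orbits — with such small representations the orbit of a point can be surprisingly large, and the naive choice might fail to be faithful enough to separate the chosen points as orbits; I may need to add a second linear summand $U_{\chi_2}$ (making $V$ $4$-dimensional) precisely to pin down the orbits, just as $\mathrm{D}_{2n}\times\mathrm{C}_2$ required two one-dimensional summands in Proposition~\ref{prop:D2nxC2}. Second, and more delicately, I must make sure the representation is chosen so that there is genuinely \emph{no} invariant of degree $5$ separating the points — since $\beta^\field(G)=6$, degree-$6$ indecomposable invariants do exist, and I need the minimal-degree separating invariant to be one of them rather than some degree-$5$ invariant I overlooked. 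This requires a complete enumeration of the $A$-invariant (or $A$-relative-invariant) monomials in $x_1,x_2$ up to degree $5$ and checking each vanishes or is constant on the two points; the enumeration is finite and routine once the action is fixed, but getting the weight of the linear summand(s) exactly right so that the degree-$6$ invariant appears while nothing smaller works is the crux. I would handle this by computing the weight lattice of $A$ acting on $x_1$, identifying the order of that character (call it $e$), and choosing the linear summand weight to be its inverse, so that $x_1^e\cdot t$ has degree $e+1$; one then checks $e=5$ for the chosen $W$, yielding the bound $\sepbeta^\field(G)\ge 6$ and completing the proof.
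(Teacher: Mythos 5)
The upper bound in your proposal is fine and is exactly what the paper does: $\sepbeta^\field(G)\le\beta^\field(G)=6$. The problem is the lower bound, which is the entire content of the theorem and which you have only sketched as a plan; the plan as written contains a concrete error and, more importantly, anticipates a mechanism that cannot work for this group. First, the representation theory is off: from $[G,G]=\langle ab\rangle$ of order $2$ you get $G/[G,G]$ of order $8$ (it is $\mathrm{C}_4\times\mathrm{C}_2$, generated by the cosets of $c$ and $a$), not $\mathrm{C}_4$; hence $G$ has eight linear characters and two (not three) $2$-dimensional irreducibles, from $16=8\cdot 1+2\cdot 4$. Second, and this is the real gap, your proposed engine for the degree bound --- that after stripping the $t$-variables the invariant must contain an $A$-invariant monomial $x_1^{e}$ with $e=5$, so that $x_1^{e}t$ has degree $6$ --- is impossible here: in your diagonalized basis the subgroup $A=\langle a,b,c^2\rangle$ acts on $x_1$ through a character of order at most $2$, so the $A$-invariant monomials in $x_1$ alone are the powers of $x_1^{2}$, and no single linear summand can push the critical degree up to $6$ this way. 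The degree-$6$ obstruction for this group is not concentrated in the $x$-variables at all.

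What is actually needed (and what the paper does) is the opposite distribution of degree: take the $2$-dimensional representation with $c^2$ in its kernel (the reflection representation of $G/\langle c^2\rangle\cong\mathrm{D}_8$) together with \emph{two} linear characters, namely $(a,b,c)\mapsto(1,1,\mathrm{i})$ and $(a,b,c)\mapsto(-1,-1,\mathrm{i})$, with coordinates $t_1,t_2$. The separating invariant is $(x_1^2-x_2^2)\,t_1^3t_2$ of degree $6$, applied to $v=([1,0]^T,1,1)$ and $v'=([1,0]^T,1,-1)$; the $x$-part has degree only $2$ and the bulk of the degree sits in $t_1^3t_2$. The exclusion of degree $\le 5$ then runs as follows: a multihomogeneous separating $f=t_1^{k_1}t_2^{k_2}f_1(x_1,x_2)$ must have $k_2$ odd (the points agree in the other coordinates), $k_1+k_2$ even (because $c^2$ fixes $f_1$ but multiplies each $t_j$ by $-1$), $\deg f_1>0$ (the invariants in $t_1,t_2$ alone are polynomials in $t_1^4,t_1^2t_2^2,t_2^4$ and agree on $v,v'$), and there is no degree-$1$ relative invariant in $x_1,x_2$; this forces $(k_1,k_2)=(1,1)$ and $\deg f_1\le 3$, and one checks directly that the only nontrivial-weight relative invariants of $\mathrm{D}_8$ in degree $\le 3$ are $x_1x_2$ and $x_1^2-x_2^2$, neither of which has the weight inverse to that of $t_1t_2$. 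None of this parity/weight bookkeeping, nor the choice of the two specific characters that makes it come out to exactly $6$, is present in your proposal, so the lower bound is not established.
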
 

\begin{proof} 
 The equality $\beta^\field(G)=6$ is proved in \cite[Proposition 3.10]{cziszter-domokos-szollosi}, so it is sufficient 
 to prove that $\sepbeta^\field(G)\ge 6$. 
The element $c^2$ belongs to the center of $G$, and the factor group $G/\langle c^2\rangle$ is 
isomorphic to the dihedral group $\mathrm{D}_8$. 
The representation 
\[ a\mapsto \begin{bmatrix} 
      0 & 1 \\
      1 & 0 \\
   \end{bmatrix}, \quad 
   b\mapsto \begin{bmatrix} 
      0 & -1 \\
      -1 & 0 \\
   \end{bmatrix}, \quad 
   c\mapsto \begin{bmatrix} 
      1 & 0 \\
      0 & -1 \\
   \end{bmatrix} \]
   has $c^2$ in its kernel and gives the defining representation of $\mathrm{D}_8$ as the group of symmetries of a square in the euclidean plane.  
   The element $ab$ also belongs to the center of $G$, and the factor group $G/\langle ab\rangle \cong \mathrm{C}_4\times \mathrm{C}_2$ is generated by the coset of  $c$ and the coset of $a$. 
   Thus $G$ has the $1$-dimensional representations 
   \[a\mapsto 1, \quad b\mapsto 1, \quad c\mapsto \mathrm{i}
   \qquad \text{ and } \qquad a\mapsto -1, \quad b\mapsto -1, \quad c\mapsto \mathrm{i}.\] 
   Take the direct sum of the  above $2$-dimensional and $1$-dimensional representations, 
 and denote by $x_1,x_2$ the standard coordinate functions  on the $2$-dimensional summand, and by $t_1,t_2$ the coordinate functions on the $1$-dimensional summands. 
   The corresponding ring $\field[x_1,x_2,t_1,t_2]^G$ of invariants contains 
   the invariant $h:=(x_1^2-x_2^2)t_1^3t_2$, separating the points $v:=([1,0]^T,1,1)$ and $v':=([1,0]^T,1,-1)$  in $\field^4$. 
   By \eqref{eq:multihomogeneous separating}, in order to prove $\sepbeta^\field(G)\ge 6$, it is sufficient to show that 
 $v$ and $v'$ can not be separated by a multihomogeneous invariant of degree at most $5$. 
Suppose for contradiction that $f(v)\neq f(v')$, where $f$ is a multihomogeneous invariant of degree at most $5$. It has positive degree in $t_2$, because 
  the $(x_1,x_2,t_1)$ coordinates of $v$ and $v'$ are the same. Any $G$-invariant depending only on 
  $t_1$ and $t_2$ is a  polynomial in $t_1^4$, $t_1^2t_2^2$, $t_2^4$, and all these polynomials take the value $1$ both at $v$ and $v'$. 
  So $f=t_1^{k_1}t_2^{k_2}f_1(x_1,x_2)$, 
  where $0<k_1+k_2\le 4$, $\deg(f_1)>0$, and $k_2$ is odd. 
  Note that $t_1^{k_1}t_2^{k_2}$ is a relative $G$-invariant, hence $f_1(x_1,x_2)$ must be a relative $G$-invariant, on which $G$ acts by the inverse character. 
  Since $c^2$ fixes $f_1$, it must fix also $t_1^{k_1}t_2^{k_2}$, so $k_1+k_2$ is even. 
  Consequently, $(k_1,k_2)\in \{(1,1),(3,1),(1,3)\}$, and $f_1\in \field[x_1,x_2]$ is a relative 
 $\mathrm{D}_8\cong G/\langle c^2\rangle$-invariant of degree at most $5-(k_1+k_2)$. 
 There is no degree $1$ relative $G$-invariant in $\field[x_1,x_2]$, hence $t_1^{k_1}t_2^{k_2}=t_1t_2$, 
 and so $f$ is a relative invariant of degree at most $3$. 
 It is easy to see that up to non-zero scalar multiples, the only relative $G$-invariants in 
 $\field[x_1,x_2]_{\le 3}$ with non-trivial weight are $x_1x_2$ and $x_1^2-x_2^2$, but none of 
 $x_1x_2t_1t_2$ or $(x_1^2-x_2^2)t_1t_2$ is a $G$-invariant. 
 We arrived at the desired contradiction, finishing the proof. 
 \end{proof} 

\subsection{The Pauli group}
Assume that the field $\field$ has an element $\mathrm{i}$ of multiplicative order $4$.
In this section $G$ stands for the subgroup of $\mathrm{GL}(\field^2)$ 
generated by the matrices 
\[\begin{bmatrix} 
      0 & 1 \\
      1 & 0 \\
   \end{bmatrix}, \quad 
   \begin{bmatrix} 
      0 & -\mathrm{i} \\
      \mathrm{i} & 0 \\
   \end{bmatrix}, \quad 
   \begin{bmatrix} 
      1 & 0 \\
      0 & -1 \\
   \end{bmatrix}.  \] 

\begin{theorem} \label{thm:sepbeta(Pauli)}
Assume that $\field$ has an element of multiplicative order $4$. 
We have the equality 
 $\sepbeta^\field(G)=7$. 
 \end{theorem}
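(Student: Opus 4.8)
The plan is to split the statement into the two inequalities $\sepbeta^\field(G)\le 7$ and $\sepbeta^\field(G)\ge 7$. For the upper bound I would invoke the known value $\beta^\field(G)=7$ (recorded in the table of Theorem~\ref{thm:sepbeta(<32)}, taken from \cite{cziszter-domokos-szollosi}) together with the trivial inequality $\sepbeta^\field(G)\le \beta^\field(G)$; so the only real content is the lower bound $\sepbeta^\field(G)\ge 7$. To prove the lower bound I would exhibit a concrete $\field G$-module $V$ and two points $v,v'\in V$ lying in distinct $G$-orbits that cannot be separated by any invariant of degree at most $6$, while being separated by some invariant of degree $7$. The natural candidate for $V$ is the direct sum of the given $2$-dimensional representation $W$ (the one used to define $G$ as a matrix group) with one or two suitable $1$-dimensional representations $U_{\chi}$; the structure of the center of $G$ and of $\widehat G$ should be analyzed first to see which characters are available. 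Since the Pauli group has $\beta^\field(G)=7$ realized by relative invariants on the $2$-dimensional irreducible together with a character, the separating example should mirror this: take $v=([1,0]^T,\text{(units)})$ and $v'=([1,0]^T,\text{(units with one sign flipped)})$, so that the projections to $W$ coincide and the difference is detected only by a high-degree relative invariant on $W$ multiplied by a character-variable.

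The key steps, in order, are as follows. First, I would identify the center $Z(G)$ and the abelianization $G/[G,G]$, hence the character group $\widehat G$, and determine the smallest-degree nontrivial relative invariants in $\field[x_1,x_2]$ for each weight $\chi\in\widehat G$ restricted to the action on $W$. (By the structure $G=(\mathrm{C}_4\times\mathrm{C}_2)\rtimes_\phi\mathrm{C}_2$, the quotient by a suitable central involution should be one of the index-two groups whose relative invariants on a $2$-dimensional module are classical binary-form-type invariants of degree related to $\tfrac12|G|$.) Second, with the appropriate $1$-dimensional summand $U_\chi$ chosen so that some relative invariant of $W$ of degree $6$ times $t_\chi$ is a genuine $G$-invariant of degree $7$, I would write down that invariant explicitly and check it separates the chosen $v,v'$. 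Third — the heart of the argument — I would suppose for contradiction that a multihomogeneous invariant $f$ of degree $\le 6$ separates $v$ and $v'$, write $f=t^d f_1(x_1,x_2)$ with $f_1$ a relative invariant of $W$, and run the now-standard elimination: invariants in the $t$-variables alone take equal values at $v,v'$; invariants in $(x_1,x_2)$ alone cannot separate since the $W$-projections agree; monomials of $f_1$ with positive degree in $x_2$ vanish at both points; hence $f_1$ must contain a relative-invariant monomial (or combination) depending only on $x_1$, which forces its degree to be a multiple of the order of the relevant character, pushing $\deg(f)$ past $6$ and yielding the contradiction. One must also use central elements fixing $f$ to constrain the parity/divisibility of the exponents $d$ of the $t$-variables, exactly as in the proofs of Proposition~\ref{prop:betasep-index2} and Theorem~\ref{thm:sepbeta((C2xC2)rtimesC4)}.

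The main obstacle I anticipate is the third step: because the $2$-dimensional representation of the Pauli group is \emph{not} diagonalizable for all generators (one generator is the off-diagonal matrix $\begin{bmatrix}0&-\mathrm{i}\\\mathrm{i}&0\end{bmatrix}$, another the off-diagonal permutation-type matrix), the algebra $\field[x_1,x_2]^A$ for the relevant abelian subgroup $A$, and the relative invariants for each weight, require a careful hands-on computation rather than a one-line monomial description; in particular I expect to need to verify that \emph{no} low-degree relative invariant of the right weight depending only on $x_1$ exists, and that the genuinely low-degree relative invariants (things like $x_1x_2$, $x_1^2\pm x_2^2$, $x_1^4+x_2^4$, $x_1^2x_2^2$ and their analogues) all either vanish at the chosen points or fail to pair with an available character to give a $G$-invariant. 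Getting the bookkeeping of weights and central constraints right — so that the only surviving possibility is the degree-$7$ invariant — is where the ad hoc work lies, but it should go through by the same template already used twice in the paper.
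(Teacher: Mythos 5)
Your overall architecture coincides with the paper's: the upper bound comes from $\sepbeta^\field(G)\le\beta^\field(G)=7$, and the lower bound from a module $W\oplus U_\chi$, where $W=\field^2$ is the defining representation and $\chi$ is the character sending each of the three generating matrices to $-1$, with separating invariant $(x_1^4-x_2^4)x_1x_2\,t$ of degree $7$. However, there are two concrete gaps. First, your choice of points $([1,0]^T,\dots)$ cannot work: the stabilizer of $[1,0]^T$ contains $\mathrm{diag}(1,-1)$, on which $\chi$ takes the value $-1$, so the two points you propose lie in the \emph{same} $G$-orbit and no invariant whatsoever separates them; moreover the candidate degree-$7$ invariant $(x_1^4-x_2^4)x_1x_2\,t$ vanishes whenever a coordinate of the $W$-part is zero. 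One needs a base point $w=(a,b)$ with $ab\neq 0$ and $a^4\neq b^4$, so that $\chi$ is trivial on $\mathrm{Stab}_G(w)$ and the degree-$7$ invariant does not vanish; this rules out $[1,0]^T$ and also $[1,1]^T$ (note that such a point exists only if $\field$ has more than $5$ elements, a subtlety worth flagging since the hypothesis only requires an element of order $4$).

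Second, the heart of your argument --- that there is no nonzero relative invariant of weight $\chi$ in $\field[x_1,x_2]$ of degree at most $5$ --- is left as an unexecuted ``hands-on computation,'' and the monomial-counting template you cite from the index-two and $(\mathrm{C}_2\times\mathrm{C}_2)\rtimes\mathrm{C}_4$ proofs does not transfer, as you yourself note. The paper closes this with a structural observation you are missing: $G$ acts on $\field^2$ as a group generated by reflections, so by the Shephard--Todd--Chevalley theorem $\field[x_1,x_2]^G=\field[x_1^4+x_2^4,(x_1x_2)^2]$, and by Chevalley's theorem the quotient of $\field[x_1,x_2]$ by the Hilbert ideal is the regular representation of $G$. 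Hence the linear character $\chi$ occurs exactly once in the coinvariant algebra, necessarily in degree $6$ via $(x_1^4-x_2^4)x_1x_2$, so every nonzero relative invariant of weight $\chi$ has degree at least $6$. This single fact replaces all the case analysis you anticipate and immediately yields the contradiction in your third step, after reducing (via invariance of $t^2$) to $h=th_1$ with $h_1$ a weight-$\chi$ relative invariant of degree at most $5$. With the corrected choice of points and this reflection-group input, your plan does go through.
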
 
 \begin{proof}
Since $G$ is generated by reflections on the space $\field^2$, by the Sheppard-Todd-Chevalley Theorem (see e.g. 
\cite[Theorem 7.2.1]{benson} or 
\cite[Section 3.9.4]{derksen-kemper}), 
$\field[x_1,x_2]^G$ is a polynomial algebra with two generators.  In fact 
it is generated by $x_1^4+x_2^4$ and $(x_1x_2)^2$. 
Moreover, the factor of $\field[x_1,x_2]$ modulo the Hilbert ideal (the ideal generated by 
 $x_1^4+x_2^4$ and $(x_1x_2)^2$) is isomorphic to the regular representation of $G$ (see \cite{chevalley}). 
 The element $f:=(x_1^4-x_2^4)x_1x_2$ has the property that $g\cdot f=-f$ where $g$ is any of the above three generators of $G$. One can easily see that $f$ does not belong to the Hilbert ideal. Take an extra indeterminate $t$ and extend the action of  
 $G$ to $\field[x_1,x_2,t]$ by setting $g\cdot t=-t$ for any of the above three generators of $G$. 
 Then $ft$ is a degree $7$ $G$-invariant separating $v:=[1,1,1]^T$ and $v':=[1,1,-1]^T$. 
 We claim that $v$ and $v'$ can not be separated by an invariant of degree at most $6$. 
 Suppose for contradiction that $h$ is a multihomogeneous 
 $G$-invariant of degree at most $6$ 
 with $h(v)\neq h(v')$ (cf. \eqref{eq:multihomogeneous separating}). Then $h$ is homogeneous in $t$. 
 Since $v$ and $v'$ have the same $(x_1,x_2)$ coordinates, any element of $\field[x_1,x_2]$ takes the same value on them, so $h$ has positive degree in $t$. Moreover, 
 since $t^2$ 
 is $G$-invariant, we may assume that $h=th_1$ for some $h_1\in \field[x_1,x_2]$, where $h_1$ 
 is homogeneous of degree at most $5$. By $G$-invariance of $h$ we must have 
 that $g\cdot h_1=-h_1$ for any of the above three generators $g$ of $G$. 
 However, this $1$-dimensional representation of $G$ occurs only once as a summand in 
 the factor of $\field[x_1,x_2]$ modulo the Hilbert ideal, and we found already one occurrence in the 
 degree $6$ homogeneous component. This contradiction proves the inequality $\sepbeta^\field(G)\ge 7$. 
 The reverse inequality $\sepbeta^\field(G)\le 7$ follows from the equality $\beta^\field(G)=7$ proved in \cite[Example 5.4]{cziszter-domokos-geroldinger}. 
\end{proof}

\section{The smallest non-abelian groups  with $\sepbeta^\field(G)<\beta^\field(G)$}
\label{sec:HxC2}

\subsection{Reduction to multiplicity-free representations and the Helly dimension}\label{sec:helly} 

We record a consequence of the results of \cite{draisma-kemper-wehlau} and 
Lemma~\ref{lemma:spanning invariants}: 

\begin{lemma} \label{lemma:multfree} 
Let $V_1,\dots,V_q$ be a complete irredundant list of representatives of the isomorphism classes of simple $\field G$-modules. 
Assume that 
\[|K|>(\max\{\dim(V_j)\mid j=1,\dots,q\}-1)|G|.\] 
Then we have the equality 
\[\sepbeta^\field(G)=\sepbeta(G,V_1\oplus\cdots
\oplus V_q).\] 
\end{lemma}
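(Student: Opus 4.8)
The plan is to prove the two inequalities separately. The inequality $\sepbeta^\field(G)\ge \sepbeta(G,V_1\oplus\cdots\oplus V_q)$ is trivial since $V_1\oplus\cdots\oplus V_q$ is a particular $\field G$-module. The content is the reverse inequality: every $\field G$-module $V$ satisfies $\sepbeta(G,V)\le \sepbeta(G,V_1\oplus\cdots\oplus V_q)$ under the stated hypothesis on $|\field|$. First I would recall the notion of \emph{Helly dimension} of a group acting on an affine space and the key result of \cite{draisma-kemper-wehlau}: for a finite group $G$ acting linearly, the separating Noether number $\sepbeta(G,V)$ for an arbitrary module $V$ is controlled by the behaviour on submodules that are sums of at most $\kappa$ isotypic components, where $\kappa$ is a bound on the Helly dimension; and moreover there is a ``multiplicity reduction'' principle saying that for separation purposes one may restrict to \emph{multiplicity-free} representations, i.e. direct sums of pairwise non-isomorphic simple modules, provided the base field is large enough relative to $|G|$ and the maximal dimension of a simple module. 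The precise bound $|\field|>(\max_j\dim V_j-1)|G|$ is exactly the hypothesis appearing in the relevant statement of \cite{draisma-kemper-wehlau}, so the job is to quote it correctly and check that its hypotheses are met.

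The argument would then proceed as follows. Given an arbitrary finite-dimensional $\field G$-module $V$, decompose it into isotypic components $V\cong \bigoplus_{j=1}^q V_j^{\oplus a_j}$. By the result of \cite{draisma-kemper-wehlau} on multiplicity-free reduction (valid because $|\field|$ exceeds $(\max_j\dim V_j-1)|G|$), any two points of $V$ with distinct $G$-orbits can already be separated by an invariant pulled back from the multiplicity-free submodule $V_1\oplus\cdots\oplus V_q$ via a suitable $G$-equivariant linear projection, in degree no larger than what is needed on $V_1\oplus\cdots\oplus V_q$ itself. Concretely, one uses that a generic $G$-equivariant linear map $V\to V_1\oplus\cdots\oplus V_q$ (collapsing each isotypic block $V_j^{\oplus a_j}$ onto a single copy of $V_j$) separates the orbits of any fixed pair of points $v,v'$ with $G\cdot v\ne G\cdot v'$, and the field being large enough guarantees such a generic map exists over $\field$. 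Pulling back a separating invariant of degree $\le \sepbeta(G,V_1\oplus\cdots\oplus V_q)$ along this map yields a separating invariant on $V$ of the same degree, hence $\sepbeta(G,V)\le \sepbeta(G,V_1\oplus\cdots\oplus V_q)$. Taking the supremum over $V$ gives $\sepbeta^\field(G)\le \sepbeta(G,V_1\oplus\cdots\oplus V_q)$, and combined with the trivial inequality we get equality.

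A complementary way to organise the same idea, which may be cleaner to write, is to invoke Lemma~\ref{lemma:spanning invariants} to reduce to the algebraically closed case first (where the cited results of \cite{draisma-kemper-wehlau} are usually stated), and then descend: over $\overline{\field}$ the multiplicity-free reduction is standard, and the explicit size bound on $|\field|$ is precisely what allows the generic equivariant projection and the separating invariants to be chosen with coefficients in $\field$ rather than $\overline{\field}$. One has to be slightly careful that the simple $\field G$-modules and the simple $\overline{\field}G$-modules need not coincide, but since $\mathrm{char}(\field)\nmid |G|$ and $\field$ is (in the intended applications) a splitting field or large enough, the list $V_1,\dots,V_q$ is absolute; in the general statement one simply takes $V_1,\dots,V_q$ to be the simple $\field G$-modules as written, and the bound involves their dimensions over $\field$.

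The main obstacle I anticipate is purely bookkeeping: matching the exact hypothesis $|\field|>(\max_j\dim V_j-1)|G|$ to the precise quantitative form of the multiplicity-free reduction in \cite{draisma-kemper-wehlau} (their results are phrased in terms of the Helly dimension $\kappa$ and a counting of ``bad'' projections, and one must verify that the number of linear conditions a projection must avoid in order to separate a given orbit pair is at most $(\max_j\dim V_j-1)|G|$, so that a field of strictly larger cardinality admits a good projection). Once that counting is in place — essentially, for each of the at most $|G|$ group elements $g$ one must avoid the hyperplane where the projection fails to detect $v-gv'$, and the relevant space of equivariant projections on each isotypic block has the stated dimension drop — the rest is a direct quotation of \cite{draisma-kemper-wehlau} together with Lemma~\ref{lemma:spanning invariants}, with no further difficulty.
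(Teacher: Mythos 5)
Your proposal takes essentially the same route as the paper: the inequality $\sepbeta^\field(G)\ge \sepbeta(G,V_1\oplus\cdots\oplus V_q)$ is immediate, and the substantive reverse inequality is delegated to \cite{draisma-kemper-wehlau}. The paper's actual proof is a terse two-step citation: first \cite[Corollary 2.6]{draisma-kemper-wehlau} reduces an arbitrary $V\cong V_1^{m_1}\oplus\cdots\oplus V_q^{m_q}$ to $V_1^{\dim V_1}\oplus\cdots\oplus V_q^{\dim V_q}$ (this step needs no hypothesis on $|\field|$), and then the polarization results \cite[Theorem 3.4 (ii), Proposition 3.3 (ii)]{draisma-kemper-wehlau} bring the multiplicities down from $\dim V_j$ to one; the cardinality hypothesis $|\field|>(\max_j\dim V_j-1)|G|$ is exactly what that second step consumes. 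Your ``generic $G$-equivariant projection'' mechanism is a legitimate, more self-contained way to see the multiplicity reduction, and the resulting invariant $f\circ\pi$ does have the right degree; but note that the projection $\pi$ must be chosen per pair $(v,v')$, and your counting does not reproduce the stated hypothesis. Avoiding the $|G|$ proper subspaces $\{\pi\in\mathrm{Hom}_G(V,V_1\oplus\cdots\oplus V_q)\mid \pi(v-gv')=0\}$ requires roughly $|\field|>|G|$, which neither implies nor is implied by $|\field|>(\max_j\dim V_j-1)|G|$: when all $V_j$ are one-dimensional the lemma's hypothesis is vacuous while your argument still needs a large field, so as written your proof does not cover that case (it does cover every application in the paper, where $\field$ contains an element of multiplicative order $|G|$). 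The paper sidesteps this precisely by doing the field-independent reduction to $\dim V_j$ copies first. Finally, the suggested detour through $\overline{\field}$ is unnecessary and would be delicate here: Lemma~\ref{lemma:spanning invariants} gives only one-sided inequalities, the conclusion of the lemma is an equality with $\sepbeta(G,V_1\oplus\cdots\oplus V_q)$ computed over $\field$ itself, and the simple modules can change under field extension; better to work over $\field$ throughout, as the paper does.
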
 

\begin{proof} An arbitrary $\field G$-module $V$ is isomorphic to 
$V_1^{m_1}\oplus\cdots\oplus V_q^{m_q}$, where 
$W^m$ stands for the direct sum $W\oplus\cdots \oplus W$ of $m$ copies of $W$. 
By \cite[Corollary 2.6]{draisma-kemper-wehlau} we have 
$\sepbeta(G,V)\le \sepbeta(G,V_1^{\dim(V_1)}\oplus\cdots\oplus V_q^{\dim(V_q)})$, 
and by \cite[Theorem 3.4 (ii)]{draisma-kemper-wehlau} and 
\cite[Proposition 3.3 (ii)]{draisma-kemper-wehlau} we have 
$\sepbeta(G,V_1^{\dim(V_1)}\oplus\cdots\oplus V_q^{\dim(V_q)})=
\sepbeta(G,V_1\oplus \cdots \oplus V_q)$. 
\end{proof} 

Therefore to compute $\sepbeta^\field(G)$ it is sufficient to deal with multiplicity-free representations of $G$ (unless possibly if $K$ is a finite field with too few elements).  
Lemma~\ref{lemma:multfree} can be improved for groups whose Helly dimension is 
'small'. The \emph{Helly dimension} $\kappa(G)$ is the minimal positive integer 
$k$ having the following property: any set of cosets in $G$ with empty intersection contains a subset of at most $k$ cosets with empty intersection; this quantity was introduced in \cite[Section 4]{domokos:typical}. 
Here we introduce a refinement, yielding sharper results in the present applications. 
Let $V=V_1\oplus\cdots\oplus V_n$ be a $\field G$-module with a fixed direct sum 
decomposition. Denote by $\kappa(G,V)$ the minimal 
positive integer $k$ such that if $C_1,\dots,C_n$ are cosets 
in $G$, where $C_i$ is a left coset with respect to the stabilizer subgroup in 
$G$ of some element $v_i$ in $V_i$, and $\cap_{i=1}^nC_i=\emptyset$, 
then there exists $1\le i_1<\cdots<i_k\le n$ with $\cap_{j=1}^kC_{i_j}=\emptyset$. 
Clearly $\kappa(G,V)\le \kappa(G)$. The following statement is a variant 
of \cite[Lemma 4.1]{domokos-szabo}. 

\begin{lemma}\label{lemma:helly} 
Under the assumptions and notation of Lemma~\ref{lemma:multfree}, 
there exists a $k\le\kappa(G,V_1\oplus\cdots \oplus V_q)$ and 
$1\le j_1<\dots<j_k\le q$ with 
$\sepbeta^\field(G)=\sepbeta(G,V_{j_1}\oplus\cdots\oplus V_{j_k})$. 
\end{lemma}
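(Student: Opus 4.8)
The plan is to reduce, via Lemma~\ref{lemma:multfree}, to understanding when two points of $V:=V_1\oplus\cdots\oplus V_q$ fail to be separated by invariants of low degree, and then to show that such a failure can always be ``witnessed'' using only a small subcollection of the summands $V_{j_1},\dots,V_{j_k}$ indexed by a set of size at most $\kappa(G,V)$. By Lemma~\ref{lemma:multfree} we have $\sepbeta^\field(G)=\sepbeta(G,V)$, so it suffices to produce $j_1<\dots<j_k$ with $k\le\kappa(G,V)$ such that $\sepbeta(G,V_{j_1}\oplus\cdots\oplus V_{j_k})\ge\sepbeta(G,V)$ (the reverse inequality is automatic from \eqref{eq:beta(H)}-type monotonicity, i.e. the subgroup/subrepresentation monotonicity of $\sepbeta$, or more directly because $V_{j_1}\oplus\cdots\oplus V_{j_k}$ embeds $G$-equivariantly as a direct summand of $V$).

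\medskip
\noindent\textbf{Main argument.} First I would fix $d:=\sepbeta(G,V)-1$ and choose, by definition of $\sepbeta$, a pair of points $v=(v_1,\dots,v_q)$ and $w=(w_1,\dots,w_q)$ in $V$ with $G\cdot v\neq G\cdot w$ that cannot be separated by any invariant of degree $\le d$. The key structural observation is the standard one (as in the proof of \cite[Theorem 3.12.1]{derksen-kemper}, or the orbit-separation lemma cited before Definition~\ref{def:separating set}): two points of a finite group representation lie in the same orbit if and only if there is a single group element carrying one to the other, and the failure to separate by \emph{all} invariants forces $G\cdot v$ and $G\cdot w$ to lie in the same orbit; more useful here is the multihomogeneous refinement \eqref{eq:multihomogeneous separating}, which lets us work summand by summand. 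For each $i$, if $G\cdot_{V_i} v_i\cap G\cdot_{V_i} w_i=\emptyset$ as sets then already the coordinate algebra $\field[V_i]^G$ would separate $v$ and $w$; but one must be more careful — the right framework, following \cite[Lemma 4.1]{domokos-szabo}, is to pass to a point $w'$ in the $G$-orbit of $w$ chosen coordinatewise and to encode the obstruction to $v$ and $w$ lying in one orbit as the emptiness of an intersection $\bigcap_{i=1}^q C_i$ of cosets $C_i=\{g\in G: g v_i = w'_i\}$ (each a left coset of $\mathrm{Stab}_G(v_i)$, possibly empty, but the genuinely constraining case is when each $C_i$ is nonempty and the global intersection is empty).

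\medskip
\noindent Now invoke the definition of $\kappa(G,V)$: since $\bigcap_{i=1}^q C_i=\emptyset$ with each $C_i$ a left coset of the stabilizer of $v_i\in V_i$, there exist $1\le j_1<\cdots<j_k\le q$ with $k\le\kappa(G,V)$ and $\bigcap_{t=1}^k C_{j_t}=\emptyset$. This says precisely that the restricted points $(v_{j_1},\dots,v_{j_k})$ and $(w'_{j_1},\dots,w'_{j_k})$ lie in distinct $G$-orbits of $V':=V_{j_1}\oplus\cdots\oplus V_{j_k}$ (there is no $g\in G$ simultaneously matching all $k$ coordinates). It remains to check that these two points of $V'$ cannot be separated by an invariant of degree $\le d$: any $f\in\field[V']^G_{\le d}$ pulls back under the projection $V\to V'$ to an element of $\field[V]^G_{\le d}$, which by hypothesis takes equal values on $v$ and $w$, hence on $v$ and $w'$ (same orbit), hence on the two restricted points. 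Therefore $\sepbeta(G,V')\ge d+1=\sepbeta(G,V)=\sepbeta^\field(G)$, and combined with the reverse inequality we get equality.

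\medskip
\noindent\textbf{Main obstacle.} The delicate point I expect to need care with is the translation between ``$v,w$ not separated by invariants of degree $\le d$'' and ``the relevant coset intersection is empty,'' in particular the choice of the orbit representative $w'$ so that the cosets $C_i$ are genuinely left cosets of the \emph{stabilizers of the $v_i$} (and not of some conjugates), which is what the definition of $\kappa(G,V)$ requires; the multihomogeneity reduction \eqref{eq:multihomogeneous separating} and the multiplicity-free reduction of Lemma~\ref{lemma:multfree} are exactly what make this bookkeeping go through, and the argument is otherwise a direct transcription of \cite[Lemma 4.1]{domokos-szabo} with $\kappa(G)$ replaced by the finer invariant $\kappa(G,V)$. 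One should also verify at the outset that $\kappa(G,V_1\oplus\cdots\oplus V_q)$ is well defined (finite), which is clear since $\kappa(G,V)\le\kappa(G)\le|G|$.
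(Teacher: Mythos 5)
Your proof is correct and is essentially the paper's argument run in the contrapositive: the paper fixes $d\ge\sepbeta(G,V_{i_1}\oplus\cdots\oplus V_{i_k})$ for all $k$-element index sets ($k=\kappa(G,V)$), takes a pair $v,v'$ unseparated in degree $d$, and deduces from the nonemptiness of every $k$-fold intersection of the cosets $C_j=\{g\in G\mid g\cdot v_j=v'_j\}$ that $\bigcap_{j=1}^q C_j\neq\emptyset$, hence $g\cdot v=v'$ for some $g$ and $V$ itself is separated in degree $d$ --- which is exactly your coset bookkeeping read in the other direction, combined with the same projection/pullback argument for the reverse inequality. The one point you flag but do not close --- some $C_i$ empty, in which case the defining property of $\kappa(G,V)$ (which quantifies over genuine left cosets of stabilizers) does not apply to that family --- is settled by noting that then $v_i$ and $w_i$ lie in distinct $G$-orbits of $V_i$ yet are unseparated by $\field[V_i]^G_{\le d}$ (pulled back along the projection $V\to V_i$), so the singleton $\{i\}$ already serves as the required index set with $k=1$; in the paper's formulation this case never arises because the standing hypothesis forces $G\cdot v_j=G\cdot v'_j$ for every $j$.
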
 
\begin{proof} 
By Lemma~\ref{lemma:multfree} we have 
$\sepbeta^\field(G)=\sepbeta(G,V)$, where $V=V_1\oplus\cdots\oplus V_q$. 

Assume that $d\ge \sepbeta(G,V_{i_1}\oplus\cdots \oplus V_{i_k})$ 
for all $1\le i_1<\dots<i_k\le q$, where $k=\kappa(G,V)$. 
Take $v=(v_1,\dots,v_q)$ and $v'=(v'_1,\dots,v'_q)$ such that 
$f(v)=f(v')$ for all $f\in \field[V]^G_{\le d}$. 
Then $(v_{i_1},\dots,v_{i_k})$ and $(v'_{i_1},\dots,v'_{i_k})$ 
belong to the same $G$-orbit in $V_{i_1}\oplus\cdots\oplus V_{i_k}$ 
by the choice of $d$. In particular, $G\cdot v_j=G\cdot v'_j$ 
for $j=1,\dots,q$. Set $C_j:=\{g\in G\mid g\cdot v_j=v'_j\}$, 
then $C_j$ is a left coset in $G$ with respect to the stabilizer subgroup in 
$G$ of $v_j$. Moreover, $\cap_{j=1}^kC_{i_j}\neq \emptyset$ 
(an element $g$ with $g\cdot (v_{i_1},\dots,v_{i_k})=
(v'_{i_1},\dots,v'_{i_k})$ belongs to this intersection). 
This holds for all $1\le i_1<\cdots<i_k\le q$, 
hence by definition of $\kappa(G,V)$ we conclude that 
$\cap_{j=1}^qC_j\neq \emptyset$. For $g\in \cap_{j=1}^qC_j\neq \emptyset$ 
we have $g\cdot v=v'$. This shows the inequality 
$\sepbeta(G,V)\le d$. In particular, 
\begin{equation}\label{eq:j1-jk}
\sepbeta(G,V)\le \sepbeta(G,V_{j_1}\oplus\cdots \oplus V_{j_k}),
\end{equation}
where 
$\sepbeta(G,V_{j_1}\oplus\cdots \oplus V_{j_k})=\max\{\sepbeta(G,V_{i_1}\oplus\cdots \oplus V_{i_k})\mid 
1\le i_1<\dots<i_k\le q\}$. 

The  projection of $V$ onto its direct summand $V_{j_1}\oplus\cdots \oplus V_{j_k}$ 
induces a degree-preserving embedding of 
$\field[V_{j_1}\oplus\cdots \oplus V_{j_k}]^G$ into $\field[V]^G$, 
hence $\sepbeta(G,V_{j_1}\oplus\cdots \oplus V_{j_k})\le \sepbeta(G,V)$, 
showing the reverse inequality to \eqref{eq:j1-jk}. 
\end{proof}

In view of Lemma~\ref{lemma:helly}, it is helpful to find upper bounds on the Helly dimension. By \cite[Lemma 4.2]{domokos:typical} we have the inequality 
\begin{equation}\label{eq:kappa-mu}
\kappa(G)\le \mu(G)+1,
\end{equation} 
where $\mu(G)$ stands for the maximal size of an intersection independent set of subgroups of $G$. Recall that a set of subgroups of $G$ is \emph{intersection independent} if none contains the intersection of the others. 

\begin{lemma}\label{lemma:prime power} 
If a set $S$ of intersection independent subgroups in a group contains a cyclic subgroup of prime power order, then $|S|\le 2$.  
\end{lemma}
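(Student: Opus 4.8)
The plan is to argue by contradiction: suppose $S$ is an intersection independent set of subgroups of some group, that $S$ contains a cyclic subgroup $P$ of prime power order, and that $|S|\ge 3$. Since $|S|\ge 3$ we may choose two further distinct members $Q_1,Q_2\in S\setminus\{P\}$. The key (and essentially only) input is that the subgroups of a cyclic group of prime power order are linearly ordered by inclusion: $P$ has a unique subgroup of each possible order, and these form a chain.

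First I would form the two ``all-but-one'' intersections $A:=\bigcap_{K\in S\setminus\{Q_1\}}K$ and $B:=\bigcap_{K\in S\setminus\{Q_2\}}K$. Because $P$ is a member of $S$ distinct from both $Q_1$ and $Q_2$, it occurs among the factors of each of these intersections, so $A\subseteq P$ and $B\subseteq P$. By the chain property of the subgroups of $P$ the subgroups $A$ and $B$ are comparable; without loss of generality $A\subseteq B$.

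Now, since $Q_1\neq Q_2$, the subgroup $Q_1$ appears among the factors defining $B$, hence $B\subseteq Q_1$, and therefore $A\subseteq B\subseteq Q_1$. On the other hand, intersection independence applied to the member $Q_1$ says precisely that $Q_1$ does not contain $\bigcap_{K\in S\setminus\{Q_1\}}K=A$, i.e.\ $A\not\subseteq Q_1$. This contradiction forces $|S|\le 2$. (In the remaining case $B\subseteq A$ one argues symmetrically: $A\subseteq Q_2$ follows from $Q_2\neq Q_1$, so $B\subseteq A\subseteq Q_2$, contradicting intersection independence at $Q_2$.) There is no genuine obstacle here; the whole content lies in the observation that replacing $S$ by the two sub-intersections $\bigcap_{K\neq Q_1}K$ and $\bigcap_{K\neq Q_2}K$ inside the cyclic $p$-group $P$ forces a comparison that intersection independence prohibits.
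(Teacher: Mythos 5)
Your proof is correct, and it rests on exactly the same key fact as the paper's: the subgroups of a cyclic group of prime power order form a chain, so two subgroups of $P$ must be comparable, and comparability produces a containment that intersection independence forbids. The only (cosmetic) difference is that you compare the two all-but-one intersections $A,B\subseteq P$ by contradiction, while the paper compares the traces $L\cap H$ for $L\in S$ and picks $J$ with $J\cap H$ minimal to conclude directly that $S=\{H,J\}$.
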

\begin{proof} 
Let $S$ be an intersection independent set of subgroups of a group $G$ with $|S|\ge 2$ and $H\in S$ cyclic with $|H|$ a prime power. Take $J\in S$ with $J\cap H$ of minimal possible order. 
Since the lattice of subgroups of $H$ is a chain, for any $L\in S$ we have $L\cap H\supseteq H\cap J$, hence in particular $L\supseteq H\cap J$. Since $S$ is intersection independent, it follows that $L\in \{H,J\}$. As $L\in S$ was arbitrary, it means that 
$S=\{H,J\}$.  
\end{proof} 

\subsection{The use of automorphisms}
The natural action of the automorphism group of $G$ on the isomorphism classes 
of representations of $G$ helps to save some computations later, 
thanks to the following statements: 

\begin{lemma}\label{lemma:auto}
Let $(V,\rho)$ be a $\field G$-module and $\alpha$ an automorphism of the group $G$. 
\begin{itemize} 
\item[(i)] For any $\chi\in \widehat G$ we have 
\[\field[(V,\rho)]^{G,\chi}= 
\field[(V,\rho\circ\alpha)]^{G,\chi\circ\alpha}.\]  
In particular, 
\[\field[(V,\rho)]^G= 
\field[(V,\rho\circ\alpha)]^G.\] 
\item[(ii)] We have 
\[\beta(G,(V,\rho))=\beta(G,(V,\rho\circ\alpha))
\text{ and }\sepbeta(G,(V,\rho))=\sepbeta(G,(V,\rho\circ\alpha)).\]
\item[(iii)] Assume that $(V,\rho)\cong (V,\rho\circ\alpha)$ as $\field G$-modules, so there exists a 
$\field$-vector space isomorphism $T:V\to V$ with $T\circ \rho(g)=(\rho\circ\alpha)(g)\circ T$ 
for all $g\in G$. 
Then the map $f\mapsto f\circ T$ induces a $\field$-vector space isomorphism 
\[\field[(V,\rho)]^{G,\chi}\cong \field[(V,\rho)]^{G,\chi\circ\alpha}.\] 
\end{itemize}
\end{lemma}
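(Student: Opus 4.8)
The plan is to reduce the whole statement to unwinding the definition of a relative invariant, exploiting only that $\alpha$ is a bijection of $G$ onto itself. Recall the conventions of Section~\ref{subsec:convention}: $(g\cdot f)(v)=f(g^{-1}v)$, so that $f\in\field[V]$ is a relative invariant of weight $\chi$ for the action $\rho$ exactly when $f(\rho(g)v)=\chi(g)f(v)$ for all $g\in G$ and $v\in V$. For part (i) I would note first that $(V,\rho)$ and $(V,\rho\circ\alpha)$ have the same underlying coordinate ring $\field[V]$, and that $f\in\field[(V,\rho\circ\alpha)]^{G,\chi\circ\alpha}$ says $f(\rho(\alpha(g))v)=\chi(\alpha(g))f(v)$ for all $g\in G$; since $g\mapsto\alpha(g)$ runs over all of $G$, this is verbatim the condition defining $\field[(V,\rho)]^{G,\chi}$. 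Taking $\chi$ to be the trivial character gives the asserted equality of invariant rings.

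For part (ii) the claim about $\beta$ is immediate: by part (i) the graded subalgebras $\field[(V,\rho)]^G$ and $\field[(V,\rho\circ\alpha)]^G$ of $\field[V]$ coincide, hence they have the same minimal generating degree. For $\sepbeta$ one only has to add the observation that $\rho$ and $\rho\circ\alpha$ induce the same partition of $V$ into $G$-orbits, because the $(\rho\circ\alpha)$-orbit of $v$ equals $\{\rho(\alpha(g))v\mid g\in G\}=\{\rho(h)v\mid h\in G\}$. Hence, for every $d$, the set $\field[V]^G_{\le d}$ is a separating set for the $\rho$-action if and only if it is a separating set for the $(\rho\circ\alpha)$-action, so the two separating Noether numbers agree.

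For part (iii) the idea is that precomposition with the intertwiner $T$ transports relative invariants along the twist by $\alpha$. Given $f\in\field[(V,\rho)]^{G,\chi}$, the relation $T\circ\rho(g)=\rho(\alpha(g))\circ T$ yields, for all $g\in G$ and $v\in V$, $(f\circ T)(\rho(g)v)=f(\rho(\alpha(g))Tv)=\chi(\alpha(g))\,(f\circ T)(v)$, so $f\circ T\in\field[(V,\rho)]^{G,\chi\circ\alpha}$; the assignment $f\mapsto f\circ T$ is manifestly $\field$-linear and degree-preserving because $T$ is linear. From $T\circ\rho(g)=\rho(\alpha(g))\circ T$ one obtains $T^{-1}\circ\rho(h)=\rho(\alpha^{-1}(h))\circ T^{-1}$ for all $h\in G$, and running the same computation with $\alpha^{-1}$ and $T^{-1}$ in place of $\alpha$ and $T$ shows that $f\mapsto f\circ T^{-1}$ maps $\field[(V,\rho)]^{G,\chi\circ\alpha}$ into $\field[(V,\rho)]^{G,\chi}$; as this is a two-sided inverse of $f\mapsto f\circ T$, part (iii) follows.

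I do not expect a genuine obstacle: the lemma is a bookkeeping exercise in which bijectivity of $\alpha$ is used at every turn. The only thing demanding care is keeping $\chi$ versus $\chi\circ\alpha$ straight — and, in part (iii), the direction of the intertwining relation and hence of the twist — so that each substitution is applied on the correct side; once the conventions of Section~\ref{subsec:convention} are fixed, every part is a one- or two-line verification.
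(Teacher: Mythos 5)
Your proposal is correct and follows essentially the same route as the paper: part (i) by observing that $\rho(G)=(\rho\circ\alpha)(G)$ and reindexing $g\mapsto\alpha(g)$, part (ii) as an immediate consequence together with the equality of orbit partitions, and part (iii) by the one-line computation $(f\circ T)(\rho(g)v)=f(\rho(\alpha(g))Tv)=(\chi\circ\alpha)(g)(f\circ T)(v)$. The only difference is that you spell out the inverse map $f\mapsto f\circ T^{-1}$, which the paper leaves implicit.
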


\begin{proof} (i) and (ii): The subset $\rho(G)$ of $\mathrm{GL}(V)$ coincides with the subset 
$(\rho\circ\alpha)(G)$ of $\mathrm{GL}(V)$, therefore the representations 
$\rho$ and $\rho\circ\alpha$ yield the same partition of $V$ into the union of $G$-orbits, 
and the same polynomials in $\field[V]$ are invariants (respectively relative invariants) 
for the $G$-action given by $\rho$ as for the $G$-action given by $\rho\circ\alpha$.  
Moreover, if $f\in \field[(V,\rho)]^{G,\chi}$, $g\in G$ and $v\in V$, 
then $f((\rho\circ\alpha)(g)(v))=f(\rho(\alpha(g))(v)=\chi(\alpha(g))f(v)=(\chi\circ\alpha)(g)f(v)$. 
So if a relative invariant $f\in \field[V]$ has weight $\chi$ with respect to $\rho$, 
then it has weight $\chi\circ\alpha$ with respect to $\rho\circ\alpha$. 
This explains both statements. 

(iii): For each $g\in G$ we have 
\[(f\circ T)(gv)=f((T\circ\rho(g))(v))
=f((\rho\circ\alpha)(g)(T(v))
=\chi(\alpha(g))f(T(v))=
(\chi\circ\alpha)(g)(f\circ T)(v). \]
\end{proof}

\subsection{The group $\mathrm{Dic}_8\times \mathrm{C}_2$} 

In this section 
\[G=\mathrm{Dic}_8\times \mathrm{C}_2=\langle a,b\mid a^4=1,\ b^2=a^2,\ ba=a^3b\rangle 
\times \langle c\mid c^2=1\rangle\] 
is the direct product of the quaternion group of order $8$ and the group of order $2$.  
Assume that $\field$ has an element $\mathrm{i}$ of multiplicative order $4$. 
Then $G$ has two irreducible $2$-dimensional representations (up to isomorphism), 
namely 
\[ \psi_1: a\mapsto 
\begin{bmatrix} 
      \mathrm{i}& 0 \\
      0 & -\mathrm{i}\\
   \end{bmatrix}, \quad 
   b\mapsto \begin{bmatrix} 
      0 & -1 \\
      1 & 0 \\
   \end{bmatrix}, \quad 
   c\mapsto \begin{bmatrix} 
      1 & 0 \\
      0 & 1 \\
   \end{bmatrix} \]
(this is the only $2$ dimensional representation of $\mathrm{Dic}_8$ lifted to $G$ by the projection 
from $G$ to its direct factor $\mathrm{C}_2$), 
and 
\[\psi_2: a\mapsto 
\begin{bmatrix} 
      \mathrm{i}& 0 \\
      0 &-\mathrm{i}\\
   \end{bmatrix}, \quad 
   b\mapsto \begin{bmatrix} 
      0 & -1 \\
      1 & 0 \\
   \end{bmatrix}, \quad 
   c\mapsto \begin{bmatrix} 
      -1 & 0 \\
      0 & -1 \\
   \end{bmatrix},\]
   the representation $\psi_1$ tensored with the non-trivial representation  $c\mapsto -1$ of $\mathrm{C}_2$.  
   The other irreducible representations of $G$ are $1$-dimensional, and can be labeled by 
  the elements of the subgroup $\widehat G:=\{\pm1\}\times \{\pm1\}\times\{ \pm1\}$ of $\field^\times\times \field^\times\times \field^\times$ as follows: 
  identify $\chi=(\chi_1,\chi_2,\chi_3)\in \widehat G$ with the representation $\chi:G\to \field^\times$ given by 
  \[\chi:a\mapsto \chi_1,\quad b\mapsto \chi_2, \quad c\mapsto \chi_3.\] 
 Denote by $W_j$ the vector space $\field^2$ endowed with the  representation $\psi_j$ for $j=1,2$, and for $\chi\in \widehat G$ denote by $U_\chi$ the vector space $\field$ endowed with the  representation $\chi$. Set 
 \[U:=\bigoplus_{\chi\in \widehat G}U_\chi.\]  
 The coordinate functions on $W_1$ are denoted by $x_1,x_2$, the coordinate functions 
 on $W_2$ are $y_1,y_2$, and the coordinate function on $U_\chi$ is $t_\chi$ for $\chi\in \widehat G$ 
 (see Section~\ref{subsec:convention}). 
 
\begin{proposition}\label{prop:Vi+U}
For $i=1,2$ we have 
$\beta(G,W_i \oplus U)=6$. 
\end{proposition}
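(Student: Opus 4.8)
The plan is to reduce to $i=1$ via an automorphism of $G$, to derive $\beta(G,W_1\oplus U)\ge 6$ from a retraction onto the $2$-dimensional summand, and to prove $\beta(G,W_1\oplus U)\le 6$ by a case analysis on multihomogeneous invariants. Write $W:=W_1$ and $R:=\field[W]=\field[x_1,x_2]$ with its $\mathrm{Dic}_8$-action; as $c$ acts trivially on $W$ we have $\field[W]^G=R^{\mathrm{Dic}_8}$, and I identify $\widehat G$ with $\mathbb{F}_2^3$. For the reduction, $a^2$ is a central involution and $\langle ca^2\rangle$ is a complement to the direct factor $\mathrm{Dic}_8$, so $a\mapsto a,\ b\mapsto b,\ c\mapsto ca^2$ extends to an automorphism $\alpha$ of $G$ with $\psi_1\circ\alpha=\psi_2$ (since $\psi_1(ca^2)=\psi_1(a)^2=-\mathrm{id}$); as $\alpha$ permutes $\widehat G$ we get $(W_1\oplus U)\circ\alpha\cong W_2\oplus U$ as $\field G$-modules, hence $\beta(G,W_2\oplus U)=\beta(G,W_1\oplus U)$ by Lemma~\ref{lemma:auto}(ii), and it suffices to treat $i=1$. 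For the lower bound, setting all $t_\chi=0$ gives a degree-preserving $G$-equivariant retraction $\field[W\oplus U]^G\to R^{\mathrm{Dic}_8}$ split by the inclusion, so $\beta(G,W\oplus U)\ge\beta(\mathrm{Dic}_8,W)$; a short computation (first $\langle a\rangle$-invariants, then $\langle b\rangle$-invariants) yields $R^{\mathrm{Dic}_8}=\field[(x_1x_2)^2,\ x_1^4+x_2^4,\ x_1x_2(x_1^4-x_2^4)]$ with no invariants in degrees $1,2,3,5$, so its degree-$6$ generator is indecomposable and $\beta(\mathrm{Dic}_8,W)=6$.

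For the upper bound I would prove that every multihomogeneous invariant $f=f_1(x_1,x_2)\cdot m$ with $m=\prod_{\chi}t_\chi^{n_\chi}$ and $\deg f\ge 7$ is decomposable, where $f_1$ is a relative invariant of some weight $\lambda\in\widehat G$, $m$ has weight $\lambda^{-1}$, and $\lambda(c)=1$ because $R^{G,\psi}=0$ when $\psi(c)\neq1$. The routine cases are: (i) $\deg f_1=0$: then $\deg m\ge 7>4=\mathsf{D}(\widehat G)$, so the $G$-invariant monomial $m$ is a product of two positive-degree invariant monomials; (ii) $\deg f_1\ge1$, $\lambda=1$: then $m$ is invariant, and either $m=1$, when $f=f_1$ has degree $>6$ and is decomposable inside $R^{\mathrm{Dic}_8}$, or $m\neq1$, when $f=f_1\cdot m$; (iii) $\deg f_1\ge 5$, $\lambda\neq1$: using that the module of weight-$\lambda$ relative invariants of $\mathrm{Dic}_8$ in $R$ is generated over $R^{\mathrm{Dic}_8}$ in degrees $\le 4$ for each nontrivial $\lambda$ — the generators being $x_1^2-x_2^2,\ x_1x_2,\ x_1^2+x_2^2$ in degree $2$ and $x_1^4-x_2^4,\ x_1x_2(x_1^2-x_2^2),\ x_1x_2(x_1^2+x_2^2)$ in degree $4$ — we write $f_1=\sum_j h_jg_j$ with $h_j$ weight-$\lambda$ of degree $\le4$ and $g_j\in R^{\mathrm{Dic}_8}$ of positive degree, so $f=\sum_j(h_jm)g_j$ is decomposable. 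This leaves the case $\lambda\neq1$, $1\le\deg f_1\le 4$, whence $\deg m\ge 3$: if $m$ has a nonempty zero-sum subsequence $m_2\neq m$ (with respect to $\widehat G$), then $f=(f_1\,(m/m_2))\cdot m_2$ is decomposable; otherwise $m$ has no nonempty zero-sum subsequence at all (its own weight $\lambda^{-1}$ being nonzero), so the characters occurring in $m$ are distinct, nontrivial, and $\mathbb{F}_2$-independent, forcing $\deg m\le 3$. Hence $\deg m=3$, $\deg f_1=4$, $\deg f=7$, and $m=t_{\chi_1}t_{\chi_2}t_{\chi_3}$ for a basis $\{\chi_1,\chi_2,\chi_3\}$ of $\widehat G$.

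In this residual configuration $\chi_1\chi_2\chi_3=\lambda^{-1}$ satisfies $(\chi_1\chi_2\chi_3)(c)=1$, and since three characters with $c$ in their kernel cannot be independent, exactly two of the $\chi_i$ — say $\chi_1,\chi_2$ — are nontrivial on $c$; then $\chi_3$ is a character of $\mathrm{Dic}_8$, nontrivial (no length-$1$ zero-sum subsequence), and $\chi_3\neq\lambda$ (else $\chi_1=\chi_2$), so $\chi_3$ is one of the two nontrivial characters $\mu,\nu$ of $\mathrm{Dic}_8$ distinct from $\lambda$. The crucial point is that the unique (up to scalar) degree-$4$ relative invariant $f_1$ of weight $\lambda$ factors as a product $g\cdot h$ of the degree-$2$ relative invariants of weights $\mu$ and $\nu$, namely $x_1^4-x_2^4=(x_1^2-x_2^2)(x_1^2+x_2^2)$ and $x_1x_2(x_1^2\mp x_2^2)=(x_1x_2)(x_1^2\mp x_2^2)$ (here $\mu\nu=\lambda$, the product of two distinct nontrivial characters of $\mathrm{Dic}_8$ being the third). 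Matching this factorization with the splitting of $m$ into the singleton $t_{\chi_3}$ and the pair $t_{\chi_1}t_{\chi_2}$ — whose weights are, in some order, exactly $\mu$ and $\nu$ because $\chi_3\in\{\mu,\nu\}$ — gives $f=(g\,m_g)(h\,m_h)$ with $g\,m_g$ and $h\,m_h$ both $G$-invariant of positive degree, so $f$ is decomposable. This completes the case analysis, giving $\beta(G,W_1\oplus U)\le 6$ and hence, with the lower bound, the asserted value (also for $i=2$). I expect the bookkeeping of this last case — coordinating the values at $c$ of $\chi_1,\chi_2,\chi_3$, the constraint $\chi_3\neq\lambda$, and the weight-matching between the two parts of $m$ and the two factors of $f_1$ — to be the main obstacle; the remaining nonformal input, the explicit description of the invariants and low-degree relative invariants of $\mathrm{Dic}_8$ on $\field^2$, is a routine two-variable computation.
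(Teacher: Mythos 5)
Your proof is correct and rests on the same two ingredients as the paper's: the fact that for each nontrivial weight the $\mathrm{Dic}_8$-relative invariants in $\field[x_1,x_2]$ are generated over the invariant ring in degrees $2$ and $4$ (the paper extracts this from a Gr\"obner basis of the Hilbert ideal and the resulting $G$-module complement), and the Davenport constant $\mathsf{D}(\widehat G)=\mathsf{D}(\mathrm{C}_2^3)=4$ bounding the number of $t$-factors. The only cosmetic differences are that you transfer the case $i=2$ to $i=1$ via the automorphism $c\mapsto ca^2$ (the paper simply redoes the computation with $y_1,y_2$), and that you settle the residual degree-$7$ configuration by explicitly factoring the unique degree-$4$ relative invariant and matching the factors with a splitting of the $t$-monomial, where the paper invokes its count of ``at most two $s$-factors and at most four factors in total.''
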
 
\begin{proof} First we deal with the case $i=1$. 
It is well known (see for example the proof of Proposition~\ref{prop:betasep-index2}) that 
 \begin{equation}\label{eq:Dic8 mingen}
 \field[W_1]^G=\field[(x_1x_2)^2,\ x_1^4+x_2^4,\ x_1x_2(x_1^4-x_2^4)].\end{equation} 
 Denote by $\mathcal{H}(G,W_1)$ the ideal in $\field[W_1]$ generated by the 
 above three  generators of $\field[W_1]^G$ (called the 
\emph{Hilbert ideal}). 
 It is easy to verify that a Gr\"obner basis with respect to the lexicographic 
 term order induced by 
 $x_1>x_2$ of the Hilbert ideal $\mathcal{H}(G,W_1)$ is 
 $x_1^2x_2^2$, $x_1^4+x_2^4$, $x_2^6$, $x_1x_2^5$. 
Using this Gr\"obner basis one can show that setting
 \[s_{(1,-1,1)}:=x_1x_2,\quad s_{(-1,1,1)}:=x_1^2+x_2^2, \quad s_{(-1,-1,1)}:=x_1^2-x_2^2,\]
  a $\field G$-module direct complement in $\field[x_1,x_2]$ of the Hilbert ideal 
 $\mathcal{H}(G,W_1)$ is 
 \goodbreak
\begin{align*}\field\oplus\mathrm{Span}_\field\{x_1,x_2\}\oplus \field s_{(1,-1,1)}\oplus \field s_{(-1,1,1)}\oplus 
\field s_{(-1,-1,1)}\oplus \mathrm{Span}_\field\{x_1^3,x_2^3\}
\\ \oplus \mathrm{Span}_\field\{x_1^2x_2,x_1x_2^2\} 
\oplus \field s_{(1,-1,1)}s_{(-1,1,1)}\oplus \field s_{(1,-1,1)}s_{(-1,-1,1)}
\\ \oplus \field s_{(-1,1,1)}s_{(-1,-1,1)}
\oplus \mathrm{Span}_\field\{x_1^5,x_2^5\}.  
\end{align*}
The direct summands above are minimal $G$-invariant subspaces,  
and $s_\chi\in \field[W_1]^{G,\chi}$. Recall 
that every multihomogeneous element of $\field[W_1\oplus U]^G$ is of the 
form $ft$, where $f\in \field[W_1]$ and 
$t=\prod_{\chi\in \widehat G}t_\chi^{n_\chi}$ 
is a monomial. 
It follows 
that $\field[W_1\oplus U]^G$ is generated over 
$\field[W_1]^G$ by products of elements from 
\begin{equation}\label{eq:s_chi}
\{s_\chi, \quad t_{\chi'}\mid \chi\in\{(1,-1,1),(-1,1,1),(-1,-1,1)\}, \ \chi'\in \widehat G\},
\end{equation} 
with no $G$-invariant proper subproduct. 
Moreover, it is sufficient to consider products from \eqref{eq:s_chi} 
with at most two factors of the form $s_\chi$, 
because any product of three factors of the form $s_\chi$ 
can be rewritten modulo the Hilbert ideal $\mathcal{H}(G,W_1)$ as a linear combination of shorter products 
of the $s_\chi$. 
Any unordered sequence of $5$ elements from $\widehat G$ has a proper product-one 
subsequence,  
since the Davenport constant of the group $\widehat G$ is $4$ 
(for information about the Davenport constant and its relevance for 
the invariant theory of abelian groups see for example \cite{cziszter-domokos-geroldinger}). 
Thus the algebra $\field[W_1\oplus U]^G$ is generated by 
the generators of $\field[W_1]^G$ 
and the products in \eqref{eq:s_chi} with at most two factors of the form $s_\chi$,  
and with at most $4$ factors in total. All these polynomials have degree at most $6$. 

The case $i=2$ is essentially the same. Indeed, 
the map $x_1\mapsto y_1$, $x_2\mapsto y_2$ extends to a $\field$-algebra  isomorphism between $\field[W_1]$ and $\field[W_2]$, which is also a $\mathrm{Dic}_8$-module isomorphism. The subgroup $\langle c\rangle$ acts trivially on $\field[W_1]$, and acts by multiplication by $-1$ of the variables $y_1,y_2$ in $\field[W_2]$. 
Therefore replacing $x_1$ by $y_1$ and $x_2$ by $y_2$ in the corresponding formulae 
above, we get 
the generators of 
$\field[W_2]^G$, 
as well as a direct sum decomposition into minimal $G$-invariant subspaces of a $\field G$-module direct complement in $\field[W_2]$ of the Hilbert ideal. 
So in the same way as in the above paragraph, we get the conclusion  
$\beta(W_2\oplus U)=6$.
\end{proof}

\begin{proposition} \label{prop:Q8xC2,V1+V2} 
We have $\beta(G,W_1\oplus W_2)=6$. 
\end{proposition}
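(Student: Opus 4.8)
The plan is to establish both inequalities $\beta(G,W_1\oplus W_2)\ge 6$ and $\beta(G,W_1\oplus W_2)\le 6$; the first is short, the second is the substance.

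For the lower bound, note that by \eqref{eq:Dic8 mingen} every nonconstant invariant in $\field[W_1]^G$ has degree at least $4$, so every decomposable positive-degree invariant there has degree at least $8$; since $x_1x_2(x_1^4-x_2^4)$ witnesses $\field[W_1]^G_6\neq 0$, the algebra $\field[W_1]^G$ has an indecomposable invariant of degree $6$. The projection $W_1\oplus W_2\to W_1$ gives a $G$-equivariant retraction $\field[W_1\oplus W_2]\twoheadrightarrow\field[W_1]$ (setting $y_1,y_2$ to zero), under which a decomposition in $\field[W_1\oplus W_2]^G$ of an element of $\field[W_1]^G$ would restrict to a decomposition in $\field[W_1]^G$; hence $\beta(G,W_1\oplus W_2)\ge\beta(G,W_1)=6$.

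For the upper bound I would argue along the lines of the proof of Proposition~\ref{prop:Vi+U}, using the bigrading of $\field[W_1\oplus W_2]$ by degree in $x_1,x_2$ and degree in $y_1,y_2$. Since $c$ is central, acts trivially on $\field[W_1]$, and acts on $\field[W_2]_q$ by $(-1)^q$, any multihomogeneous invariant of bidegree $(p,q)$ has $q$ even, and the space of such invariants is $(\field[W_1]_p\otimes\field[W_2]_q)^G$. The key structural observation is that, for $q$ even, every irreducible $G$-constituent of $\field[W_2]_q$ is one-dimensional and trivial on $\langle c\rangle$: indeed $c$ acts trivially because $q$ is even, and a constituent isomorphic to the lift $W_1$ of the $2$-dimensional irreducible of $\mathrm{Dic}_8$ would have to contain an eigenvector of $a$ whose eigenvalue is a primitive $4$-th root of unity, which never occurs in even degree. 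Consequently the only irreducible $G$-module $\lambda$ that occurs in $\field[W_1]_p$ with $\lambda^*$ occurring in $\field[W_2]_q$ is one-dimensional; passing to isotypic components, $(\field[W_1]_p\otimes\field[W_2]_q)^G$ is spanned by products $RS$ with $R\in\field[W_1]^{G,\chi}$ and $S\in\field[W_2]^{G,\chi}$ for the characters $\chi\in\widehat G$ trivial on $\langle c\rangle$ (recall that then $\chi^{-1}=\chi$, so such products are indeed $G$-invariant). Thus $\field[W_1\oplus W_2]^G$ is generated by $\field[W_1]^G$, by $\field[W_2]^G$, and by such products $RS$ with $\chi$ one of the three nontrivial characters trivial on $\langle c\rangle$.

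Finally, $\field[W_1]^G$ and $\field[W_2]^G$ are generated in degrees at most $6$ (by \eqref{eq:Dic8 mingen} and its image under $x_i\mapsto y_i$), while by the decomposition of $\field[W_1]$ modulo its Hilbert ideal recalled in the proof of Proposition~\ref{prop:Vi+U} the $\field[W_1]^G$-module $\field[W_1]^{G,\chi}$ is generated by the primitive relative invariants of weight $\chi$, which occur only in degree $2$ (the element $s_\chi$) and degree $4$ (the element $s_{\chi'}s_{\chi''}$, where $\{\chi,\chi',\chi''\}$ is the set of the three nontrivial characters trivial on $\langle c\rangle$); the same holds for $\field[W_2]$ with the $y$-analogues $s'_\chi$ of the $s_\chi$. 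Hence it suffices to bound the degrees of the products $PQ$ where $P$ and $Q$ are primitive relative invariants of a common weight $\chi$ for $\field[W_1]$ and for $\field[W_2]$ respectively. The only such products of degree greater than $6$ have $\deg P=\deg Q=4$, and then $PQ=(s_{\chi'}s_{\chi''})(s'_{\chi'}s'_{\chi''})=(s_{\chi'}s'_{\chi'})(s_{\chi''}s'_{\chi''})$ is the product of two $G$-invariants of degree $4$, hence decomposable. Therefore $\field[W_1\oplus W_2]^G$ is generated in degrees at most $6$, and with the lower bound this yields $\beta(G,W_1\oplus W_2)=6$. The main obstacle is the structural reduction of the previous paragraph — ruling out the mixed invariants built from the $2$-dimensional covariants and thereby reducing to products of one-dimensional relative invariants; once that is in place, the remaining degree bookkeeping, in particular the factorisation of the degree $8$ products, is straightforward.
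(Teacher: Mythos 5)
Your proof is correct, but your upper bound follows a genuinely different route from the paper's. The lower bound is the same in both: restriction along the retraction $\field[W_1\oplus W_2]\twoheadrightarrow\field[W_1]$ gives $\beta(G,W_1\oplus W_2)\ge\beta(G,W_1)=6$, the degree~$6$ generator in \eqref{eq:Dic8 mingen} being indecomposable. For the upper bound the paper argues in two lines: $a^2$ acts as $-\mathrm{id}$ on $W_1\oplus W_2$, so every invariant has even degree and $\beta(G,W_1\oplus W_2)$ is even; combined with the externally quoted value $\beta^\field(G)=7$ from \cite{cziszter-domokos-szollosi} this forces $\beta(G,W_1\oplus W_2)\le 6$. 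You instead compute a generating set directly: the observation that for $q$ even every irreducible constituent of $\field[W_2]_q$ is a one-dimensional character trivial on $\langle c\rangle$ (no eigenvalue of $a$ of multiplicative order $4$ occurs in even degree) reduces all mixed multihomogeneous invariants to products of relative invariants of self-inverse weight, and the module generators of $\field[W_1]^{G,\chi}$ and $\field[W_2]^{G,\chi}$ read off from the complement of the Hilbert ideal in the proof of Proposition~\ref{prop:Vi+U} then yield generators of degree at most $6$, once the degree~$8$ products are discarded via the factorisation $(s_{\chi'}s_{\chi''})(s'_{\chi'}s'_{\chi''})=(s_{\chi'}s'_{\chi'})(s_{\chi''}s'_{\chi''})$ into invariants of degree $4$. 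Your argument is longer but self-contained and produces an explicit generating set of $\field[W_1\oplus W_2]^G$, whereas the paper's is shorter but leans on the nontrivial external input $\beta^\field(G)=7$; both are valid.
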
    

\begin{proof} 
The element  $a^2\in G$ multiplies each of the indeterminates $x_1,x_2,y_1,y_2$ by $-1$. 
Since $G$-invariants are $a^2$-invariant, they must involve only monomials of even degree. 
Thus  $\beta(G,W_1\oplus W_2)$ is even. On the other hand, $\beta^\field(G)=7$ by 
\cite[Proposition 3.2]{cziszter-domokos-szollosi}. We conclude that 
$\beta(G,W_1\oplus W_2)\le 6$. 
Note also that $\beta(G,W_1\oplus W_2)\ge 
\beta(G,W_1)$, and we saw in the course of the proof of Proposition~\ref{prop:Vi+U} 
that $\beta(G,W_1)=6$ (the third generator in 
\eqref{eq:Dic8 mingen} is not symmetric in $x_1$ and $x_2$, hence can not be expressed by the first two). 
\end{proof}    

\begin{proposition}\label{prop:ralative invariants Q8xC2} 
Let $v:=(w_1,w_2)\in W_1\oplus W_2$ with $w_1\neq 0$ and $w_2\neq 0$. 
Then for any $\chi\in \widehat G$ there exists 
a relative invariant $f$ of weight $\chi$ such that $\deg(f)\le 4$ and $f(v)\neq 0$. 
\end{proposition}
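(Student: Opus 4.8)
The plan is to reduce the list of weights to only four, using automorphisms of $G$, and then for each of those four to exhibit a small explicit family of relative invariants of degree at most $4$ whose common vanishing at a point $(w_1,w_2)$ would force $w_1=0$ or $w_2=0$. For the reduction, recall that $\mathrm{Aut}(\mathrm{Dic}_8)$ maps onto the symmetric group permuting the three index-two cyclic subgroups $\langle a\rangle,\langle b\rangle,\langle ab\rangle$ of $\mathrm{Dic}_8$, equivalently permuting the three non-trivial characters of $\mathrm{Dic}_8$; extending such an automorphism by the identity on the direct factor $\langle c\rangle$ yields an automorphism $\alpha$ of $G$. Since $\psi_1$ and $\psi_2$ each restrict on $\mathrm{Dic}_8$ to its unique $2$-dimensional irreducible representation and are unchanged on $\langle c\rangle$, we have $(W_1\oplus W_2,\rho)\cong (W_1\oplus W_2,\rho\circ\alpha)$, and as $W_1\not\cong W_2$ any intertwiner $T$ as in Lemma~\ref{lemma:auto}(iii) maps $W_1$ onto $W_1$ and $W_2$ onto $W_2$, hence sends a point with both components nonzero to another such point. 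So by Lemma~\ref{lemma:auto}(iii) it suffices to prove the proposition for one character in each $\mathrm{Aut}(G)$-orbit on $\widehat G$; these orbits are $\{(1,1,1)\}$, the set $\{(-1,1,1),(1,-1,1),(-1,-1,1)\}$, the singleton $\{(1,1,-1)\}$, and the set $\{(-1,1,-1),(1,-1,-1),(-1,-1,-1)\}$. Thus it is enough to treat $\chi\in\{(1,1,1),\,(-1,1,1),\,(1,1,-1),\,(-1,1,-1)\}$.

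For the two weights trivial on $\langle c\rangle$ one works inside $\field[x_1,x_2]$, on which $\langle c\rangle$ acts trivially; write $w_1=(p_1,p_2)$. For $\chi=(1,1,1)$ I would use the $G$-invariants $(x_1x_2)^2$ and $x_1^4+x_2^4$: if both vanish at $(w_1,w_2)$ then $p_1p_2=0$ and $p_1^4+p_2^4=0$, forcing $w_1=0$. For $\chi=(-1,1,1)$ I would use the relative invariants $x_1^2+x_2^2$ and $x_1x_2(x_1^2-x_2^2)$, both of weight $(-1,1,1)$: if both vanish at $(w_1,w_2)$ then $p_1^2+p_2^2=0$, so $p_1^2-p_2^2=-2p_2^2$, whence $p_1p_2^3=0$ and again $w_1=0$ (here $\mathrm{char}(\field)\neq2$ is used, which holds by the running assumption since $2\mid|G|$). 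Since $w_1\neq0$ by hypothesis, in each case one of the two listed relative invariants, which has degree at most $4$, is nonzero at $(w_1,w_2)$.

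For the two weights non-trivial on $\langle c\rangle$ the hypotheses $w_1\neq0$ and $w_2\neq0$ are both genuinely needed (every relative invariant of such a weight of degree $\le4$ has positive degree in $x_1,x_2$ and positive degree in $y_1,y_2$). For $\chi=(1,1,-1)$ I would use $x_1y_2-x_2y_1$ (degree $2$) together with $(x_1^2+x_2^2)(x_1y_1+x_2y_2)$ and $(x_1^2-x_2^2)(x_1y_1-x_2y_2)$ (degree $4$), all of weight $(1,1,-1)$. If the first does not vanish at $(w_1,w_2)$ we are done; otherwise $w_1$ and $w_2$ are proportional, say $w_2=\lambda w_1$ with $\lambda\neq0$, and the two degree-$4$ invariants take the values $\lambda(p_1^2+p_2^2)^2$ and $\lambda(p_1^2-p_2^2)^2$ at $(w_1,w_2)$, which cannot both vanish unless $w_1=0$. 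For $\chi=(-1,1,-1)$ I would use $x_1y_1+x_2y_2$ (degree $2$) together with $x_1x_2(x_1y_1-x_2y_2)$ and $(x_1^2-x_2^2)(x_1y_2+x_2y_1)$ (degree $4$), all of weight $(-1,1,-1)$. If the first vanishes at $(w_1,w_2)$, then $w_2$ lies on the line spanned by $(-p_2,p_1)$, say $w_2=\mu(-p_2,p_1)$ with $\mu\neq0$, and the two degree-$4$ invariants take the values $-2\mu p_1^2p_2^2$ and $\mu(p_1^2-p_2^2)^2$ at $(w_1,w_2)$, which cannot both vanish unless $w_1=0$. In all cases one of the listed relative invariants is nonzero at $(w_1,w_2)$, and this completes the proof.

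I expect the third paragraph to be the main obstacle: one must choose the degree-$4$ relative invariants of a given weight so that, \emph{after} the vanishing of the degree-$2$ one has pinned $w_2$ onto a line determined by $w_1$ (the line $\field w_1$ when $\chi=(1,1,-1)$, its ``orthogonal'' line when $\chi=(-1,1,-1)$), their values at $(w_1,w_2)$ become binary forms in the coordinates of $w_1$ with no common zero other than the origin. Producing such a family by hand is the delicate part; everything else is a short direct computation once Lemma~\ref{lemma:auto} has reduced the analysis to the four weights above, and all the relative invariants used are among the $s_\chi$ of Proposition~\ref{prop:Vi+U} and their products.
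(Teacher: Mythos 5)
Your proposal is correct and follows essentially the same route as the paper: reduce to one weight per orbit of $\mathrm{Aut}(\mathrm{Dic}_8)$ acting on $\widehat G$ via Lemma~\ref{lemma:auto}(iii), then exhibit explicit relative invariants of degree at most $4$ for each representative weight whose common vanishing forces $w_1=0$ or $w_2=0$. The only differences are cosmetic (the paper takes $f=1$ for the trivial weight and lists slightly different degree-$4$ relative invariants, e.g. $x_1x_2(x_1y_2+x_2y_1)$ and $x_1^3y_1+x_2^3y_2$ for weight $(1,1,-1)$), and your weight and evaluation computations check out.
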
 

\begin{proof} 
\[\begin{array}{c|c}
\chi & \text{relative invariants} \\ \hline \hline
(-1,1,1) & f_1:=x_1^2+x_2^2, \quad f_2:=x_1x_2(x_1^2-x_2^2) \\ \hline 
(-1,1,-1) & f_1:=x_1y_1+x_2y_2,\quad f_2:=x_1x_2(x_1y_1-x_2y_2), 
\quad f_3:=x_2^3y_1-x_1^3y_2 
\\ \hline 
(1,1,-1) & f_1:=x_1y_2-x_2y_1,\quad f_2:=x_1x_2(x_1y_2+x_2y_1), \quad f_3:=x_2^3y_2+x_1^3y_1
\end{array}\]  
A row in the above table contains a character $\chi\in \widehat G$ and relative invariants 
of weight $\chi$ of degree at most $4$, such that the common zero-set of these 
relative invariants is contained in the union of $W_1\oplus \{0\}$ and $W_2\oplus \{0\}$. 
For the other non-trivial weights the result can be deduced without further computation using 
Lemma~\ref{lemma:auto}. Indeed, 
the automorphism group of $G$ contains the automorphism group of 
its subgroup $\mathrm{Dic}_8$ as a subgroup. The automorphism group  of $\mathrm{Dic}_8$ acts 
(on the right) therefore on $\widehat G$ (an automorphism $\alpha$ sends $\chi$ to $\chi\circ\alpha$), 
and any non-trivial $\chi \in \widehat G$ is in the orbit of one of the three weights in the above table. 
Moreover, for an automorphism $\alpha$ of $\mathrm{Dic}_8$ (viewed as an automorphism of $G$) 
we have that $\psi_1\circ \alpha\cong \psi_1$ and $\psi_2\circ\alpha\cong \psi_2$.  
Observe that for $(w_1,w_2)\in W_1\oplus W_2$, the condition that none of 
$w_i$ ($i=1,2)$ is zero is equivalent to the condition that $\{g\cdot (w_1,w_2)\mid g\in G\}$ spans 
$W_1\oplus W_2$ as a $\field$-vector space. 
Therefore by Lemma~\ref{lemma:auto} (iii), no  $(w_1,w_2)$ with $w_1\neq 0$, $w_2\neq 0$ is contained 
in the common zero locus 
$\mathcal{V}(f\mid f\in \field[W_1\oplus W_2]^{G,\chi\circ\alpha}, 
\ \deg(f)\le 4)$ if no such element of $W_1\oplus W_2$ is contained in  
$\mathcal{V}(f\mid f\in \field[W_1\oplus W_2]^{G,\chi}, 
\ \deg(f)\le 4)$. 

Note finally that for the trivial weight $(1,1,1)\in \widehat G$ we may take $f=1\in \field[W_1\oplus W_2]^G$. 
\end{proof} 

\begin{theorem}\label{thm:sepbeta(Dic8xC2)} 
Assume that $\field$ contains an element of multiplicative order $8$.  
Then we have $\sepbeta^\field(\mathrm{Dic}_8\times \mathrm{C}_2)=6$. 
\end{theorem}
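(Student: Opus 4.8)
The plan is to establish the two inequalities $\sepbeta^\field(G)\ge 6$ and $\sepbeta^\field(G)\le 6$ separately, using Propositions~\ref{prop:Vi+U}, \ref{prop:Q8xC2,V1+V2} and \ref{prop:ralative invariants Q8xC2}. The lower bound is immediate: $\mathrm{Dic}_8=\mathrm{Dic}_{4\cdot 2}$ is a (non-cyclic, index-two-cyclic) subgroup of $G$, so subgroup monotonicity \eqref{eq:beta(H)} together with Theorem~\ref{thm:sepbeta index two}, applicable because $\field$ has an element of multiplicative order $8=|\mathrm{Dic}_8|$, gives $\sepbeta^\field(G)\ge\sepbeta^\field(\mathrm{Dic}_8)=\frac{1}{2}\cdot 8+2=6$.

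For the upper bound I would first invoke Lemma~\ref{lemma:multfree}: since the simple $\field G$-modules have dimension at most $2$ and $|\field|>16$ in every relevant case except $\field=\mathbb{F}_9$ (which is reduced to $\overline{\field}$ by Lemma~\ref{lemma:spanning invariants}(iii)), it suffices to prove $\sepbeta(G,V)\le 6$ for the single module $V=W_1\oplus W_2\oplus U$. So let $v=(w_1,w_2,(u_\chi)_\chi)$ and $v'=(w_1',w_2',(u'_\chi)_\chi)$ agree on all $G$-invariants of degree at most $6$; the goal is to place them in one $G$-orbit. The low-degree invariants $(x_1x_2)^2$, $x_1^4+x_2^4$ and their $y$-analogues detect whether $w_1$, respectively $w_2$, is zero, so $w_i=0\iff w_i'=0$. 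If $w_1=0$ (hence $w_1'=0$), project onto the $G$-submodule $W_2\oplus U$ and apply $\beta(G,W_2\oplus U)=6$ from Proposition~\ref{prop:Vi+U}: the degree-$\le 6$ invariants of $W_2\oplus U$ separate, so $(w_2,(u_\chi)_\chi)$ and $(w_2',(u'_\chi)_\chi)$ are $G$-conjugate, whence so are $v,v'$; the case $w_2=0$ is symmetric.

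It then remains to treat the generic stratum $w_1\ne 0$, $w_2\ne 0$. By Proposition~\ref{prop:Q8xC2,V1+V2} we have $\beta(G,W_1\oplus W_2)=6$, so the degree-$\le 6$ invariants of $W_1\oplus W_2$ separate and there is $g_0\in G$ with $g_0\cdot(w_1,w_2)=(w_1',w_2')$. Here one checks $\mathrm{Stab}_G(w_1)=\langle c\rangle=\ker\psi_1$ and $\mathrm{Stab}_G(w_2)=\langle a^2c\rangle=\ker\psi_2$ (the quaternion group acts without nonzero fixed vectors in each of its $2$-dimensional modules), so $\mathrm{Stab}_G(w_1,w_2)=\langle c\rangle\cap\langle a^2c\rangle=\{1\}$ and $g_0$ is the unique such element. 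Now for each $\chi\in\widehat G$ choose, by Proposition~\ref{prop:ralative invariants Q8xC2}, a relative invariant $f_\chi$ of weight $\chi$ with $\deg f_\chi\le 4$ and $f_\chi(w_1,w_2)\ne 0$; since $\chi^2$ is trivial, $f_\chi t_\chi\in\field[V]^G$ with $\deg(f_\chi t_\chi)\le 5$. Evaluating $(f_\chi t_\chi)(v)=(f_\chi t_\chi)(v')$ and using $f_\chi(w_1',w_2')=\chi(g_0)f_\chi(w_1,w_2)$ gives $u'_\chi=\chi(g_0)u_\chi=g_0\cdot u_\chi$. Hence $g_0\cdot v=v'$, so $\sepbeta(G,V)\le 6$, and combining the two bounds yields $\sepbeta^\field(G)=6$.

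Given the three propositions, the only delicate point in assembling the argument is the degenerate stratum where one of $w_1,w_2$ vanishes: there the stabilizer is too large for the uniqueness step, and one escapes by passing to the proper submodule $W_i\oplus U$, on which the Noether number is still $6$. The real content is packaged in the propositions themselves—above all the explicit list of low-degree relative invariants in Proposition~\ref{prop:ralative invariants Q8xC2}, together with the automorphism trick of Lemma~\ref{lemma:auto} that reduces the remaining nontrivial weights to the three tabulated ones.
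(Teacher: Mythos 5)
Your proposal is correct and follows essentially the same route as the paper: lower bound from $\sepbeta^\field(\mathrm{Dic}_8)=6$ by monotonicity, reduction to $V=W_1\oplus W_2\oplus U$ via Lemma~\ref{lemma:spanning invariants}(iii) and Lemma~\ref{lemma:multfree}, the degenerate stratum handled by Proposition~\ref{prop:Vi+U}, and the generic stratum by Proposition~\ref{prop:Q8xC2,V1+V2} together with the degree-$\le 5$ invariants $f_\chi t_\chi$ from Proposition~\ref{prop:ralative invariants Q8xC2}. The only (harmless) cosmetic difference is that you carry the element $g_0$ and the trivial-stabilizer remark explicitly, whereas the paper simply replaces $v'$ by $g_0^{-1}\cdot v'$ so that $w_i=w_i'$ and then reads off $u_\chi=u'_\chi$ directly.
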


\begin{proof} 
By Theorem~\ref{thm:sepbeta index two} 
we know that $\sepbeta^\field(\mathrm{Dic}_8)=6$, and since $\mathrm{Dic}_8$ is a direct factor of $G$, 
we have $\sepbeta^\field(G)\ge \sepbeta^\field(\mathrm{Dic}_8)=6$. 

Now we turn to the reverse inequality. 
By Lemma~\ref{lemma:spanning invariants} (iii) it is sufficient to deal with the case when 
$|\field|$ is large enough so that we can apply Lemma~\ref{lemma:multfree}. 
Therefore it remains to prove that $\sepbeta(G,V)\le 6$ where $V=W_1\oplus W_2\oplus U$. 
That is, we have to show that if 
all invariants of degree at most $6$ take the same value on 
$v,v'\in V$, then $G\cdot v=G\cdot v'$. So 
$v=(w_1,w_2,u_{\chi}\mid \chi\in \widehat G)$ and 
$v'=(w'_1,w'_2,u'_\chi \mid \chi\in \widehat G)$.  
It follows from Proposition~\ref{prop:Q8xC2,V1+V2} that replacing $v'$ by an 
appropriate element in its $G$-orbit, we may assume that $w_1=w'_1$ and $w_2=w'_2$. 
If $w_1=0$ or $w_2=0$, then $v$ and $v'$ belong to the same orbit by Proposition~\ref{prop:Vi+U}. 
For any $\chi\in \widehat G$ there exists a relative invariant 
$f\in \field[W_1\oplus W_2]^{G,\chi^{-1}}$ with $\deg(f)\le 4$ and $f(w_1,w_2)\neq 0$ 
by Proposition~\ref{prop:ralative invariants Q8xC2}. 
Consider the invariant $ft_{\chi}\in \field[V]^G$: it has degree at most $5$, and so from 
$(ft_{\chi})(v)=(ft_{\chi})(v')$ we deduce $u_\chi=u'_\chi$. This holds for all 
$\chi\in\widehat G$, 
thus we showed $v=v'$, as claimed. 
\end{proof}

\subsection{The group $\mathrm{C}_4\rtimes \mathrm{C}_4$} 

In this section 
\[G=\mathrm{C}_4\rtimes \mathrm{C}_4=\langle a,b\mid a^4=1=b^4,\quad 
bab^{-1}=a^3\rangle.\] 
Assume that $\field$ has an element $\mathrm{i}$ of multiplicative order $4$. 
Then $G$ has two irreducible $2$-dimensional representations (up to isomorphism), 
namely 
\[\psi_1:a\mapsto 
\begin{bmatrix} 
      \mathrm{i}& 0 \\
      0 & -\mathrm{i}\\
   \end{bmatrix}, \quad 
   b\mapsto \begin{bmatrix} 
      0 & 1 \\
      1 & 0 \\
   \end{bmatrix}, \qquad 
   \psi_2: a\mapsto 
\begin{bmatrix} 
      \mathrm{i}& 0 \\
      0 & -\mathrm{i}\\
   \end{bmatrix}, \quad 
   b\mapsto \begin{bmatrix} 
      0 & -1 \\
      1 & 0 \\
   \end{bmatrix}.\] 
  The other irreducible representations of $G$ are $1$-dimensional, and can be labeled by 
  the elements of the subgroup $\widehat G:=\{\pm1\}\times \{\pm1,\pm\mathrm{i}\}$ of $\field^\times\times \field^\times$ as follows: 
  identify  $\chi=(\chi_1,\chi_2)\in \widehat G$ with the representation 
  $\chi:G\to \field^\times$ given by  
  \[\chi:a\mapsto \chi_1,\quad b\mapsto \chi_2.\] 
 Denote by $W_j$ the vector space $\field^2$ endowed with the  representation $\psi_j$ for $j=1,2$, and for $\chi\in \widehat G$ denote by $U_\chi$ the vector space $\field$ endowed with the  representation $\chi$. 
 Set $U:=\bigoplus_{\chi\in \widehat G}U_\chi$. 
 Note that none of $\psi_1$ or $\psi_2$ is faithful: $\ker(\psi_1)=\langle b^2\rangle$, 
 $G/\ker(\psi_1)\cong \mathrm{D}_8$, and $\psi_1$ is the lift to $G$ of the  only $2$-dimensional representation 
 of $\mathrm{D}_8$. On the other hand, $\ker(\psi_2)=\langle a^2b^2\rangle$, 
 $G/\ker(\psi_2)\cong \mathrm{Dic}_8$, and $\psi_2$ is the lift to $G$ of the only $2$-dimensional irreducible 
 representation of the quaternion group. 
The coordinate functions on $W_1$ are denoted by $x_1,x_2$, the coordinate functions 
 on $W_2$ are $y_1,y_2$, and the coordinate function on $U_\chi$ is $t_\chi$ for $\chi\in \widehat G$. 
 It is well known and easy to see that 
 \begin{align*}\field[x_1,x_2]^G&=\field[x_1x_2,x_1^4+x_2^4],
 \\ \field[y_1,y_2]^G&=\field[(y_1y_2)^2,\ y_1^4+y_2^4,\ y_1y_2(y_1^4-y_2^4)]. 
 \end{align*}
 Consider the following relative invariants (their index indicates their weight): 
 \begin{align*}r_{(-1,1)}:=x_1^2+x_2^2, \quad r_{(-1,-1)}:=x_1^2-x_2^2, \quad 
 r_{(1,-1)}:=r_{(-1,1)}r_{(-1,-1)}=x_1^4-x_2^4
 \\ 
s_{(1,-1)}:=y_1y_2,\quad s_{(-1,1)}:=y_1^2+y_2^2, \quad s_{(-1,-1)}:=y_1^2-y_2^2.
\end{align*} 
Using the above observations, one can prove by arguments 
similar to the proof of Proposition~\ref{prop:Vi+U} the following: 

  \begin{proposition}\label{prop:V1+U} 
 For $i=1,2$ we have 
 $\beta(G,W_i\oplus  U)=6$.  
 \end{proposition}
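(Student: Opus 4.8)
The plan is to mimic closely the structure of the proof of Proposition~\ref{prop:Vi+U}, handling $i=1$ and $i=2$ in parallel. Since $G=\mathrm{C}_4\rtimes\mathrm{C}_4$ has $|G|=16$ and $\beta^\field(G)=7$, while every $G$-invariant on $W_i\oplus U$ must in fact be even-graded in a suitable sense (the central element $a^2b^2$ or $b^2$ acts by scalars, so one gets an easy parity/lower-degree obstruction that rules out $\beta=7$ here and leaves only $\beta(G,W_i\oplus U)\le 6$ to confirm, plus $\ge 6$ from exhibiting a degree-$6$ indecomposable), the real content is the upper bound. First I would record a $\field G$-module direct complement in $\field[W_i]$ of the Hilbert ideal $\mathcal H(G,W_i)$, decomposed into minimal $G$-invariant subspaces, just as was done for $\mathrm{Dic}_8$ in Proposition~\ref{prop:Vi+U}; for $i=2$ this is literally the $\mathrm{Dic}_8$ computation (the $y$-variables), and for $i=1$ it is the analogous $\mathrm{D}_8$ computation with $\field[x_1,x_2]^G=\field[x_1x_2,x_1^4+x_2^4]$, whose complement is spanned by $1$, $x_1,x_2$, $r_{(-1,1)}$, $r_{(-1,-1)}$, $r_{(1,-1)}$, and the degree-$3$ pieces $\mathrm{Span}_\field\{x_1^3,x_2^3\}$, $\mathrm{Span}_\field\{x_1^2x_2,x_1x_2^2\}$ (the factor ring modulo the Hilbert ideal is the $8$-dimensional regular representation of $\mathrm{D}_8$, so this list is complete).

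Next I would invoke the standard reduction: every multihomogeneous element of $\field[W_i\oplus U]^G$ has the form $ft$ with $f\in\field[W_i]$ and $t=\prod_{\chi}t_\chi^{n_\chi}$, so $\field[W_i\oplus U]^G$ is generated over $\field[W_i]^G$ by products of elements from the finite set $\{\,r_\chi\ (\text{resp. }s_\chi),\ t_{\chi'}\mid \chi \text{ a non-trivial weight realized by the complement},\ \chi'\in\widehat G\,\}$ that have no proper $G$-invariant subproduct. Then two counting arguments bound the degree: products of three or more of the relative invariants $r_\chi$ (resp. $s_\chi$) can be rewritten modulo $\mathcal H(G,W_i)$ as combinations of shorter ones, since their pairwise products already exhaust the relevant weights; and any unordered sequence of $\mathsf D(\widehat G)=4$ or more of the $t_{\chi'}$ contains a proper product-one subsequence, because $\widehat G\cong \mathrm{C}_2\times\mathrm{C}_4$ has Davenport constant $4$ (here $\widehat G=\{\pm1\}\times\{\pm1,\pm\mathrm i\}$). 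So one only needs products with at most two factors of type $r_\chi$/$s_\chi$ and at most $3$ factors of type $t_{\chi'}$, hence at most $5$ factors in total — and I would check (this is the one place a small case inspection is unavoidable) that every such product with no proper invariant subproduct has degree $\le 6$. The worst cases are two relative invariants of degree $2$ times two $t$'s ($2+2+1+1=6$) or one degree-$4$ relative invariant ($r_{(1,-1)}$, resp. $s_{(1,-1)}(y_1^4-y_2^4)$) times two $t$'s, or a degree-$3$ piece of the complement times relevant $t$'s; all land at $\le 6$.

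The main obstacle I anticipate is precisely the bookkeeping in this last step for $i=1$: unlike the $\mathrm{Dic}_8$ case, the $\mathrm{D}_8$-part $\field[x_1,x_2]^G$ is a polynomial ring on generators of degrees $2$ and $4$ (not $4$ and $4$ plus a degree-$6$ element), so the degree-$3$ summands $\{x_1^3,x_2^3\}$ and $\{x_1^2x_2,x_1x_2^2\}$ of the complement enter, and one must verify that multiplying such a degree-$3$ relative invariant by the $t_{\chi'}$ needed to make it $G$-invariant never forces degree $7$; this works because the weights of those degree-$3$ monomials are realized using at most two of the $t_{\chi'}$ (total degree $\le 3+2=5$), but it requires matching weights carefully against the structure of $\widehat G$. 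For $i=2$ the argument is word-for-word the one already given for $\mathrm{Dic}_8\times\mathrm{C}_2$ restricted to the $W_2$-summand, so I would simply say "the same argument as in Proposition~\ref{prop:Vi+U}, replacing $\mathrm{Dic}_8$ by its lift along $G\twoheadrightarrow G/\langle a^2b^2\rangle\cong\mathrm{Dic}_8$, shows $\beta(G,W_2\oplus U)=6$," and spend the written detail only on $i=1$.
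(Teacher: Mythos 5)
Your overall strategy (reduce modulo the Hilbert ideal of $\field[W_i]$, list the relative invariants in a $G$-stable complement, and bound the length of products with no proper invariant subproduct via the Davenport constant of $\widehat G$) is exactly the argument the paper points to, since the paper gives no details here beyond ``similar to Proposition~\ref{prop:Vi+U}''. However, your key counting step contains a genuine error: $\widehat G\cong \mathrm{C}_2\times\mathrm{C}_4$ has Davenport constant $2+4-1=5$, not $4$. Your derived bound ``at most $3$ factors of type $t_{\chi'}$, hence at most $5$ factors in total'' is therefore false: for instance $t_{(1,\mathrm{i})}^4$ is an indecomposable invariant with four $t$-factors, and $t_{(-1,\mathrm{i})}t_{(1,\mathrm{i})}^3t_{(-1,1)}$ realizes an irreducible product-one sequence of length $5$. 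With the correct value $\mathsf D(\widehat G)=5$, the naive count allows an irreducible product of two degree-$2$ relative invariants and three $t$'s, i.e.\ total degree $7$, so your argument as written does not yield the upper bound $6$.

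The missing step is to rule out that configuration, and it does not follow from factor counting alone. What saves the proof is that every degree-$2$ relative invariant on $W_i$ ($x_1^2\pm x_2^2$ for $i=1$; $y_1y_2$, $y_1^2\pm y_2^2$ for $i=2$) has weight in the index-two subgroup $K:=\{\pm1\}\times\{\pm1\}$ of $\widehat G$. If a product contains two such factors with no invariant proper subproduct, their weights $\chi,\chi'$ are distinct nonzero elements of $K$, so $\{1,\chi,\chi',\chi\chi'\}=K$; irreducibility then forces every proper nonempty subproduct of the remaining $t$-factors to have weight outside $K$, i.e.\ nontrivial image in $\widehat G/K\cong\mathrm{C}_2$ --- but then the product of any two of those $t$-factors lands back in $K$, a contradiction once there are at least two of them (in fact three would be needed to reach degree $7$). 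Hence at most two $t$-factors can accompany two degree-$2$ relative invariants, and all irreducible products have degree at most $6$. Two smaller points: your parenthetical claim that the central element gives a parity obstruction ruling out degree $7$ on $W_i\oplus U$ is incorrect ($a^2$ acts trivially on $U$, and odd-degree invariants such as $(x_1^2+x_2^2)t_{(-1,1)}$ exist), so the generation argument really is needed for the upper bound; and the degree-$4$ relative invariant for $i=2$ you want is $s_{(-1,1)}s_{(-1,-1)}=y_1^4-y_2^4$ of weight $(1,-1)$, not $s_{(1,-1)}(y_1^4-y_2^4)$, which is the degree-$6$ invariant generator of $\field[W_2]^G$.
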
  
 
\begin{proposition} \label{prop:V1+V2}
We have $\beta(G,W_1\oplus W_2)\le 6$. 
\end{proposition}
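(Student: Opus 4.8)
The plan is to follow the same template that worked for $\mathrm{Dic}_8\times\mathrm{C}_2$ in Proposition~\ref{prop:Q8xC2,V1+V2}: first pin down a parity constraint that forces $\beta(G,W_1\oplus W_2)$ to be even, and then combine it with the known value $\beta^\field(G)=7$ to conclude. Here the central element $a^2\in G$ acts on $W_1$ via $\psi_1(a^2)=-\mathrm{id}$ (since $\psi_1(a)=\mathrm{diag}(\mathrm{i},-\mathrm{i})$) and on $W_2$ via $\psi_2(a^2)=-\mathrm{id}$ as well. Hence $a^2$ multiplies each of the four coordinate functions $x_1,x_2,y_1,y_2$ by $-1$, so every $G$-invariant in $\field[W_1\oplus W_2]$ involves only monomials of even total degree. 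Consequently $\beta(G,W_1\oplus W_2)$ is an even integer.

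Next I would invoke the monotonicity $\beta(G,W_1\oplus W_2)\le \beta^\field(G)=7$ (the latter value is proved in \cite[Proposition 3.2]{cziszter-domokos-szollosi}), together with the parity observation, to get $\beta(G,W_1\oplus W_2)\le 6$. This already gives the stated inequality, so strictly speaking no further work is needed for the $\le 6$ claim. If one also wanted the matching lower bound (turning the inequality into an equality, as in Proposition~\ref{prop:Q8xC2,V1+V2}), one would note $\beta(G,W_1\oplus W_2)\ge \beta(G,W_2)$, and $\beta(G,W_2)=6$ because $\field[W_2]^G=\field[(y_1y_2)^2,\ y_1^4+y_2^4,\ y_1y_2(y_1^4-y_2^4)]$ has an indecomposable generator of degree $6$ — the generator $y_1y_2(y_1^4-y_2^4)$ is not symmetric in $y_1,y_2$ and hence cannot lie in the subalgebra generated by the two lower-degree generators. (The representation $\psi_2$ factors through $\mathrm{Dic}_8$, so this is exactly the computation recalled in the proof of Proposition~\ref{prop:betasep-index2}.)

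I do not expect a genuine obstacle here: the argument is entirely parallel to Proposition~\ref{prop:Q8xC2,V1+V2}, and the only inputs are the action of the central involution $a^2$, the already-quoted value $\beta^\field(G)=7$, and the elementary structure of $\field[W_2]^G$. The mildest point of care is simply verifying that $\psi_1(a^2)$ and $\psi_2(a^2)$ are both $-\mathrm{id}$ rather than, say, a non-scalar involution — but this is immediate from the explicit matrices for $\psi_1(a)$ and $\psi_2(a)$ given above.
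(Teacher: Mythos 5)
Your argument is exactly the paper's proof: the central element $a^2$ acts as $-\mathrm{id}$ on $W_1\oplus W_2$, forcing all invariants to have even degree, and combining this parity with $\beta^\field(G)\le 7$ gives the bound $6$. The only nit is the citation: for $\mathrm{C}_4\rtimes\mathrm{C}_4$ the paper quotes \cite[Proposition 3.1]{cziszter-domokos-szollosi} (Proposition 3.2 is the one for $\mathrm{Dic}_8\times\mathrm{C}_2$), but this does not affect the mathematics.
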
 
\begin{proof} 
We know that $\beta^\field(G)\le 7$ by \cite[Proposition 3.1]{cziszter-domokos-szollosi}. 
On the other hand, the element $a^2\in G$ acts on $W_1\oplus W_2$ as 
multiplication by the scalar $-1$, hence any homogeneous element of 
$\field[W_1\oplus W_2]^G$ has even degree. It follows that $\beta(G,W_1\oplus W_2)$ 
is an even number, less than or equal to $7$. Consequently, 
$\beta(G,W_1\oplus W_2)\le 6$. 
\end{proof} 

\begin{proposition}\label{prop:V1+V2+U(1,i)} 
We have $\sepbeta(W_1\oplus W_2\oplus U_{(1,\mathrm{i})})\le 6$. 
\end{proposition}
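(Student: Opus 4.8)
The plan is to show that invariants of degree at most $6$ separate any two points $v=(w_1,w_2,u)$ and $v'=(w_1',w_2',u')$ in $W_1\oplus W_2\oplus U_{(1,\mathrm{i})}$. By \eqref{eq:multihomogeneous separating} it suffices to work with multihomogeneous invariants. First I would invoke Proposition~\ref{prop:V1+V2} to reduce: if all invariants of degree at most $6$ agree on $v$ and $v'$, then in particular the invariants depending only on $x_1,x_2,y_1,y_2$ do, so by $\beta(G,W_1\oplus W_2)\le 6$ we may replace $v'$ by an element of its $G$-orbit and assume $w_1=w_1'$ and $w_2=w_2'$. It then remains to separate the last coordinate, i.e.\ to deduce $u=u'$. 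The coordinate $t:=t_{(1,\mathrm{i})}$ has weight $\chi=(1,\mathrm{i})$, which has order $4$, so the pure powers $t^4$ are invariant but only pin down $u^4=u'^4$; hence we need a relative invariant $f\in\field[W_1\oplus W_2]^{G,\chi^{-1}}$ of degree at most $5$ with $f(w_1,w_2)\neq 0$, since then $ft$ is a $G$-invariant of degree at most $6$ forcing $u=u'$, and similarly $ft^3$ of degree at most $6$ would handle the case where the relative invariant has degree at most $3$.

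The core of the argument is therefore a relative-invariant construction: for $\chi^{-1}=(1,-\mathrm{i})$ I must exhibit a low-degree relative invariant of that weight in $\field[W_1\oplus W_2]$ which does not vanish at $(w_1,w_2)$. I would first handle the generic case $w_1\neq 0$, $w_2\neq 0$ by constructing explicit relative invariants of weight $(1,\mathrm{i})$ and $(1,-\mathrm{i})$ of degree $\le 5$ mixing the $x$'s and $y$'s (analogous to the table in Proposition~\ref{prop:ralative invariants Q8xC2}), verifying that their common zero locus meets $\{w_1\neq 0, w_2\neq 0\}$ trivially; a natural candidate is something like $x_1 y_1 + \mathrm{i}\, x_2 y_2$ or $x_1^3 y_1 \pm x_2^3 y_2$ type expressions, whose weights one computes directly from the matrices $\psi_1,\psi_2$. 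Then I would separately dispose of the degenerate cases $w_1=0$ or $w_2=0$: if say $w_2=0$, then $v,v'$ lie in $W_1\oplus U_{(1,\mathrm{i})}\subseteq W_1\oplus U$, and Proposition~\ref{prop:V1+U} (with $i=1$) gives $\beta(G,W_1\oplus U)=6$, hence $\sepbeta(G,W_1\oplus U)\le 6$, so $v$ and $v'$ already have the same orbit; symmetrically for $w_1=0$ using $i=2$.

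The main obstacle I expect is the relative-invariant construction in the generic case: unlike the weight $(1,1)$ situation, here the weight has order $4$, so the relevant relative invariants are genuinely degree $\ge 4$ or $5$, and one must check both that such a relative invariant of weight $(1,-\mathrm{i})$ exists at all of degree $\le 5$ and that the finitely many such relative invariants have no common zero outside the coordinate subspaces $W_1\oplus\{0\}$ and $\{0\}\oplus W_2$. Establishing the latter is a small elimination computation: one shows the ideal generated by the chosen relative invariants, intersected with the open condition $w_1\neq 0\neq w_2$, is empty, most cleanly by using the $G$-action (the orbit of a point with both components nonzero spans $W_1\oplus W_2$, cf.\ the remark in the proof of Proposition~\ref{prop:ralative invariants Q8xC2}) together with Lemma~\ref{lemma:auto}. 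Once this is in hand, the assembly into the bound $\sepbeta(G,W_1\oplus W_2\oplus U_{(1,\mathrm{i})})\le 6$ is immediate from the reductions above.
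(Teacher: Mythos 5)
Your overall architecture matches the paper's exactly: reduce to $w_1=w_1'$, $w_2=w_2'$ via Proposition~\ref{prop:V1+V2}, dispose of the cases $w_1=0$ or $w_2=0$ via Proposition~\ref{prop:V1+U}, and then in the generic case pin down $u=u'$ using products of low-degree relative invariants with powers of $t:=t_{(1,\mathrm{i})}$. The problem is that the step you yourself identify as ``the main obstacle'' --- exhibiting the relative invariants of weights $(1,\pm\mathrm{i})$ and showing they have no common zero on $\{w_1\neq 0,\ w_2\neq 0\}$ --- is precisely the content of the proposition, and it is left undone. Worse, the candidates you sketch are not correct: $x_1y_1+\mathrm{i}\,x_2y_2$ has weight $(-1,\cdot)$ (the generator $a$ acts on it by $-1$), and $x_1^3y_1\pm x_2^3y_2$ is not a relative invariant at all for the coefficients $\pm 1$ (one needs coefficient $\pm\mathrm{i}$, or the variant $x_1y_1^3-\mathrm{i}\,x_2y_2^3$ used in the paper). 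The paper's actual argument uses the three relative invariants $x_1y_2+\mathrm{i}x_2y_1$, $x_1y_1^3-\mathrm{i}x_2y_2^3$ (weight $(1,-\mathrm{i})$) and $x_1y_2-\mathrm{i}x_2y_1$ (weight $(1,\mathrm{i})$), together with $t^4$: if $u\neq u'$ then all three must vanish at $v$, the two degree-$2$ ones force $x_1y_2=x_2y_1=0$, and the degree-$4$ one then forces $w_1=0$ or $w_2=0$, a contradiction. Without this (or an equivalent) explicit computation your proposal is a plan, not a proof.

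A second, smaller issue: you suggest that the common-zero verification could be done ``most cleanly'' via Lemma~\ref{lemma:auto}, as in Proposition~\ref{prop:ralative invariants Q8xC2}. That lemma only transfers a non-vanishing statement from one weight to another weight in the same orbit under $\mathrm{Aut}(G)$; it cannot replace the hands-on check for the orbit representative itself. Here $(1,\mathrm{i})$ \emph{is} the representative being treated (the automorphism argument is deployed later, in the proof of Theorem~\ref{thm:sepbeta(C4rtimesC4)}, to cover the remaining characters), so the elimination computation cannot be outsourced to the automorphism group. Also note that no single relative invariant of weight $(1,-\mathrm{i})$ and degree $\le 5$ is nonvanishing on all of $\{w_1\neq 0\neq w_2\}$ in any obvious way; the paper instead combines $u^4=u'^4$ with the simultaneous vanishing of a small system of relative invariants of both weights $(1,\mathrm{i})$ and $(1,-\mathrm{i})$, which is a slightly different (and necessary) mechanism from the one-invariant argument you describe.
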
 

\begin{proof} 
Let $v=(w_1,w_2,u)\in V$ and $v'=(w'_1,w'_2,u')\in V$ 
such that $f(v)=f(v')$ for all $f\in \field[V]^G$ with $\deg(f)\le 6$ 
(here $w_i,\ w'_i\in W_i$ and $u,u'\in U$). We need to show that $v$ and $v'$ 
have the same $G$-orbit. By Proposition~\ref{prop:V1+V2}, $(w_1,w_2)$ and $(w'_1,w'_2)$ are in the same $G$-orbit. Replacing $v$ by an appropriate element in its $G$-orbit, we may assume 
that $w_1=w'_1$ and $w_2=w'_2$. If $w_1=0$ or $w_2=0$, then we have $G\cdot v=G\cdot v'$  by Proposition~\ref{prop:V1+U}. 
From now on we assume that none of $w_1$ or $w_2$ is zero. 
Set 
\[r^{(1)}_{(1,-\mathrm{i})}:=x_1y_2+\mathrm{i}x_2y_1, \quad 
r^{(2)}_{(1,-\mathrm{i})}:=x_1y_1^3-\mathrm{i}x_2y_2^3, \quad 
r_{(1,\mathrm{i})}:=x_1y_2-\mathrm{i}x_2y_1.\] 
Then $r^{(1)}_{(1,-\mathrm{i})}$, $r^{(2)}_{(1,-\mathrm{i})}$, $t^3$ are relative invariants of weight 
$(1,-\mathrm{i})\in \widehat G$, whereas $r_{(1,\mathrm{i})}$, $t$ are relative invariants of weight 
$(1,\mathrm{i})$. 
Therefore we have the invariants 
\[f_1:=t^4,\quad f_2:=r^{(1)}_{(1,-\mathrm{i})}t,\quad f_3:=r^{(2)}_{(1,-\mathrm{i})}t,\quad 
f_4:=r_{(1,\mathrm{i})}t^3.\] 
All of the above invariants have degree at most $5$. 
By assumption, $f_j(v)=f_j(v')$ for $j=1,2,3,4$. 
The equality $t^4(v)=t^4(v')$ implies that $u'\in \{u,-u,\mathrm{i}u,-\mathrm{i}u\}$. 
In particular, if $u=0$, then $u'=0$, and hence $v=v'$. 
It remains to deal with the case $u\neq 0$. 
Moreover, $u=u'$ if and only if $t(v)=t(v')$ if and only if $t^3(v)=t^3(v')$. 
Suppose for contradiction that $u\neq u'$. Then $f_j(v)=f_j(v')$ for $j=2,3,4$ and 
$(w_1,w_2)=(w'_1,w'_2)$ imply  
that 
\[0=r^{(1)}_{(1,-\mathrm{i})}(v)=r^{(2)}_{(1,-\mathrm{i})}(v)=r_{(1,\mathrm{i})}(v).\]
 From $0=r^{(1)}_{(1,-\mathrm{i})}(v)=r_{(1,\mathrm{i})}(v)$ we conclude 
 $0=x_1(v)y_2(v)=x_2(v)y_1(v)$. Taking into account that $w_1\neq 0$ and $w_2\neq 0$, 
 we conclude that $0=x_1(v)=y_1(v)$ or $0=x_2(v)=y_2(v)$. 
 Then $r^{(2)}_{(1,-\mathrm{i})}(v)=0$ yields  that both $x_1(v)y_1(v)$ and $x_2(v)y_2(v)$ are zero. 
 We deduce $w_1=0$ or $w_2=0$, a contradiction. 
\end{proof} 

\begin{proposition} \label{prop:V1+V2+U(-1,1)} 
We have $\sepbeta(G,W_1\oplus W_2\oplus U_{(-1,1)})\le 6$ 
and $\sepbeta(G,W_1\oplus W_2\oplus U_{(1,-1)})\le 6$. 
\end{proposition}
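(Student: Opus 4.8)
\emph{Plan.} The plan is to imitate the proof of Proposition~\ref{prop:V1+V2+U(1,i)}, the only structural difference being that the one-dimensional summand now carries a character $\chi$ of order $2$ (namely $\chi=(-1,1)$ in the first assertion and $\chi=(1,-1)$ in the second), so that $t^2$, rather than $t^4$, is the basic invariant depending only on $t$. Let $v=(w_1,w_2,u)$ and $v'=(w'_1,w'_2,u')$ be points of $W_1\oplus W_2\oplus U_\chi$ on which every invariant of degree at most $6$ takes equal values; the goal is to show $G\cdot v=G\cdot v'$. First I would invoke Proposition~\ref{prop:V1+V2} to conclude that $(w_1,w_2)$ and $(w'_1,w'_2)$ lie in the same $G$-orbit, and after replacing $v$ by a suitable element of its orbit assume $w_1=w'_1$ and $w_2=w'_2$. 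If $w_1=0$ or $w_2=0$, then $v$ and $v'$ both lie in the $G$-submodule $W_2\oplus U_\chi$ (respectively $W_1\oplus U_\chi$), which is a direct summand of $W_2\oplus U$ (respectively $W_1\oplus U$), so Proposition~\ref{prop:V1+U} yields $G\cdot v=G\cdot v'$. Thus it remains to treat the case $w_1\neq 0$, $w_2\neq 0$.

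In this case the invariant $t^2$ forces $(u')^2=u^2$, hence $u'=\pm u$; if $u=0$ or $u'=u$ then $v=v'$ and we are done, so suppose $u'=-u\neq 0$. The crux is to produce, inside $\field[W_1\oplus W_2]$, a short list of relative invariants of weight $\chi$ (note $\chi^{-1}=\chi$), each of degree at most $5$, whose common zero locus in $W_1\oplus W_2$ is contained in $(\{0\}\oplus W_2)\cup(W_1\oplus\{0\})$. For $\chi=(-1,1)$ I would take $r_{(-1,1)}=x_1^2+x_2^2$, $s_{(-1,1)}=y_1^2+y_2^2$ and $r_{(-1,-1)}s_{(1,-1)}=(x_1^2-x_2^2)y_1y_2$ (of degrees $2,2,4$); for $\chi=(1,-1)$ I would take $s_{(1,-1)}=y_1y_2$, $r_{(-1,-1)}s_{(-1,1)}=(x_1^2-x_2^2)(y_1^2+y_2^2)$ and $r_{(-1,1)}s_{(-1,-1)}=(x_1^2+x_2^2)(y_1^2-y_2^2)$ (of degrees $2,4,4$). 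Their weights are read off from the already recorded weights of the $r$'s and $s$'s together with multiplicativity of weights under products, and a one-line elimination (using $\mathrm{char}(\field)\neq 2$) shows that simultaneous vanishing of the three polynomials forces $w_1=0$ or $w_2=0$.

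Granting this, for $w_1\neq 0$, $w_2\neq 0$ I would choose from the relevant list a relative invariant $f$ with $f(w_1,w_2)\neq 0$. Then $ft$ is a $G$-invariant of degree at most $6$, yet $(ft)(v)=f(w_1,w_2)u$ while $(ft)(v')=f(w'_1,w'_2)u'=-f(w_1,w_2)u$; since these must agree and $2f(w_1,w_2)u\neq 0$, this is a contradiction. Hence $u'=u$, so $v=v'$, and in particular $G\cdot v=G\cdot v'$. I expect the only non-formal step to be verifying the common-zero-locus claim for the two triples of relative invariants, but this reduces to the elementary eliminations sketched above; note also that it is precisely the non-vanishing of some weight-$\chi$ relative invariant at a point $(w_1,w_2)$ with $w_1,w_2\neq 0$ that rules out a stabiliser element of $(w_1,w_2)$ acting by $-1$ on $U_\chi$, so no separate discussion of stabilisers is needed.
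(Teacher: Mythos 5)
Your proposal is correct and follows essentially the same route as the paper's proof: reduce via Proposition~\ref{prop:V1+V2} and Proposition~\ref{prop:V1+U} to the case $w_1=w_1'\neq 0$, $w_2=w_2'\neq 0$, and then use exactly the same triples of weight-$(-1,1)$ and weight-$(1,-1)$ relative invariants (of degree at most $4$) to produce an invariant $ft$ of degree at most $5$ that pins down $u=u'$. The only cosmetic difference is your preliminary use of $t^2$ to get $u'=\pm u$, which the paper skips since $f(w_1,w_2)u=f(w_1,w_2)u'$ with $f(w_1,w_2)\neq 0$ already gives $u=u'$ directly.
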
 

\begin{proof} 
First we deal with $V:=W_1\oplus W_2\oplus U_{(-1,1)}$. 
Let $v=(w_1,w_2,u)\in V$ and $v'=(w'_1,w'_2,u')\in V$ 
such that $f(v)=f(v')$ for all $f\in \field[V]^G$ with $\deg(f)\le 6$. We need to show that 
$v$ and $v'$ have the same $G$-orbit.
In the same way as in the proof of Proposition~\ref{prop:V1+V2+U(1,i)}, 
we may assume that $w_1=w'_1$, $w_2=w'_2$ and none of $w_1$ or $w_2$ is zero. 
It follows that 
\[r_{(-1,1)}=x_1^2+x_2^2,\quad 
r_{(-1,-1)}s_{(1,-1)}=(x_1^2-x_2^2)y_1y_2, 
\quad s_{(-1,1)}=y_1^2+y_2^2\]
can not all vanish at $v$. Suppose that $f$ is one of the above polynomials, 
with $f(v)\neq 0$. 
Since $f$ is a relative invariant of weight $(-1,1)$ of degree at most $4$, 
we get that $ft\in \field[V]^G$ has degree at most $5$. Moreover, the equality 
$(ft)(v)=(ft)(v')$ implies $u=u'$, hence $v=v'$. 

The proof for $V=W_1\oplus W_2\oplus U_{(1,-1)}$ is similar, the only difference is that we use the relative invariants 
\[s_{(1,-1)}=y_1y_2, \quad 
s_{(-1,1)}r_{(-1,-1)}=(y_1^2+y_2^2)(x_1^2-x_2^2), 
\quad s_{(-1,-1)}r_{(-1,1)}=(y_1^2-y_2^2)(x_1^2+x_2^2).\] 
\end{proof}

\begin{proposition}\label{prop:mu(C4rtimesC4)} 
We have $\mu(G)\le 2$, and  hence $\kappa(G)\le 3$. 
\end{proposition}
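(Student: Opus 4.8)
The plan is to invoke the inequality \eqref{eq:kappa-mu}, so that it suffices to prove $\mu(G)\le 2$; equivalently, that $G=\langle a,b\mid a^4=b^4=1,\ bab^{-1}=a^3\rangle$ admits no intersection independent family of three subgroups. For this I would combine Lemma~\ref{lemma:prime power} with an explicit (and elementary) description of the subgroup lattice of $G$, which I would record first. The facts needed are: (a) the centre $Z:=Z(G)=\langle a^2,b^2\rangle$ is a Klein four group, and is in fact the \emph{only} subgroup of $G$ isomorphic to $\mathrm{C}_2\times\mathrm{C}_2$, because $1,a^2,b^2,a^2b^2$ are the only elements of $G$ of order at most $2$; (b) $G$ has exactly three maximal subgroups, $H_1=\langle a^2,b\rangle$, $H_2=\langle a,b^2\rangle$, $H_3=\langle a^2,ab\rangle$, each of index $2$, each containing $Z$, and with $H_i\cap H_j=Z$ for $i\ne j$ (this last point because $H_i,H_j$ distinct index-two subgroups force $H_iH_j=G$, hence $|H_i\cap H_j|=4$, while $Z\subseteq H_i\cap H_j$); (c) every other proper nontrivial subgroup of $G$ is cyclic, of order $2$ or $4$ — here one uses that $G$ has exponent $4$, so there is no cyclic subgroup of order $8$, and that every subgroup of order $8$ is maximal.

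Granting this, the argument is short. Suppose for contradiction that $S$ is an intersection independent set of subgroups of $G$ with $|S|=3$. No member of $S$ can contain another member, since then it would contain the intersection of all the remaining members, contradicting independence; in particular $G\notin S$ and $\{1\}\notin S$. By Lemma~\ref{lemma:prime power}, since $|S|>2$, no member of $S$ is cyclic of prime power order; as $G$ is a $2$-group, this means no member of $S$ is a nontrivial cyclic subgroup. By (a)--(c) it follows that $S$ is a three-element subset of $\{Z,H_1,H_2,H_3\}$. If $S=\{H_1,H_2,H_3\}$, then $H_2\cap H_3=Z\subseteq H_1$, so $H_1$ contains the intersection of the other two members of $S$, a contradiction. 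If instead $Z\in S$, say $S=\{Z,H_i,H_j\}$ with $i\ne j$, then $H_i\cap H_j=Z$ is contained in the member $Z$ of $S$, again a contradiction. Hence no such $S$ exists, so $\mu(G)\le 2$, and therefore $\kappa(G)\le\mu(G)+1\le 3$ by \eqref{eq:kappa-mu}.

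I do not expect any serious obstacle here; the only thing requiring care is the bookkeeping in step (c), namely confirming that the four subgroups $Z,H_1,H_2,H_3$ really exhaust the non-cyclic proper subgroups of $G$. This is immediate once one observes that the unique Klein four subgroup is $Z$ and that every subgroup of order $8$ is maximal — hence one of $H_1,H_2,H_3$ — and, having exponent $4$, is non-cyclic; the remaining proper nontrivial subgroups, of orders $2$ and $4$, are then cyclic by default.
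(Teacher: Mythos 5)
Your argument is correct and follows essentially the same route as the paper: exclude cyclic subgroups of prime power order via Lemma~\ref{lemma:prime power}, observe that every remaining proper subgroup contains $\langle a^2,b^2\rangle$, and conclude by a short case analysis. You merely supply more explicit detail (the full subgroup bookkeeping and the observation $H_i\cap H_j=Z$) than the paper's terser version.
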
 
\begin{proof} 
Let $S$ be an intersection independent set of subgroups of $G$ with $|S|=\mu(G)$. 
Clearly $G\notin S$. If $S$ contains a cyclic subgroup of prime power order, 
then $|S|\le 2$ by Lemma~\ref{lemma:prime power}. From now on assume that 
$S$ contains no cyclic subgroup of prime power order. 
All such proper subgroups of $G$ contain 
the subgroup $\langle a^2,b^2\rangle$. If $\langle a^2,b^2\rangle\in S$, then $|S|=1$. 
Otherwise $S$ consists of two order $8$ subgroups of $G$. In either case, $|S|\le 2$. 
The inequality $\mu(G)\le 2$ is proved, and hence 
$\kappa(G)\le 1+\mu(G)\le 3$ holds by \eqref{eq:kappa-mu}. 
\end{proof}

\begin{theorem}\label{thm:sepbeta(C4rtimesC4)} 
Assume that $\field$ has an element of multiplicative order $8$. 
Then we have $\sepbeta(\mathrm{C}_4\rtimes \mathrm{C}_4)=6$. 
\end{theorem}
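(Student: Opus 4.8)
The plan is to establish the two inequalities $\sepbeta^\field(G)\ge 6$ and $\sepbeta^\field(G)\le 6$ separately. For the lower bound, recall from the description of $\psi_2$ that $\ker(\psi_2)=\langle a^2b^2\rangle$ is a normal subgroup of $G$ with $G/\ker(\psi_2)\cong\mathrm{Dic}_8$. Since $\field$ contains an element of multiplicative order $8=|\mathrm{Dic}_8|$, Theorem~\ref{thm:sepbeta index two} gives $\sepbeta^\field(\mathrm{Dic}_8)=6$, and therefore $\sepbeta^\field(G)\ge\sepbeta^\field(G/\ker(\psi_2))=6$ by \eqref{eq:sepbeta(G/N)}.

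For the upper bound I would first reduce to an algebraically closed base field: by Lemma~\ref{lemma:spanning invariants} (iii) we have $\sepbeta^\field(G)\le\sepbeta^{\overline\field}(G)$, and $\overline\field$ still contains an element of order $8$ while being infinite, so Lemma~\ref{lemma:multfree} applies and it suffices to bound $\sepbeta(G,V)$ for $V=W_1\oplus W_2\oplus U$, the sum of one copy of each simple $\field G$-module. The key simplification is the Helly-dimension reduction: by Proposition~\ref{prop:mu(C4rtimesC4)} we have $\kappa(G)\le 3$, hence $\kappa(G,V)\le 3$, so by Lemma~\ref{lemma:helly} it is enough to prove $\sepbeta(G,V')\le 6$ for every direct summand $V'$ of $V$ that is a sum of at most three simple summands.

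It then remains to run through these subsums. If $V'$ does not contain both $W_1$ and $W_2$, then $V'$ is a direct summand of $W_1\oplus U$, of $W_2\oplus U$, or of $U$; since $\beta(G,W_i\oplus U)=6$ by Proposition~\ref{prop:V1+U}, while $\beta(G,U)=\mathsf{D}(\widehat G)=\mathsf{D}(\mathrm{C}_2\times\mathrm{C}_4)=5$ because $G$ acts on $U$ through its abelianization $G/\langle a^2\rangle\cong\mathrm{C}_2\times\mathrm{C}_4$ and $\field$ has an element of order $4$, in all these cases $\sepbeta(G,V')\le\beta(G,V')\le 6$. The remaining subsums are $W_1\oplus W_2$ (here $\beta(G,W_1\oplus W_2)\le 6$ by Proposition~\ref{prop:V1+V2}) and $W_1\oplus W_2\oplus U_\chi$ with $\chi\in\widehat G$. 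For $\chi$ trivial the latter reduces to $\beta(G,W_1\oplus W_2)\le 6$ together with the observation that the trivial summand is separated by the single coordinate function $t_{(1,1)}$. For $\chi\in\{(1,\mathrm{i}),(-1,1),(1,-1)\}$ the bound $\sepbeta(G,W_1\oplus W_2\oplus U_\chi)\le 6$ is exactly Propositions~\ref{prop:V1+V2+U(1,i)} and \ref{prop:V1+V2+U(-1,1)}. For the remaining non-trivial $\chi$ I would invoke Lemma~\ref{lemma:auto}: the maps $a\mapsto a,\ b\mapsto ab$; $a\mapsto a,\ b\mapsto b^{-1}$; $a\mapsto ab^2,\ b\mapsto b$ are automorphisms of $G$, acting on $\widehat G$ by $(\chi_1,\chi_2)\mapsto(\chi_1,\chi_1\chi_2)$, $(\chi_1,\chi_2)\mapsto(\chi_1,\chi_2^{-1})$ and $(\chi_1,\chi_2)\mapsto(\chi_1\chi_2^2,\chi_2)$ respectively, so every non-trivial $\chi\in\widehat G$ lies in the $\mathrm{Aut}(G)$-orbit of one of $(1,\mathrm{i})$, $(-1,1)$, $(1,-1)$. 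Moreover $\langle a^2\rangle=[G,G]$ is characteristic, and $G/\langle b^2\rangle\cong\mathrm{D}_8\not\cong\mathrm{Dic}_8\cong G/\langle a^2b^2\rangle$, so no automorphism of $G$ interchanges $\ker(\psi_1)$ and $\ker(\psi_2)$; hence $\psi_i\circ\alpha\cong\psi_i$ for every $\alpha\in\mathrm{Aut}(G)$, and Lemma~\ref{lemma:auto} (ii) transports the bound from the orbit representative to $W_1\oplus W_2\oplus U_\chi$. Collecting all cases gives $\sepbeta(G,V)\le 6$, hence $\sepbeta^\field(G)\le 6$, which combined with the lower bound proves $\sepbeta^\field(G)=6$.

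The genuinely delicate part — the analysis of stabilizers, of the common zero loci of the relevant relative invariants, and the explicit constructions of separating invariants — has already been carried out in Propositions~\ref{prop:V1+U}, \ref{prop:V1+V2}, \ref{prop:V1+V2+U(1,i)} and \ref{prop:V1+V2+U(-1,1)}; so the main thing to get right here is the organisational layer: verifying that the Helly-dimension bound $\kappa(G)\le 3$ legitimately reduces matters to subsums of at most three simple modules, and checking the $\mathrm{Aut}(G)$-orbit bookkeeping that shows those four propositions already cover, up to isomorphism, every subsum in which both two-dimensional irreducibles occur.
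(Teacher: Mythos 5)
Your proposal is correct and follows essentially the same route as the paper: the lower bound via the quotient $G/\langle a^2b^2\rangle\cong\mathrm{Dic}_8$, and the upper bound via Lemma~\ref{lemma:spanning invariants}, Lemma~\ref{lemma:multfree}, the Helly-dimension reduction of Lemma~\ref{lemma:helly} with $\kappa(G)\le 3$, Propositions~\ref{prop:V1+U}--\ref{prop:V1+V2+U(-1,1)}, and the $\mathrm{Aut}(G)$-orbit argument of Lemma~\ref{lemma:auto}. Your only addition is to make explicit the automorphisms realizing the orbit decomposition of $\widehat G$ and the verification that $\psi_i\circ\alpha\cong\psi_i$, details the paper leaves implicit; these check out.
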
 

\begin{proof} 
The factor group of $G$ modulo its central normal subgroup $\langle a^2b^2\rangle$ 
is isomorphic to the quaternion group $\mathrm{Dic}_8$ of order $8$. Therefore $\sepbeta^\field(G)\ge \sepbeta^\field(\mathrm{Dic}_8)=6$ 
(in fact we have $\sepbeta(G,W_2)=6$, as we saw in Section~\ref{sec:indextwo}). 

To prove the reverse inequality, by Lemma~\ref{lemma:spanning invariants} we may assume that $\field$ is large enough so that 
Lemma~\ref{lemma:multfree} applies and we can reduce to the study of multiplicity-free representations. 
By Proposition~\ref{prop:mu(C4rtimesC4)} we have $\kappa(G)\le 3$, and therefore 
by Lemma~\ref{lemma:helly} it is sufficient to show that $\sepbeta(V)\le 6$ if $V$ is the direct sum of three pairwise non-isomorphic $\field G$-modules. Assume that $V$ is such a $\field G$-module. 
By Proposition~\ref{prop:V1+V2},  Proposition~\ref{prop:V1+U} we know that $\sepbeta(G,V)\le \beta(G,V)\le 6$, unless 
$V=W_1\oplus W_2\oplus U_{\chi}$ for some non-trivial character
$\chi \in \widehat G$. The cases when $\chi \in \{(1,\mathrm{i}),\ (-1,1),\ (1,-1)\}$ are covered by Proposition~\ref{prop:V1+V2+U(1,i)}, Proposition~\ref{prop:V1+V2+U(-1,1)}. 
 The remaining representations are of the form $\rho\circ \alpha$, where $\rho$ is one of the above three representations, and $\alpha$ is an automorphism of $G$. So the proof is complete by Lemma~\ref{lemma:auto}.  
\end{proof} 

\section{Comments on computational aspects}\label{sec:algorithm}

It is not possible to obtain the results of this paper on small groups  
by computer for two reasons. They are valid in infinitely many different characteristics 
and there are no known general principles for relating the separating 
Noether numbers of a group over fields of different characteristic 
(unlike for the Noether number, for which some interesting results in this direction 
can be found in \cite[Theorem 4.7]{knop}). 
Moreover, our results are proved also 
over fields that are not algebraically closed. 
While by Lemma~\ref{lemma:spanning invariants} (iii) one can 
get an upper bound on the separating Noether number 
by passing to the algebraic closure of the base field, 
there seems to be no algorithm for finding the exact value of the separating Noether number 
over an infinite algebraically non-closed base field 
(in other words, our arguments giving lower bound for separating Noether numbers 
can not be replaced by computer calculations). 
Note that for algebraically closed base fields and not necessarily finite 
reductive groups, an algorithm to compute a separating set of invariants is given in 
\cite{kemper:algorithm} (without a prior computation of a generating set of invariants). 

However, one can in principle verify by computer for chosen base fields that our 
claimed value is at least an upper bound for the separating Noether number 
as follows. We can pass to the algebraic closure of the base field, and 
by Lemma~\ref{lemma:spanning invariants} (iii) the separating Noether number 
for the algebraic closure is an upper bound for the separating Noether number 
over the original base field. 
There are computer algebra packages that can compute 
minimal systems of generators for rings of invariants of finite groups. 
Using a minimal homogeneous generating system of $\field[V]^G$ as input, 
assuming that $\field$ is algebraically closed, 
a simple algorithm computes $\sepbeta(G,V)$, based on the 
following statement. 

\begin{proposition}\label{prop:nullstellensatz} 
Let $V$ be a $\field G$-module, where $\field$ is algebraically closed. 
Let $\mathcal{A}$ be a minimal homogeneous generating system 
of $\field[V]^G=\field[x_1,\dots,x_n]^G$. 
Take additional variables $y_1,\dots,y_n$, and denote by 
$\delta:\field[x_1,\dots,x_n]\to \field[x_1,\dots,x_n,y_1,\dots,y_n]$
the map $f\mapsto f(x_1,\dots,x_n)-f(y_1,\dots,y_n)$. 
For a positive integer $d$ we have that 
$\sepbeta(G,V)\le d$ if and only if for all $f\in \mathcal{A}$ 
with $\deg(f)>d$ we have that $\delta(f)$ is contained in the radical of 
the ideal $(\delta(h)\mid h\in \mathcal{A},\ \deg(h)\le d)$. 
Consequently,
\[\sepbeta(G,V)=\min\{d\mid \forall f
\in \mathcal{A}\colon 
\delta(f) \in \sqrt{(\delta(h)\mid h\in \mathcal{A}, \ \deg(h)\le d)}\}.\]
\end{proposition}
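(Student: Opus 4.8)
The plan is to recast the inequality $\sepbeta(G,V)\le d$ as an inclusion of affine varieties in $V\oplus V$ and then apply Hilbert's Nullstellensatz; this is the only place where the hypothesis that $\field$ is algebraically closed is used. Write $\mathcal{A}_{\le d}:=\{h\in\mathcal{A}\mid\deg(h)\le d\}$, a finite set. First I would record the elementary fact that a subset $T\subseteq\field[V]^G$ and the $\field$-subalgebra it generates identify the same pairs of points of $V$: if $h(v)=h(v')$ for all $h\in T$, then the same holds for every polynomial expression in the elements of $T$. Now every homogeneous invariant of degree $\le d$ is a $\field$-linear combination of monomials in the elements of $\mathcal{A}$, and each monomial of total degree $\le d$ involves only factors lying in $\mathcal{A}_{\le d}$; hence $\field[V]^G_{\le d}$ lies in the subalgebra generated by $\mathcal{A}_{\le d}$, while trivially $\mathcal{A}_{\le d}\subseteq\field[V]^G_{\le d}$. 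Therefore $\field[V]^G_{\le d}$ is a separating set if and only if $\mathcal{A}_{\le d}$ is one, and since the property of being a separating set is preserved under enlarging the set, this shows that $\sepbeta(G,V)\le d$ if and only if $\mathcal{A}_{\le d}$ separates the $G$-orbits.

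Next I would work in the coordinate ring $\field[x_1,\dots,x_n,y_1,\dots,y_n]$ of $V\oplus V$ and introduce the ideals $I_{\le d}:=(\delta(h)\mid h\in\mathcal{A}_{\le d})$ and $I:=(\delta(f)\mid f\in\mathcal{A})$, with common zero sets $Z_{\le d}$ and $Z$ in $V\oplus V$; clearly $Z\subseteq Z_{\le d}$. By the fact recalled before Definition~\ref{def:separating set}, $\field[V]^G$, hence $\mathcal{A}$, separates the $G$-orbits, so $(v,v')\in Z$ if and only if $G\cdot v=G\cdot v'$; consequently $\mathcal{A}_{\le d}$ is a separating set precisely when $Z_{\le d}=Z$, that is (as $Z\subseteq Z_{\le d}$ always) precisely when $Z_{\le d}\subseteq Z$. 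This last inclusion says exactly that $\delta(f)$ vanishes on $\mathcal{V}(I_{\le d})$ for every $f\in\mathcal{A}$, which by the Nullstellensatz over the algebraically closed field $\field$ is equivalent to $\delta(f)\in\sqrt{I_{\le d}}$ for every $f\in\mathcal{A}$. For $f\in\mathcal{A}$ with $\deg(f)\le d$ this is automatic since then $\delta(f)\in I_{\le d}$, so only the $f\in\mathcal{A}$ with $\deg(f)>d$ impose a genuine condition; this is the asserted equivalence. The displayed formula for $\sepbeta(G,V)$ follows at once, the minimum being attained since the condition becomes vacuous once $d\ge\max_{f\in\mathcal{A}}\deg(f)$.

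I do not expect a serious obstacle: the argument combines the principle that a set of invariants and the algebra it generates have the same separating power with one application of the Nullstellensatz. The two points that deserve a little care are the reduction from $\field[V]^G_{\le d}$ to the finite set $\mathcal{A}_{\le d}$ (which rests on the monomial degree count above together with monotonicity of the separating property) and the essential use of $\field=\overline{\field}$: over a field that is not algebraically closed $\mathcal{V}(I_{\le d})$ need not detect the radical of $I_{\le d}$, which is consistent with the genuine field-dependence of the separating Noether number discussed in Section~\ref{sec:dependence on base field}.
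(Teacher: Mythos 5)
Your proof is correct and is precisely the argument the paper leaves implicit: its own proof consists of the single remark that the statement is ``a straightforward consequence of Definition~\ref{def:separating set}, the definition of $\sepbeta(G,V)$, and Hilbert's Nullstellensatz.'' Your write-up supplies exactly the intended details --- the reduction from $\field[V]^G_{\le d}$ to the finite set $\mathcal{A}_{\le d}$ via the degree count on monomials in the generators, the identification of the separating condition with the inclusion of zero sets in $V\oplus V$, and the application of the Nullstellensatz --- so there is nothing to add.
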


\begin{proof}
This is a straightforward consequence of
Definition~\ref{def:separating set}, the definition of $\sepbeta(G,V)$, 
and Hilbert's Nullstellensatz. 
\end{proof}

We have implemented in SageMath an algorithm based on Proposition~\ref{prop:nullstellensatz}. 
In the case $\field=\mathbb{C}$, we verified the values of $\sepbeta^\field(G)$ for the non-abelian groups given in Theorem~\ref{thm:sepbeta(<32)} 
using the online CoCalc platform \cite{CoCalc} and some good apriori upper bounds on 
the Helly dimension $\kappa(G)$ or $\kappa(G,V)$ (cf. Section~\ref{sec:helly}). 



\section*{Declarations} 

\noindent{\bf{Ethical Approval:}} not applicable

\noindent{\bf{Funding:}} This research was supported by the Hungarian National Research, Development and Innovation Office,  NKFIH K 138828.

\noindent{\bf{Availability of data and materials:}} not applicable


\end{document}